\newcommand{\1}[1]{{\mathbf 1}{\{#1\}}}
\newcommand{\eps}{\varepsilon}
\newcommand{\Z}{{\mathbb Z}}
\newcommand{\B}{{\mathsf B}}
\newcommand{\Psf}{{\mathsf P}}
\newcommand{\R}{{\mathbb R}}
\let\phi=\varphi
\newcommand{\E}{{\mathbb E}}
\newcommand{\8}{{\infty}}
\newcommand{\nn}{{\nonumber}}
\newcommand{\eqlaw}{\stackrel{\text{\tiny law}}{=}}
\newcommand{\IP}{{\mathbb P}}
\DeclareMathSymbol{\widehatsym}{\mathord}{largesymbols}{"62}
\newcommand{\hW}{\widehat{W}}
\newcommand{\ka}{\kappa}
\newcommand{\Ka}{\mathcal K}
\newtheorem{theo}{Theorem}[section]
\newtheorem{lem}[theo]{Lemma}
\newtheorem{prop}[theo]{Proposition}
\newtheorem{cor}[theo]{Corollary}
\newtheorem{rem}[theo]{Remark}
\newcommand{\Bes}{\rm BES}
\newcommand{\dis}{{\displaystyle}}
\title{Rate of escape of conditioned Brownian motion}
\author{Orph\'ee Collin$^{~1}$, Francis Comets$^{~2}$}
\begin{document}

\maketitle

{\footnotesize 
\noindent $^{~1}~$DMA, \'Ecole normale sup\'erieure, Universit\'e PSL, CNRS, 75005 Paris, France\\
\noindent e-mail: \texttt{orphee.collin@normalesup.org}\\
\noindent $^{~2}~$Universit\'e de Paris and LPSM,
Math\'ematiques, 
 case 7012, F--75205 Paris
Cedex 13, France
\\
\noindent e-mail:
\texttt{comets@lpsm.paris}
}
\begin{abstract} 
%%%%%%%   guillemets anglais USA:     ci-dessous
%``aa''
%``a"
We
 study the norm of the two-dimensional Brownian motion conditioned to stay outside the unit disk at all times. 
By conditioning the process is changed from barely recurrent to slightly transient.
%% We obtain sharp results on the rate of escape to infinity  of the process of future minima. 
We obtain sharp results on the rate of escape to infinity  of the process of future minima: \\
\phantom{} \qquad (i) we find an integral test on the function $g$ so that the future minima process drops beyond the barrier $\exp \{ \ln t \times g(\ln \ln t)\}$ at arbitrary large times; \\
\phantom{} \qquad (ii) we show that the     future minima process exceeds $K \sqrt{ t \times \ln \ln \ln t}$ at arbitrary large times with probability 0 [resp., 1] if $K$ is larger [resp., smaller] than some 
positive constant.\\
For this, we introduce a renewal structure attached to record times and values. Additional results  are given for the long time behavior of the norm. 
\\[.3cm]\textbf{Keywords:} Brownian motion, Bessel process, conditioning,
transience, Wiener moustache, regeneration, upper-class and lower-class, random difference equation, autoregressive process
\\[.3cm]\textbf{AMS 2020 subject classifications:}
60K35, 60J60, 60J65, 60G17
\end{abstract}

%{\bf To do list:}
%\begin{enumerate}
%%\item preuve grandes valeurs ptag
%%\item completer les estimees techniques; cf. [REFE] , ??,\ldots 
%%\item abstract
%\item                                                                           figure 1
%\item Quelques ?? \`a trancher
%%\item Notation BES$^2$ ou Bes$^2$
%\end{enumerate}
% 
 
\tableofcontents 

\section{Introduction}

This paper is devoted to the planar Brownian motion conditioned to stay outside the unit ball $\B(0,1)$ at all times. Besides its own appeal from its fundamental character, this process has attracted a keen interest as being the elementary brick of the two-dimensional Brownian random interlacement recently introduced in \cite{CP20}.
By rotational symmetry,  the norm  $R$ of the conditioned Brownian motion  itself follows a %one-dimensional diffusion in $[1,\8)$,  
stochastic differential equation  in $[1,\8)$,  
\begin{equation} 
 d R(t) = \Big(\frac{1}{R(t)\ln R(t)} + \frac{1}{2R(t)}\Big)dt 
 + dB(t)
   \label{df_Rt} 
\end{equation}
with $B$ a standard Brownian motion in $\R$, and  we can -- and we will -- restrict the study of the conditioned process to that of $R$ itself since the angle obeys a diffusion subordinated to it.  The two-dimensional Brownian motion is critically recurrent, but conditioning it outside  the unit ball turns it into %slightly 
(delicately) transient. A natural question is the rate at which  $R(t)$ tends to $\8$ as $t \to \8$, this is the object of the present paper. A measure of the reluctance of $R$ to tend to infinity is given by the {\it future minima process}  
\begin{equation} \label{def:futuremin}
M(t)= \inf \{ R(s); s \geq t \}
\end{equation}
which is non-decreasing to $\8$ a.s. 
The corresponding model in the discrete case, the two-dimensional simple random walk conditioned to avoid the origin at all times, has motivated many recent papers. Estimates on the future minimum distance to the origin have been obtained in \cite{PRU}, we will use them as benchmarks. It is also shown that two independent conditioned walkers meet infinitely often although they are transient. The range of the walk, i.e. the set of visited sites,  
is studied in  \cite{GPV19}:  if a finite $A \subset \Z^2 \setminus \{0\}$ is ``big enough and well distributed in space'', then the proportion of visited sites is approximately uniformly distributed on $[0,1]$.
In \cite{P19} the explicit formula for the Green function  is obtained, and a survey is given in Chapter 4 of \cite{P2srw}.

For dimensions $d \geq 3$, the random interlacement model has been introduced in \cite{Szn10} to describe the local picture of the visited set by a random walk at large times on a large $d$-dimensional torus, and similarly  in  \cite{Sz13},  the Brownian random interlacement  to describe the Wiener sausage around the Brownian motion on a 
$d$-dimensional torus.
For dimension $d =2$, the random interlacement model is the local limit of the visited set by the random walk {\it around a point which has not been visited so far}  \cite{CPV16}, and  analogously,  the Brownian random interlacement is the local limit of the Wiener sausage on the two-dimensional torus {\it around a point which is outside  the sausage}
  \cite{CP20}. Formally, the  two-dimensional Brownian random interlacement is defined as a Poisson process of bi-infinite paths, which are rescaled instances of the so-called ``Wiener moustache". The Wiener moustache is obtained by 
  gluing two instances (for positive  and  negative times, see Figure 1 in  \cite{CP20}) of planar Brownian motion conditioned to stay outside the unit ball, which are independent except that they share  the same starting point (see Lemma 3.9 in \cite{CP20}). Hence, the process we consider in this paper is the building brick of Brownian random interlacement in the plane. We also recall that the complement of the sausage around the interlacement has an interesting phase transition, changing from a.s. unbounded to a.s. bounded as the Poisson intensity is increased,  see Th. 2.13 in  \cite{CP20} and \cite{CP17} for the discrete case.

With a slight abuse of terminology, we say $f(t) \leq g(t) \; i.o.$ (infinitely often) if the set $\{t \geq 0: f(t)\leq g(t)\}$ is unbounded, and 
 $f(t) \leq g(t) \; ev.$ (eventually) if the set $\{t \geq 0: f(t)\leq g(t)\}$ is a neighborhood of $\8$ in $\R_+$.
\medskip

We now give a %partial 
short overview of some of our results on the rate of escape of $R$ to infinity. They are consequences of the results in section \ref{sec:res}.
%%%%%%%
\begin{theo} \label{cor:Kov}
%For $g$ as in Theorem \ref{th:critint},
  For $g: \R_+ \to \R_+$  non-increasing such that $(\ln t) g(\ln \ln  t)$
   is non-decreasing,  
%   \\  \phantom{} \quad $\IP(  M(t) \leq e^{(\ln t) g(\ln \ln t)}\; {i.o.})= 0 {\rm\ or\ } 1 $
%according to $\int^\8 g(u) du < \8$ or $= \8$.
\begin{quote}
 $\IP\Big(  M(t) \leq e^{(\ln t) g(\ln \ln t)}\; {i.o.}\Big)= 
\left\{
\begin{array}{c}
  0   \\
  1  
\end{array}
\right.
 $
\quad according to \quad $\int^\8 g(u) du 
\left\{
\begin{array}{c}
 < \8  \\
 = \8
\end{array}
\right.
$.
\end{quote}
\end{theo}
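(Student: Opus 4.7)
The plan is to reduce the integral test for $M$ to a Kolmogorov-type test for the future infimum of a 3-dimensional Bessel process, via a time change of the log-norm, and then to invoke the regeneration structure developed in Section~\ref{sec:res} to obtain the $0$--$1$ dichotomy.

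\textbf{Step 1 (identification).} We apply It\^o's formula to $Y(t) := \ln R(t)$: from \eqref{df_Rt},
\begin{equation*}
dY = \frac{e^{-2Y}}{Y}\,dt + e^{-Y}\,dB,
\end{equation*}
so the clock $A(t) := \int_0^t R(s)^{-2}\,ds$ normalises the diffusion coefficient, and $U(\tau) := Y(A^{-1}(\tau))$ satisfies $dU = U^{-1}\,d\tau + dW$; hence $U$ is a $\Bes(3)$ process. Setting $V(\tau) := \inf_{\sigma\geq \tau} U(\sigma)$, we obtain the key identity $\ln M(t) = V(A(t))$.

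\textbf{Step 2 (clock and rescaling).} The self-consistent equation $Y(t) = U(A(t))$ together with $U(\tau)\sim\sqrt{\tau}$ gives $A(t)\sim (\ln t)^2/4$ a.s., i.e.\ $\ln t \sim 2\sqrt{A(t)}$ and $\ln\ln t \sim \tfrac{1}{2}\ln A(t)$. The event $\{M(t)\leq e^{(\ln t)g(\ln\ln t)}\}$ then translates, with $\tau = A(t)$, into $\{V(\tau)\leq \psi(\tau)\}$ where
\begin{equation*}
\psi(\tau) := 2\sqrt{\tau}\,g\!\left(\tfrac{1}{2}\ln \tau\right).
\end{equation*}
The hypotheses ($g$ nonincreasing, $(\ln t)g(\ln\ln t)$ nondecreasing) make $\psi$ nondecreasing and $\psi(\tau)/\sqrt{\tau}$ nonincreasing, which is the natural regularity for a Kolmogorov test on $\Bes(3)$.

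\textbf{Step 3 (Bessel integral test).} For $\Bes(3)$, the local martingale $1/U$ yields the conditional law $\IP(V(\tau)\leq a\mid U(\tau)) = (a/U(\tau))\wedge 1$, from which $\IP(V(\tau)\leq\psi(\tau))\asymp \psi(\tau)/\sqrt{\tau}$ whenever $\psi(\tau)\ll\sqrt{\tau}$. The classical Kolmogorov/Dvoretzky--Erd\H{o}s integral test for the future infimum of $\Bes(3)$ then gives
\begin{equation*}
\IP(V(\tau)\leq \psi(\tau)\text{ i.o.})\in\{0,1\} \quad\text{according as}\quad \int^\infty \psi(\tau)\,\tau^{-3/2}\,d\tau < \infty \text{ or }=\infty.
\end{equation*}
The convergent half is a direct Borel--Cantelli along a geometric grid $\tau_n = 2^n$; the divergent half uses asymptotic independence of successive record excursions of $U$, which is precisely the regeneration structure of Section~\ref{sec:res}.

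\textbf{Step 4 (change of variable).} Substituting $\psi$ and letting $u = \tfrac{1}{2}\ln \tau$,
\begin{equation*}
\int^\infty \psi(\tau)\,\tau^{-3/2}\,d\tau \;=\; \int^\infty \frac{2\,g(\tfrac12\ln\tau)}{\tau}\,d\tau \;=\; 4\int^\infty g(u)\,du,
\end{equation*}
so the Bessel criterion is equivalent to $\int^\infty g(u)\,du$ being finite or infinite, which matches the theorem.

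\textbf{Main obstacle.} The comparison of the random clock $A(t)$ with deterministic time: the Bessel test lives naturally on the $\tau$-scale while the theorem is stated in physical time $t$, so a quantitative version of $A(t)\sim (\ln t)^2/4$ is needed. Monotonicity of the barrier absorbs small fluctuations in the convergent direction; the divergent direction in particular requires sharp deviation bounds on $A(t)$ and an effective $0$--$1$ law, both of which are furnished by the record-based regeneration structure of Section~\ref{sec:res}.
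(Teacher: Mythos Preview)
Your reduction in Step~1 is correct and elegant: with $Y=\ln R$ and the clock $A(t)=\int_0^t R^{-2}\,ds$, one does get $U=Y\circ A^{-1}$ a $\Bes^3$ and $\ln M(t)=V(A(t))$. This is a genuinely different route from the paper, which deduces Theorem~\ref{cor:Kov} as an immediate restatement of Theorem~\ref{th:critint} and proves the latter in Section~\ref{sec:prth:crintint} entirely in physical time, using the regeneration pairs $(T_n,A_n)$ of Section~\ref{sec:reg} together with the tail estimate $\IP(T\geq t)\sim(\ln r)/\ln t$ of Proposition~\ref{prop:tailT} and Borel--Cantelli/Kochen--Stone arguments; no Bessel time change appears.

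The gap is in Step~2. The assertion $A(t)\sim(\ln t)^2/4$ a.s.\ is false. From $t=\int_0^{A(t)}e^{2U(\sigma)}\,d\sigma$ one gets $\ln t = 2\sup_{[0,A(t)]}U + O(\ln A(t))$, not $2\sqrt{A(t)}$: the exponential integral is governed by the running maximum of $U$, and $\sup_{[0,\tau]}U/\sqrt\tau$ does \emph{not} converge a.s.\ --- by the LIL it is i.o.\ as large as $(1-\eps)\sqrt{2\ln\ln\tau}$, and by Chung's LIL i.o.\ as small as $c/\sqrt{\ln\ln\tau}$. Consequently the random factor $\ln t/(2\sqrt{A(t)})$ oscillates by $\sqrt{\ln\ln}$ multiplicative factors, and after the time change the barrier in $\tau$-time is the \emph{random} function $L(\tau)\,g(\ln L(\tau))$ with $L(\tau)=\ln A^{-1}(\tau)\sim 2\sup_{[0,\tau]}U$, not the deterministic $\psi(\tau)=2\sqrt\tau\,g(\tfrac12\ln\tau)$. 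This matters at exactly the scale of the integral test: for instance with $g(u)=1/(u\ln u)$ (divergent case) the Bessel integral $\int\psi(\tau)\tau^{-3/2}\,d\tau$ diverges, but replacing $2\sqrt\tau$ by $c\sqrt{\tau/\ln\ln\tau}$ makes it converge. So the Dvoretzky--Erd\H os test for a deterministic barrier cannot be invoked as stated.

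You flag this as the ``main obstacle'' but do not resolve it. The appeal to ``the regeneration structure of Section~\ref{sec:res}'' is not a fix: that structure is built for $R$ and its record times $(T_n,A_n)$, and gives no direct control on the clock $A(t)$ relative to $\sup_{[0,\tau]}U$. Making your approach rigorous would require either a Dvoretzky--Erd\H os--type test for $\Bes^3$ against the \emph{random, path-dependent} barrier $2\sup_{[0,\tau]}U\cdot g(\tfrac12\ln\tau)$, or sharp two-sided a.s.\ bounds on $\ln t/\sqrt{A(t)}$ that are simply not available. The paper's direct approach sidesteps this entirely.
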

This result with an integral condition %is reminiscent 
has a flavor of Kolmogorov's test (see, e.g., sect. 4.12 in \cite{ImcK}).
%, but the process $M$ here is not Markov.
%%%%%
\begin{theo} \label{cor:OC}
The limit 
$$
K^*=\limsup_{t \to \8} \frac{M(t)}{\sqrt{t \ln \ln \ln  t}} 
$$
is a.s. constant, and $0<K^*<\8$.
\end{theo}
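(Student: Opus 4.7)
My plan is to prove Theorem~\ref{cor:OC} in three parts: constancy of $K^*$, the upper bound $K^* < \infty$, and the lower bound $K^* > 0$. A useful preliminary reduction: since $M$ is non-decreasing and constant equal to $L_n := M(\tau_n)$ on each interval $[\tau_n, \tau_{n+1})$ between record times of the renewal structure introduced earlier in the paper, while $t \mapsto \sqrt{t \ln\ln\ln t}$ is increasing, the continuous $\limsup$ collapses to one along the record sequence:
$$
K^* \;=\; \limsup_{n \to \infty} \frac{L_n}{\sqrt{\tau_n \ln\ln\ln \tau_n}}.
$$

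For the constancy of $K^*$, the renewal structure provides that, conditionally on the record values $L_n$, the post-record pieces $(R(\tau_n + u);\, u \leq \tau_{n+1}-\tau_n)$ are independent across $n$. Hence $K^*$ is a tail functional of this (conditionally) independent sequence, and Kolmogorov's 0-1 law yields that $K^*$ is a.s.\ a deterministic element of $[0, \infty]$.

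Both the upper and lower bound come from Borel--Cantelli arguments applied to the events $A_n^K := \{L_n \geq K \sqrt{\tau_n \ln\ln\ln \tau_n}\}$, equivalently $\{\tau_n/L_n^2 \leq 1/(K^2 \ln\ln\ln \tau_n)\}$, with $L_n$ taken on a geometric scale (e.g.\ $L_n = 2^n$). The key quantitative input is a sharp two-sided small-value tail estimate for the normalized cycle duration $\tau_n/L_n^2$. The $\ln\ln\ln t$ scaling in the theorem is precisely the critical one at which the resulting series $\sum_n \IP(A_n^K)$ switches from convergent (for $K$ large) to divergent (for $K$ small). The first Borel--Cantelli lemma then yields $K^* \leq K$ a.s.\ for $K$ sufficiently large, while the conditional second Borel--Cantelli lemma (using the conditional independence of cycles from the renewal structure) yields $K^* \geq K$ a.s.\ for $K$ sufficiently small. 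Combined with the constancy of $K^*$, this gives $0 < K^* < \infty$ a.s.

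The main obstacle, and the technical heart of the argument, is the sharp two-sided tail estimate for $\tau_n/L_n^2$ near $0$. The precise shape of this tail reflects the critical nature of the drift $1/(R \ln R) + 1/(2R)$ in the SDE~\eqref{df_Rt}, and is where the random-difference-equation / autoregressive structure mentioned in the abstract is put to use: the logarithm of a cycle duration should satisfy an approximate AR(1) recursion whose stationary small-value tail controls the probability of an unusually short cycle. Obtaining these bounds with the precision needed to identify the correct threshold scaling $\ln\ln\ln t$ (though without computing $K^*$ explicitly) is what I would expect to be the delicate step, relying on the renewal construction developed in the preceding sections.
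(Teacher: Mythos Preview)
Your overall strategy matches the paper's: the bounds $0<K^*<\infty$ are exactly Theorem~\ref{th:grandesvaleurs}, whose proof in Section~\ref{sec:grandesvaleurs} is indeed a Borel--Cantelli analysis of the events $\{S_n\leq\beta/\ln_2 n\}$ with $S_n=T_n/A_n^2$, and constancy of $K^*$ comes from a zero--one law. But two steps in your outline are incorrect as stated. First, $M$ is \emph{not} constant on $[\tau_n,\tau_{n+1})$: one has $M(T_n)=A_n$ and $M(T_{n+1})=A_{n+1}>A_n$, with $M$ increasing in between. This is easily repaired (use $A_n\leq M(t)\leq A_{n+1}\leq rA_n$ on that interval), and your remark ``$L_n$ taken on a geometric scale, e.g.\ $L_n=2^n$'' is confused for the same reason: the record values $A_n$ are random, though they do grow like $r^{n/2}$ a.s., which is what converts $\ln_3 T_n$ into $\ln_2 n$ in the paper's argument.

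Second, and more seriously, your zero--one argument does not go through as written. Kolmogorov's $0$--$1$ law requires an unconditionally independent sequence and a tail event; ``conditional independence of the post-record pieces given $L_n$'' does not deliver this, and $K^*$ is not obviously in the tail $\sigma$-field of the i.i.d.\ increments $(A_n',T_n')$, since $A_n=A_1'\cdots A_n'$ and $T_n$ depend on all of them. The paper handles constancy differently: it simply invokes triviality of the tail $\sigma$-field of the one-dimensional diffusion $R$ (citing \cite{FO78}, \cite{Ros79}, \cite{Ro88}), which immediately makes the tail-measurable $K^*$ a.s.\ constant. An alternative, worked out in the second step of Lemma~\ref{lem:beta0} for the closely related quantity $\liminf_n S_n\ln_2 n$, is to show carefully that the relevant events coincide on a full-measure set with tail events of the i.i.d.\ sequence $(A_n',T_n')$; but this requires an actual argument, not just an invocation. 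Relatedly, for the divergent half the paper does not use a ``conditional second Borel--Cantelli'' as you suggest, but the Kochen--Stone correlation inequality (first step of Lemma~\ref{lem:beta0}) to get positive probability, and then the $0$--$1$ law to upgrade to probability one.
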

%%%%%%%
Though we do not know the actual value of $K^*$ we can see that both theorems are much finer than the corresponding Theorem 1.2 of \cite{PRU}. %So are the following ones:
These two theorems together yield  a precise version of the observation from \cite{P19} that the 
pathwise divergence of $R$ 
to infinity occurs in a highly irregular way.
The future minima process has been considered earlier, e.g. \cite{KLL94} and \cite{KLS96} for Bessel processes and for random walks, and \cite{Pardo} for positive self-similar Markov processes.
%Let us recall the similar result for  transient Bessel processes\footnote{ 
%We recall that $\Bes^d$ is the solution of $dX(t)= \frac{d-1}{2X(t)}dt + dB(t)$, it is the norm of the $d$-dimensional Brownian motion for $d$ integer. 
%}, Th. 4.1 in \cite{KLL94}: 
Let us recall the similar result for transient Bessel processes. 
Denote by $\Bes^d$ the $d$-dimensional Bessel process, i.e. the solution of the stochastic differential equation 
\begin{equation} \label{def:Bes}
dX(t)= \frac{d-1}{2X(t)}dt + dB(t)\;,
\end{equation}
 that is the norm of the standard Brownian motion in $\R^d$ when $d$ integer :
then, by Th. 4.1 in \cite{KLL94},
\begin{equation}\label{eq:limsupBES}
  {\rm for} \; d >2, \quad \limsup_{t \to \8} \frac{ \min \{ \Bes^d(s); s \geq t\}}{\sqrt{ 2 t \ln \ln t}}=1.
\end{equation}

An important (and beautiful) finding of our work is a renewal structure in Section \ref{sec:reg} which allows sharp estimates.  To illustrate that, let's mention that we will find a sequence of relevant random variables  $S_n>0$ solving a random difference equation
\begin{equation}\label{eq:ar}
S_n = \alpha_n S_{n-1} + \beta_n \;, \quad n \geq 1,
\end{equation}
 where the  sequence $(\alpha_n,\beta_n)_n$ is i.i.d. with positive coefficients, $\alpha_n < 1$  and $\beta_n$ with logarithmic tails,
 $\IP( \beta_1>t) \sim c/\ln t$ for large $t$. Although autoregressive processes AR(1) of the type \eqref{eq:ar} are usually addressed with exponential or power-law tail for $\beta_n$ \cite{BDMikosch}, the case of logarithmic tail has been also considered,  see
\cite{Kellerer}, \cite{ZGlynn}, \cite{BabillotBougerolElie}, and  also both papers \cite{Alsmeyer} and \cite{Zerner18} for a  recent account.
Interestingly, our model is critical  in the perspective of the Markov  chain $S_n$, in the sense that the actual value of  the constant $c$ is precisely the transition from 
recurrence to transience for the chain.

%It turns out that our model precisely falls in the critical case for the recurrence/transience dychotomy of the Markov 
% chain $S_n$. 

%\medskip
%
%Def: upper/lower class \cite{KLL94}
%Transient Bessel 

\medskip

The paper is organized as follows. We give   the main results in the next section.  The regeneration structure is defined
in Section \ref{sec:reg} , together with the basic estimates, and ending with Remark \ref{rem:RDE} on the above random difference equation. In the next section we prove some results showing that $R$ somewhat behaves at large times like the two-dimensional Bessel process.  In Sections \ref{sec:prth:crintint} and \ref{sec:grandesvaleurs} we prove the two above theorems.

%
%Time-series aspect: very heavy tails. Refe Wintenberger, Mikosch
%%%%%%%%%%%%%%%%%%%%%%%

\section{Main results} 

  We first collect a few properties of the  involved processes. 
  \medskip
  
  We start with some notations. Consider $W$ a two-dimensional standard Brownian motion and denote by $\Psf_x$ the law of $W$ starting at $x$, $\hW$ a  Brownian motion conditioned to stay outside the unit ball,  and denote by $\widehat \Psf_x$ its law starting at $x$, and $R=|\hW|$ its Euclidean norm with $P_r$ the corresponding law ($r=|x|)$. In this paper we are mainly interested in $\IP=P_1$. The construction of the process starting from $R(0)>1$ is standard from taboo process theory, and the one starting from $R(0)=1$ is given in definition 2.2 of \cite{CP20}.\medskip
  
   For a closed subset $B$ of the state space of a process $Y$, we denote the entrance time 
  $\tau(Y;B)=\inf\{ t \geq 0: Y(t)\in B\}$, and write for short $\tau(Y;r)=\tau(Y; \partial \B (0,r) )$ and also 
  $\tau(r)= \tau(R;r)$ when $Y=R$.
  The function $h(x)=\ln |x|$ is harmonic in $ \R^2 \setminus \{0\}$, positive on $ \R^2 \setminus \B(0,1)$ and vanishes on the unit circle. Then, the law $\widehat \Psf_x$ of the planar Brownian motion $W$ 
conditioned outside $\B(0,1)$ is given by Doob's $h$-transform of $\Psf_x$. 
   By definition, for $A \subset {\cal C}(\R^+,  \R)$  which is ${\cal F}_{\tau(r_1)}$-measurable ($1< |x|=r<r_1$) 
\begin{eqnarray}\label{eq:MBconditionne}
%P_r( R \in A) &=& P_x ( |W| \in A \big \vert \tau^{|W|}(r_1) < \tau^{|W|}(1) ) \\ \nn &=& P_x ( |W| \in A, \tau^{|W|}(r_1) < \tau^{|W|}(1) ) \times \frac{\ln r_1}{\ln |x|}
P_r( R \in A) &=& \Psf_x ( |W| \in A \big \vert \tau( W,  r_1)< \tau(W, 1)) \\ \nn &=& \Psf_x ( |W| \in A,  
 \tau( W,  r_1)< \tau(W, 1)    ) \times \frac{\ln r_1}{\ln |x|}
\end{eqnarray}
recalling that 
$ \Psf_x(\tau( W,  r_1)< \tau(W, 1)) = \frac{\ln |x|}{\ln r_1}$ since
$\ln |x|$ is harmonic in $\R^2 \! \setminus \! \{0\}$. 
  
 Another remarkable  property is Remark 3.8 in \cite{CP20} :
For all $x \notin \B(1), \rho>0$, we have 
\[
\widehat \Psf_x \big[ \tau(\hW; \B(y,\rho))<\8\big] \to \frac{1}{2}
 \qquad \text{as }  |y|\to \8 \;.
\]
The scale function for the process $R$ -- that is, the unique (up to affine transformation) real function such that $S(R(t))$ is a local martingale -- is $S(r)=\frac{-1}{ \ln r}$. Then, for $1<a<r<b$,
\begin{equation}
\label{eq:Rsortie}
 P_r[\tau(b)<\tau(a)] 
   =\frac{\ln(r/a)  \times \ln b}{\ln (b/a) \times\ln r}.
\end{equation}

We refer to section 2.1 in \cite{CP20} for more details on the many interesting properties of $\hW$ and $R$.

\subsection{Results for the future minimum}   \label{sec:res}

With $L(t)= \ln (t \vee 1)$ and $\ln (\cdot)$ the natural logarithm, define $ \ln_1 (t)=L(t), $ and for $k \geq 2, \ln_k(t) = L( \ln_{k-1}(t))$ so that $\ln_k(t) = (\ln \circ \ldots  \circ \ln) (t)$ for $t$ large.

\begin{theo}\label{th:critint} For $g: \R_+ \to \R_+$  non-increasing such that $(\ln t) g(\ln_2 t)$ is non-decreasing,
%%
%note: and converging to 0 at $\8$ est automatic
%%
we have:
\begin{equation}\label{eq:critint1}
\int^\8 g(u) du < \8 \implies {\ a.s.,\ } M(t) \geq e^{(\ln t) g(\ln_2t)}\quad  { eventually,}
\end{equation}
and
\begin{equation}\label{eq:critint2}
\int^\8 g(u) du = \8 \implies {\ a.s.,\ } M(t) \leq e^{(\ln t) g(\ln_2t)}\quad  {\it infinitely\ often.}
\end{equation}
\end{theo}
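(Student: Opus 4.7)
Both implications are Borel--Cantelli arguments along the doubly-exponential scale $t_n := \exp(\exp n)$, chosen so that $\ln_2 t_n = n$ and $\sum_n g(n)$ has the same convergence character as $\int^\8 g$. Set $\phi(t) := \exp\bigl((\ln t)\,g(\ln_2 t)\bigr)$. In the nontrivial case $g\to 0$, so $\phi(t)=t^{o(1)}$ is much smaller than the typical order $\sqrt t$ of $R(t)$, and $\{M(t)\le \phi(t)\}$ is a rare dip event. The basic tool is \eqref{eq:Rsortie}, which in the limit $b\to\8$ yields $P_r[\tau(a)<\8]=\ln a/\ln r$ for $1<a<r$; combined with the strong Markov property at time $t$, this gives the identity $\IP(M(t)\le a \mid R(t)=r)=\ln a/\ln r$ for $a\le r$.

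\textbf{Convergence part.} The Markov property at $t_n$ and the identity above yield
\[
\IP\bigl(M(t_n)\le \phi(t_{n+1})\bigr) \le \IE\!\left[\frac{\ln \phi(t_{n+1})}{\ln R(t_n)}\,\1{R(t_n)>\phi(t_{n+1})}\right] + \IP\bigl(R(t_n)\le \phi(t_{n+1})\bigr).
\]
Using the Bessel-$2$-type control on $R$ announced for Section~4, one has $\ln R(t_n)\ge \tfrac14 e^n$ outside an event of summable probability, and $\IP(R(t_n)\le \phi(t_{n+1}))$ is even smaller. Since $\ln \phi(t_{n+1})=e^{n+1} g(n+1)$, this produces $\IP(M(t_n)\le \phi(t_{n+1}))\le C\,g(n+1)$, whose sum is finite when $\int g<\8$. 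Borel--Cantelli I then gives $M(t_n)>\phi(t_{n+1})$ eventually, and for $t\in[t_n,t_{n+1}]$ monotonicity of $M$ and $\phi$ yields $M(t)\ge M(t_n)>\phi(t_{n+1})\ge \phi(t)$, which is \eqref{eq:critint1}.

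\textbf{Divergence part.} The events $\{M(t_n)\le \phi(t_n)\}$ share their common future past $t_n$, so one cannot apply Borel--Cantelli II to them directly. Instead, one invokes the renewal structure of Section~\ref{sec:reg}: a.s.\ finite regeneration times $T_1<T_2<\cdots$ at which $R$ decouples from its past, with $\ln_2 T_k$ comparable to $k$ almost surely (a quantitative consequence of the control of the autoregressive chain \eqref{eq:ar}). Define
\[
A_k := \bigl\{R\text{ hits }[1,\phi(T_k)] \text{ during the excursion }[T_k,T_{k+1}]\bigr\},
\]
which are independent through the renewal filtration. By the escape formula, $\IP(A_k\mid \mathcal F_{T_k})\gtrsim \ln \phi(T_k)/\ln R(T_k)\asymp g(k)$ once the typical order of $R(T_k)$ is controlled via the chain $(S_n)$ of \eqref{eq:ar}. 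Hence $\sum_k \IP(A_k)=\8$ under $\int g=\8$, conditional Borel--Cantelli produces infinitely many $A_k$, and each of them furnishes a time $t\ge T_k$ with $M(t)\le \phi(T_k)\le \phi(t)$, proving \eqref{eq:critint2}.

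\textbf{Main obstacle.} The real work is in the divergence part: one must (i) set up the regeneration so that the $A_k$ are genuinely independent (through a suitable filtration), (ii) get matching two-sided bounds $\IP(A_k)\asymp g(k)$ so that the integral criterion is tight on both sides, and (iii) sharpen the almost-sure asymptotic $\ln_2 T_k\asymp k$ enough to transfer $\int g=\8$ to $\sum_k \IP(A_k)=\8$. All three reduce to fine asymptotics for the autoregressive chain \eqref{eq:ar} with the critical logarithmic tail $\IP(\beta_1>t)\sim c/\ln t$---precisely the material prepared in Section~\ref{sec:reg}.
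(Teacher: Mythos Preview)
Your convergence argument for \eqref{eq:critint1} is correct and in fact simpler than the paper's: you bypass the renewal machinery entirely, conditioning on $R(t_n)$ at the doubly-exponential times $t_n=e^{e^n}$ and invoking the exact formula $P_r(\tau(a)<\8)=\ln a/\ln r$ together with the comparison $R\ge \Bes^2$. The paper instead works through the renewal times $T_k$, bounding $\IP\big(k\max_{i\le k}r_+^{i-1}T_i'\ge e^{k/\delta(k)}\big)$ along a dyadic subsequence (Lemma~\ref{lem:1...}) to control $M(t)\ge A_{k(t)}\ge r_-^{k(t)/2}$ from below. Your route is more direct; the paper's has the advantage that the same renewal structure serves both halves of the theorem and Theorem~\ref{th:grandesvaleurs}.

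Your divergence argument for \eqref{eq:critint2}, however, has genuine gaps. First, the claim ``$\ln_2 T_k$ comparable to $k$'' is off by an exponential: since $T_k\le k\max_{i\le k}r_+^{i-1}T_i'$ and $\IP(T'>t)\sim \ln r/\ln t$, one gets $\ln T_k$ of order $k$ (not $e^k$), hence $\ln_2 T_k\asymp \ln k$. Second, and more seriously, your event $A_k$ is degenerate: by construction $R(T_k)=A_k=M(T_k)$ and $R(s)\ge A_k$ for all $s\ge T_k$, so ``$R$ hits $[1,\phi(T_k)]$ on $[T_k,T_{k+1}]$'' is exactly the ${\cal G}_k$-measurable event $\{A_k\le\phi(T_k)\}$; its conditional probability given the past is $0$ or $1$, and the escape formula $\ln\phi(T_k)/\ln R(T_k)$ does not apply. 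The paper proceeds differently: it fixes deterministic $t_k=e^{e^k}$ and $n_k=\lfloor c_3(\ln t_k)g(\ln_2 t_k)\rfloor$, and uses the genuinely independent events $E_k=\{\max_{n_k< i\le n_{k+1}}T_i'\ge t_{k+1}\}$ (Lemma~\ref{lem:petitptag}) to force $T_{n_k}\ge t_k$ infinitely often, whence $M(t_k)\le M(T_{n_k})=A_{n_k}\le r_+^{n_k}\le \phi(t_k)$. The independence you need lives in the increments $T_i'$, not in events defined through the accumulated time $T_k$.
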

%%%%%%%%
(Note that the second assumption is quite natural in view of the monotonicity of $M(t)$.) 
%A direct consequence is:
%\begin{cor} \label{cor:Kov}
%For $g$ as in Theorem \ref{th:critint},
%%  For $g: \R_+ \to \R_+$  non-increasing such that $(\ln t) g(\ln_2 t)$ is non-decreasing,
%$\IP(  M(t) \leq e^{(\ln t) g(\ln_2t)}\; {i.o.})= 0 {\rm\ or\ } 1 $
%according to $\int^\8 g(u) du = \8$ or $< \8$.
%\end{cor}
Theorem \ref{cor:Kov} is a direct consequence of the above theorem. 
This result with an integral condition is reminiscent of Kolmogorov's test (see, e.g., sect. 4.12 in \cite{ImcK}), but the process $M$ here is not Markov.

These estimates are stronger than the corresponding ones in Th. 1.2 of \cite{PRU}. So are the following ones:

\begin{theo}\label{th:grandesvaleurs} There exist $0<K'<K<\8$ such that,  almost surely,
\begin{equation}\label{eq:grandesvaleurs1}
M(t) \leq K \sqrt{ t \ln_3t}\quad {\it eventually,}
\end{equation}
and
\begin{equation}\label{eq:grandesvaleurs2}
M(t) \geq K' \sqrt{ t \ln_3t}\quad  {\it infinitely\ often.}
\end{equation}
\end{theo}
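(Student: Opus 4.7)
The plan is to prove the upper bound \eqref{eq:grandesvaleurs1} and the lower bound \eqref{eq:grandesvaleurs2} separately; both rest on the identity obtained from \eqref{eq:Rsortie} by letting $b\to\infty$, namely
\[
P_r\!\left(\inf_{s\geq 0} R(s) > a\right) = \left(1 - \frac{\ln a}{\ln r}\right)_{\!+}, \qquad 1<a<r,
\]
combined with the Markov property of $R$ and, for the lower bound, the regeneration structure of Section~\ref{sec:reg}.

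For \eqref{eq:grandesvaleurs1} I would work along a sparse subsequence $t_n = e^n$, set $a_n = K\sqrt{t_n\ln_3 t_n}$, and bound
\[
\IP\bigl(M(t_n) > a_n\bigr) = \IE\!\left[\left(1 - \frac{\ln a_n}{\ln R(t_n)}\right)_{\!+}\right]
\]
using the tail estimates on $R(t_n)$ produced by the Bessel comparison results of the preceding section, which place $\ln R(t)$ near $\tfrac12\ln t$ with fluctuations on a scale much smaller than $\ln_3 t$. Splitting the expectation according to whether $R(t_n)\leq a_n$ or not should give, for $K$ large enough, a bound summable in $n$, so that Borel--Cantelli, together with the monotonicity of $M$ and the near-constancy of $t\mapsto\sqrt{t\ln_3 t}$ on each $[t_n,t_{n+1}]$, propagates the inequality to all $t$.

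For \eqref{eq:grandesvaleurs2} I would use the renewal chain of Section~\ref{sec:reg}: at the regeneration times $T_n$ the process reaches a level $S_n$ obeying the AR(1) recursion \eqref{eq:ar} with i.i.d.\ innovations $\beta_n$ satisfying $\IP(\beta_1>u)\sim c/\ln u$. The heavy logarithmic tail produces, by a direct second Borel--Cantelli on the independent innovations, i.o.\ indices $n$ at which $\ln S_n$ exceeds its typical size by a factor comparable to $\ln_3 T_n$. At such a ``record'' index the displayed identity above gives $P_{S_n}\!\bigl(\inf_{s\geq 0} R(s) > K'\sqrt{T_n\ln_3 T_n}\bigr)$ bounded away from $0$; since by the regeneration property this ``success'' event is independent of $\mathcal{F}_{T_n}$, a conditional Borel--Cantelli argument yields $M(T_n)\geq K'\sqrt{T_n\ln_3 T_n}$ i.o.\ almost surely.

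The main obstacle is this last calibration step: one has to relate $T_n$ to $n$ and track how often $\ln S_n$ is atypically large \emph{precisely} at the right size to produce the triple-logarithmic correction. This cannot come from a soft comparison with $\Bes^d$, $d>2$ (which would deliver the $\ln_2 t$ of \eqref{eq:limsupBES}): it requires exploiting directly the \emph{critical} nature of the AR(1) recursion with logarithmic innovations pointed out in Remark~\ref{rem:RDE}, which is exactly where the delicate transience of $R$ translates into the $\ln_3$ scale for $M(t)$.
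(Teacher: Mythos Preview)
Your plan for \eqref{eq:grandesvaleurs1} is a plausible alternative to the paper's, but the step you gloss over is exactly where the difficulty lies. To run first Borel--Cantelli along $t_n=e^n$ you must show
\[
\sum_n \IE\Big[\Big(1-\frac{\ln a_n}{\ln R(t_n)}\Big)_{\!+}\Big]<\infty,\qquad a_n=K\sqrt{t_n\ln_3 t_n}.
\]
A mere second-moment bound on $R(t)/\sqrt t$ (which is all that Proposition~\ref{prop:comp} together with the proof of Corollary~\ref{prop:cvlaw} yields) gives only $\IP(R(t_n)/\sqrt{t_n}>x)\le C/x^2$, and then the displayed sum behaves like $\sum_n 1/(n\ln\ln n)$, which diverges; higher polynomial moments do not help either. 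What is needed is a uniform sub-Gaussian tail $\IP(R(t)/\sqrt t>x)\le Ce^{-cx^2}$, which does not follow directly from the Bessel comparison as stated (the coupling with $\Bes^{2+\delta}$ holds only after the random time $\sigma$). This gap may be fillable via the $h$-transform and a careful treatment of the taboo indicator $\1{\tau(|W|,1)>t}$, but you have not done it, and it is not routine. The paper bypasses this issue entirely by working with the regeneration times: it shows $\sum_n\IP(S_n\le\beta/\ln_2 n)<\infty$ for small $\beta$, where $S_n=T_n/A_n^2$, using the product structure $\prod_i\IP[T\le t_n r^{c_i i}]\le D\,i^{(n)}/n$ coming from the i.i.d.\ cycles; this supplies the crucial $1/n$ directly, without any tail estimate on $R(t)$.

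Your plan for \eqref{eq:grandesvaleurs2} rests on a misreading. At the regeneration time $T_n$ the process sits at $A_n=R(T_n)=M(T_n)$, not at $S_n=T_n/A_n^2$; and what one needs is $A_n\ge K'\sqrt{T_n\ln_3 T_n}$, i.e.\ $S_n$ \emph{small}, of order $1/\ln_2 n$. A large innovation $\beta_n=T_n'/(A_n')^2$ drives $S_n$ \emph{up}, which is the wrong direction; ``record'' values of $\beta_n$ therefore produce the opposite of what you want. The event $\{S_n\le\beta/\ln_2 n\}$ is a small-ball event requiring \emph{all} of $T_1',\dots,T_n'$ to be simultaneously small, and the paper obtains the requisite lower bound $\IP(S_n\le\beta/\ln_2 n)\gtrsim \ln_2 n/(n\ln n)$ by a delicate argument using the lower tail of $T$ (Proposition~\ref{prop:LtailT} and Lemma~\ref{lem:mino}). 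It then upgrades the divergent series to an almost-sure statement via Kochen--Stone plus a $0$--$1$ law (Lemma~\ref{lem:beta0}); a naive second Borel--Cantelli fails because the events $\{S_n\le t_n\}$ are not independent. Your closing remark that the critical AR(1) structure is the heart of the matter is correct, but it manifests as the balance $\IP(S_n\le t_n)\asymp 1/(n\ln n)$ at $t_n\asymp 1/\ln_2 n$, not as occasional large innovations.
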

%%%%%
%We  reformulate the result:
%%%%%%
%\begin{cor} \label{cor:OC}
%The limit 
%$$
%K^*=\limsup_{t \to \8} \frac{M(t)}{\sqrt{t \ln_3 t}} 
%$$
%is a.s. constant, and $0<K^*<\8$.
%\end{cor}
Theorem \ref{cor:OC} is essentially a reformulation of Theorem \ref{th:grandesvaleurs}, it will be proved below Remark \ref{rem:6.2}.

We recall the similar result \eqref{eq:limsupBES} for  transient Bessel processes:  a.s. for all
$a< \sqrt 2 < b$, the future minima process
$\min \{ \Bes^d(s); s \geq t\}$ is eventually smaller than $b \sqrt{ t \ln_2 t}$ and infinitely often larger than  $a \sqrt{ t \ln_2 t}$.
%Let us recall the similar result for    transient Bessel processes, Th. 4.1 in \cite{KLL94}: 
%$$
%  {\rm for} \; d >2, \quad \limsup_{t \to \8} \frac{ \min \{ \Bes^d(s); s \geq t\}}{\sqrt{ 2 t \ln_2 t}}=1.
%$$

Finally we mention that, for $d >2$,  $\min \{ \Bes^d(s); s \geq t\} \leq \eps \sqrt{ 2 t \ln_2 t}$ i.o., a.s.  for all $\eps>0$. (See \cite{KLL94}, P.349).

\subsection{Long time behavior of $R(t)$} \label{sec:LTB}

At large times the process $R$ behaves like $\Bes^2$. We emphasize that this is for the marginal law, but not for the future minimum. We formulate here precise statements of these facts.

It is well known that  the random variable $t^{-1/2}{\Bes}^2(t)$ converges to  the Rayleigh distribution %$\nu$ 
\begin{equation}\label{eq:hrayleigh}
d\nu(x) = x e^{-x^2/2} {\mathbf 1}_{(0,\8)}(x) dx 
\end{equation}
as $t \to \8$. Similarly for $R$, we have
%%%%%%%%%%%
\begin{theo}\label{thMarg1}
Let $Z \sim \nu$. As $t \to \8$,
\begin{equation}\nn
\frac{R(t)}{\sqrt{t}} \stackrel{\rm law}{\longrightarrow} Z \;.
\end{equation}
\end{theo}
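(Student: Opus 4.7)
The plan is to use Doob's $h$-transform to express the transition density of $R$ in terms of the density of a two-dimensional Bessel process killed at the unit circle, and then extract the Rayleigh limit by Brownian scaling. For $r_0>1$, identity \eqref{eq:MBconditionne} with $h(x)=\ln|x|$, combined with rotational invariance of $\hW$, gives the transition density of $R$ as
$$p_t^R(r_0,r)=\frac{\ln r}{\ln r_0}\,q_t(r_0,r),$$
where $q_t$ is the transition density of $\Bes^2$ started at $r_0$ and killed upon hitting $1$. For a bounded continuous test function $g$, substituting $r=\sqrt t\,u$ and splitting $\ln r = \tfrac12\ln t + \ln u$ yields
$$\IE_{r_0}[g(R(t)/\sqrt t)] = \frac{\ln t}{2\ln r_0}\int_{1/\sqrt t}^\infty g(u)\,\sqrt t\,q_t(r_0,\sqrt t\,u)\,du + \frac{1}{\ln r_0}\int_{1/\sqrt t}^\infty g(u)\ln u\cdot \sqrt t\,q_t(r_0,\sqrt t\,u)\,du.$$

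The main technical step is a scaled local limit theorem,
$$\sqrt t\,q_t(r_0,\sqrt t\,u)\ \sim\ \frac{2\ln r_0}{\ln t}\,u\,e^{-u^2/2}\qquad(t\to\infty),$$
holding uniformly for $u$ in compact subsets of $(0,\infty)$ and with a $u$-dominated integrable bound. Granted this, the first integral converges to $\int_0^\infty g(u)\,u\,e^{-u^2/2}\,du = \IE[g(Z)]$ while the second is $O(1/\ln t)$ and vanishes; hence $\IE_{r_0}[g(R(t)/\sqrt t)]\to \IE[g(Z)]$ for every $r_0>1$. At the integrated level, this asymptotic is exactly the classical Spitzer-type estimate $\Psf_x(\tau(W,1)>t)\sim 2\ln|x|/\ln t$ for two-dimensional Brownian motion avoiding the unit disk.

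To prove the scaled local limit statement, I would apply Brownian scaling. Setting $\rho = 1/\sqrt t$ and any $y$ with $|y|=r_0\rho$, one rewrites $\sqrt t\,q_t(r_0,\sqrt t\,u)$ as the time-$1$ density for the radial part of BM started at $y$ and killed on $\B(0,\rho)$, evaluated at $u$. Both $|y|$ and $\rho$ tend to $0$ with the ratio $|y|/\rho = r_0$ fixed. The limit follows from the asymptotic independence of $|W_1|$ and the survival event $\{\tau(W,\rho)>1\}$: one splits the path at a short stopping time after which the walk has escaped well beyond the small disk, applies the strong Markov property, and combines the hitting estimate $\Psf_y(\tau(W,\rho)>1)\sim 2\ln(|y|/\rho)/\ln(1/\rho^2)$ with the standard Gaussian behavior of $W_1$ (starting near $0$) to produce the Rayleigh density $u\,e^{-u^2/2}$. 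A dominating integrable majorant, obtained from $q_t\le p_t^{\Bes^2}$ and standard Gaussian control of the Bessel density, justifies exchanging limit and integral.

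Finally, the initial condition $R(0)=1$ (i.e.\ $\IP=P_1$) is handled by the Markov property. For any fixed $s>0$, $R(s)>1$ almost surely, so
$$\IE_1[g(R(t)/\sqrt t)] = \IE_1\!\left[F_{t,s}(R(s))\right],\qquad F_{t,s}(r_0) := \IE_{r_0}[g(R(t-s)/\sqrt t)].$$
For each realization of $R(s)>1$, the previous step (after replacing $\sqrt t$ by $\sqrt{t-s}$, using uniform continuity of $g$) gives $F_{t,s}(R(s)) \to \IE[g(Z)]$, and boundedness of $g$ plus dominated convergence in the outer expectation completes the proof. The principal obstacle is the scaled local limit theorem: while the integrated survival estimate is classical, promoting it to a density-level statement with uniformity in $u$ that permits passing the limit inside the integral requires careful two-dimensional heat-kernel bounds with a Dirichlet condition on the (shrinking) small disk.
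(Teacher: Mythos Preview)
Your approach is valid in outline but takes a genuinely different route from the paper. The paper never touches transition densities; instead it performs the exponential time-change $X(t)=e^{-t/2}R(e^t-1)$, shows that $X$ solves a time-inhomogeneous SDE whose drift converges (as $t\to\infty$) to the drift of the Ornstein--Uhlenbeck--type process $X^{(\infty)}$ obtained by the same change from $\Bes^2$, and then invokes an \emph{asymptotically homogeneous diffusion} argument (Lemma~\ref{lem:ashom}, via Stroock--Varadhan) together with ergodicity of $X^{(\infty)}$ towards the Rayleigh law. The passage to the limit is closed by a simple tightness argument using the comparison $R\ge\Bes^2$ and the explicit second-moment bound $\E_r[R(t)^2]=r^2+2t(1+1/\ln r)$. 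This dynamical viewpoint also delivers the pointwise ergodic Theorem~\ref{thMarg2} with essentially no extra work.

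Your density route via the $h$-transform identity $p_t^R=\frac{\ln r}{\ln r_0}\,q_t$ is correct, and the decomposition of $\ln r$ is the right idea. The gap is exactly where you flag it: the scaled local limit theorem $\sqrt t\,q_t(r_0,\sqrt t\,u)\sim \frac{2\ln r_0}{\ln t}\,u e^{-u^2/2}$, uniformly enough in $u$ to justify dominated convergence, is asserted but not proved. Your sketch (decouple survival from the endpoint via a short stopping time) is the standard heuristic behind Spitzer's integrated estimate, but promoting it to a density statement with the required uniformity on $(0,\infty)$ is real work---you are evaluating a Dirichlet heat kernel with both the obstacle and the starting point collapsing to the origin, and the near-origin behaviour of $u\mapsto u e^{-u^2/2}$ must emerge from the analysis, not from a crude Gaussian majorant. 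If you pursue this, a cleaner path is to use the symmetry $q_1^{(\rho)}(a,b)/b=q_1^{(\rho)}(b,a)/a$ of the radial killed kernel to move the singular starting point to the target variable, or to cite a Dirichlet heat-kernel expansion for the exterior of a small disk. As written, however, the proposal is a strategy with its key lemma left open, whereas the paper's SDE/tightness argument is complete and avoids density estimates altogether.
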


\begin{theo}[Pointwise ergodic theorem]\label{thMarg2}
For all bounded continuous function $f$ on $(0,\8)$, as $t \to \8$,
\begin{equation}\label{eq:ergodic}
\frac 1t \int_0^{e^t-1} f\left( \frac{R(u)}{\sqrt{1+u}}\right) \frac{1}{1+u}  du \longrightarrow \int_\R f d\nu  \qquad a.s.
\end{equation}
\end{theo}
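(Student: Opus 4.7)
The natural first move is to absorb the time-change already present in \eqref{eq:ergodic}: with $u = e^s - 1$, $du/(1+u) = ds$ and $\sqrt{1+u} = e^{s/2}$, so the left-hand side becomes
$$
\frac{1}{t}\int_0^t f(Y_s)\,ds, \qquad Y_s := \frac{R(e^s-1)}{e^{s/2}},
$$
and the problem reduces to proving $\frac{1}{t}\int_0^t f(Y_s)\,ds \to \int f\,d\nu$ a.s.\ for the rescaled process $Y$.

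Using \eqref{df_Rt} and It\^o's formula one verifies
$$
dY_s = \left(\frac{1}{Y_s\,(s/2+\ln Y_s)} + \frac{1}{2Y_s} - \frac{Y_s}{2}\right)ds + d\widetilde B_s
$$
for some standard Brownian motion $\widetilde B$. For $s$ large the first drift term is $O(1/s)$, so $Y$ is an asymptotic perturbation of the autonomous diffusion
$$
dY_s^\infty = \left(\frac{1}{2Y_s^\infty} - \frac{Y_s^\infty}{2}\right)ds + dB_s,
$$
which is the Lamperti-type time change of $\Bes^2$. A direct integration of the stationary forward equation shows that $Y^\infty$ admits the Rayleigh measure $\nu$ of \eqref{eq:hrayleigh} as unique invariant probability, in agreement with Theorem~\ref{thMarg1}.

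Theorem~\ref{thMarg1} also gives $\IE f(Y_s) \to \int f\,d\nu$ for every bounded continuous $f$, and C\'esaro averaging immediately upgrades this to the \emph{expected} version of \eqref{eq:ergodic}. To reach the almost sure statement I would invoke the renewal structure of Section~\ref{sec:reg}: transporting the record-time regeneration epochs of $R$ to the $s$-scale yields a renewal sequence $0 \leq \sigma_0 < \sigma_1 < \cdots$ along which the excursions of $Y$ form an i.i.d.\ (or at least regenerative) sequence. Applying the strong law for renewal-reward processes to the pairs $\bigl(\sigma_{n+1}-\sigma_n,\ \int_{\sigma_n}^{\sigma_{n+1}} f(Y_s)\,ds\bigr)$ yields an a.s.\ limit of the form $\IE[\,\cdot\,]/\IE[\,\cdot\,]$, which must coincide with $\int f\,d\nu$ by uniqueness of the already-established Ces\`aro-in-mean limit.

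The main obstacle, as I see it, is the integrability of the block statistics in the $s$-scale: the logarithmic tails in the autoregressive representation \eqref{eq:ar} put several quantities associated with the regeneration right at the edge of integrability, so the SLLN has to be applied with care — possibly after truncation and a separate treatment of atypical blocks — relying on the sharp estimates established in Section~\ref{sec:reg}. A secondary technical point is to show that the incomplete block straddling the endpoint $t$ contributes negligibly after division by $t$; since $|f|$ is bounded this reduces to a tail estimate on a single block length.
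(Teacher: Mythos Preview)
Your change of variables and identification of the SDE for $Y$ match the paper exactly (there the rescaled process is denoted $X$). The genuine gap is in the regeneration step. Transporting the renewal times $T_n$ to $\sigma_n=\ln(1+T_n)$ does \emph{not} produce i.i.d.\ blocks for $Y$, nor even a regenerative sequence in the usual sense. At time $\sigma_n$ one has $Y_{\sigma_n}=A_n/\sqrt{1+T_n}$, and a short computation from Theorem~\ref{th:renewal} shows that the post-$\sigma_n$ path of $Y$ is a deterministic functional of $Y_{\sigma_n}$ and of a fresh copy $\tilde R$ of $R$: concretely $Y_{\sigma_n+s}=Y_{\sigma_n}\,\tilde R\!\big((e^{s}-1)/Y_{\sigma_n}^{2}\big)\,e^{-s/2}$. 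Thus the block law depends on the state $Y_{\sigma_n}$, and what you obtain is a Markov chain in that state, not a renewal process. Worse, $Y_{\sigma_n}^{-2}\sim S_n$, and by Remark~\ref{rem:RDE} the chain $S_n$ is \emph{null} recurrent; the embedded chain therefore carries no invariant probability, so neither the renewal--reward SLLN you invoke nor a standard positive-recurrent ergodic theorem is available. The integrability worry you flag about logarithmic tails is in fact a symptom of this null recurrence, not a separate technicality.

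The paper takes a different and much shorter route that avoids Section~\ref{sec:reg} entirely. After reducing to monotone $f$, it uses the couplings of Proposition~\ref{prop:comp} to sandwich $R$ between $\Bes^2$ and (after a finite random time $\sigma$) $\Bes^{2+\delta}$. In the $s$-scale this yields $X^{(\8,2)}(s)\le Y_s\le X^{(\8,2+\delta)}(s)+o(1)$, where both bounding processes are \emph{homogeneous} ergodic diffusions with invariant laws $\nu$ and $\nu_\delta$. The classical pointwise ergodic theorem applies to each of them, trapping $\liminf$ and $\limsup$ of the time average of $f(Y_s)$ between $\int f\,d\nu$ and $\int f\,d\nu_\delta$; letting $\delta\downarrow 0$ finishes.
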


We will prove Theorems \ref{thMarg1}  and \ref{thMarg2} in section 4.

%%%%%%%%%%%%
\section{Regenerative structure} \label{sec:reg}

We fix a parameter $r>1$. We construct a regenerative structure associated with the process $R$ starting from $R(0)=1$.

\subsection{Renewal times}
We define a random sequence $(H_n, A_n, T_n)_{n\geq 0}$ by $ H_0, T_0=0, A_0=1, $ then
$$ 
\left\{ 
\begin{array}{ccl}
H_1&= & \inf\{ t > T_0: R(t)=r \}\\
A_1  &  = &    \inf \{ R(t); t \geq H_1\} \\
 T_1 & =  & \inf \{  t \geq H_1 : R(t)=A_1\}  \\
\end{array}
\right.
$$
and for $n \geq 1$, 
\begin{equation}\label{eq:renewal}
\left\{ 
\begin{array}{ccl}
H_{n+1}&= & \inf\{ t > T_n: R(t)=r A_n\}\\
A_{n+1}  &  = &    \inf \{ R(t); t \geq H_{n+1}\} \\
 T_{n+1} & =  & \inf \{  t \geq H_{n+1} : R(t)= A_{n+1}\}  
\end{array}
\right.
\end{equation}
Since $R$ is a continuous function with $\lim_{t \to \8} R(t)=\8$ a.s., we see by induction that $T_n < \8$ a.s. with 
$T_n < T_{n+1}$ and $\lim_{n \to \8} T_n=\8$ a.s. The  $T_n$ are not stopping times, but they are called {\it renewal 
times} for the following reasons.

\begin{prop}\label{prop:renewal} Let ${\cal G}_1= \sigma\big( T_1, (R(t) \1{t < T_1}; t \geq 0)\big)$. Then, 
$$ 
\left( \frac{R(T_1+A_1^2t)}{A_1}; t \geq 0\right) \quad {\it has\ same\  law \ as\ } R \; {\it and\ is\ independent\ of \ }  {\cal G}_1\;.
$$
\end{prop}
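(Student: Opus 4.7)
The plan is to combine the ordinary strong Markov property of $R$ at the stopping time $H_1$ with an analogue at the future-infimum time $T_1$, the latter \emph{not} being a stopping time. The key feature making this feasible is that a.s.\ $T_1$ is the unique time at which $R$ attains its future infimum $A_1$; in particular $R(s)>A_1$ strictly for $s>T_1$.

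Since $H_1=\inf\{t>0:R(t)=r\}$ is a stopping time, the strong Markov property for the time-homogeneous diffusion $R$ gives that $X:=R(H_1+\cdot)$ has law $P_r$ and is independent of $\mathcal{F}^R_{H_1}$. Setting $A:=A_1$ and $\tau:=T_1-H_1$, it therefore suffices to prove that $(X(\tau+A^2 s)/A)_{s\ge 0}$ has law $P_1$ and is independent of $\sigma(A,\tau,(X(s))_{s\le \tau})$.

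I would identify the conditional law of the post-$\tau$ process via iterated Doob $h$-transforms. Letting $b\to\infty$ in~\eqref{eq:Rsortie}, the probability that $R$ started at $y>a$ never hits level $a$ equals $\varphi_a(y)=\ln(y/a)/\ln y$, so the law $P_r(\,\cdot\mid A>a)$ is the $h$-transform of $R$ (killed at $\tau(a)$) by $\varphi_a$. Since $R$ is itself the $h$-transform of $|W|$ (killed at $\tau(W,1)$) by $h_0(y)=\ln y$, composition yields the harmonic function $h_0(y)\varphi_a(y)=\ln(y/a)$, which is precisely the one defining 2D BM conditioned outside $\B(0,a)$. To handle the non-stopping nature of $\tau$, I approximate by the stopping times $\tau_a:=\inf\{t:X(t)=a\}$: on $\{A<a\}$ one has $\tau_a<\tau$, and $\tau_a\uparrow\tau$ as $a\downarrow A$ by path continuity. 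By strong Markov at $\tau_a$, the shifted process $X(\tau_a+\cdot)$ has law $P_a$, independent of $\mathcal{F}_{\tau_a}$, with its own future infimum still equal to $A$. Applying the $h$-transform identification to this shifted process and passing to the limit $a\downarrow A$ (invoking the entrance-law construction of Definition~2.2 of~\cite{CP20} for the start from the boundary $\partial\B(0,A)$) yields that, conditionally on $\sigma(A,\tau,(X(s))_{s\le \tau})$, the post-$\tau$ process is the norm of a 2D BM conditioned to stay outside $\B(0,A)$ and started from $\partial\B(0,A)$. Finally, Brownian scaling $W'(s):=A^{-1}W(A^2 s)$, under which $|W'|\ge 1$ iff $|W|\ge A$, identifies the rescaled process $(X(\tau+A^2 s)/A)_{s\ge 0}$ as having law $P_1$.

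The main obstacle is the last-passage/non-stopping nature of $\tau$: the strong Markov property does not apply directly, and the events $\{A>a\}$ used to invoke the $h$-transform become null as $a\downarrow A$. A rigorous justification requires careful control of the Radon--Nikodym densities of the nested $h$-transforms and of the convergence to the boundary-starting entrance law of~\cite{CP20}, exploiting the a.s.\ uniqueness of the attaining time $\tau$ for this non-degenerate continuous diffusion.
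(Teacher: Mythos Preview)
Your overall architecture is the same as the paper's: split at the stopping time $H_1$ by strong Markov, then identify the conditional law of the post-minimum process $(R(T_1+\cdot))$ given the pre-minimum path as ``$R$ started at $A_1$ and conditioned to stay $\ge A_1$'', and finish with Brownian scaling. The difference is that the paper does not carry out your approximation-and-$h$-transform argument at all; it simply invokes Williams' path decomposition for one-dimensional diffusions (Theorem~2.4 in \cite{Will74}, see also Lemma~3.9 in \cite{CP20}), which delivers in one stroke exactly the statement you are trying to reach: conditionally on $A_1=a$ and on the pre-minimum path, the post-minimum process is the original diffusion conditioned never to go below $a$, independently of everything before $T_1$.

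What you sketch in your third paragraph is essentially a re-proof of Williams' decomposition in this particular case. Your iterated $h$-transform calculation ($h_0\varphi_a=\ln(\cdot/a)$) is correct and your approximation $\tau_a\uparrow\tau$ is the natural route, but as you yourself note, the passage to the limit $a\downarrow A$ is the whole difficulty: you are conditioning on events of vanishing probability and must control convergence to an entrance law. This is precisely the content of Williams' theorem, so rather than filling in that analysis you should cite \cite{Will74} directly. Once that is done, your proof and the paper's coincide.
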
 
%%%
This proposition is the building brick of the 
%%%%%
\begin{theo}\label{th:renewal} [Renewal structure]
The sequence 
$$
\left( \frac{R(T_n+A_n^2t)}{A_n}; t \in \left[ 0 , \frac{T_{n+1}-T_n}{A_n^2}\right]   \right)_{n \geq 0}
$$
is independent and identically distributed with the law of $(R(t); t \in [0,T_1])$.
\end{theo}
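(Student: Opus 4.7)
The plan is to prove the theorem by induction on $N \geq 1$, using Proposition~\ref{prop:renewal} as the single-step building block. To make the induction go through I strengthen the statement: the first $N$ blocks $\big( R(T_n + A_n^2 t)/A_n;\ t \in [0,(T_{n+1}-T_n)/A_n^2] \big)_{0 \leq n \leq N-1}$ are i.i.d.\ with the law of $(R(t);\ t \in [0,T_1])$, \emph{and} they are jointly independent of the rescaled residual $\widetilde R^{(N)}(t) := R(T_N + A_N^2 t)/A_N$, $t \geq 0$, which itself has the law of $R$.

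The base case $N=1$ is essentially Proposition~\ref{prop:renewal}, once I check that the $0$-th block $(R(t);\ t \in [0,T_1])$ is $\mathcal{G}_1$-measurable. Indeed $(R(t)\1{t < T_1};\ t \geq 0)$ recovers $R$ on $[0,T_1)$, and by continuity $R(T_1) = \lim_{t \uparrow T_1} R(t)$, which equals $A_1$ by the definition of $T_1$.

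For the inductive step I apply Proposition~\ref{prop:renewal} to $\widetilde R^{(N)}$, which by induction has the law of $R$. Reading \eqref{eq:renewal} pathwise on $\widetilde R^{(N)}$, its first-renewal quantities $\widetilde H_1, \widetilde A_1, \widetilde T_1$ satisfy
\[
\widetilde H_1 = \frac{H_{N+1} - T_N}{A_N^2}, \qquad \widetilde A_1 = \frac{A_{N+1}}{A_N}, \qquad \widetilde T_1 = \frac{T_{N+1} - T_N}{A_N^2}.
\]
With these identifications, the first block of $\widetilde R^{(N)}$ coincides exactly with the $N$-th block of the original process, while the residual of $\widetilde R^{(N)}$ after its $\widetilde T_1$ equals $\widetilde R^{(N+1)}$. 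Proposition~\ref{prop:renewal} applied to $\widetilde R^{(N)}$ therefore tells me that the $N$-th block is distributed as $(R(t);\ t \in [0,T_1])$ and is independent of $\widetilde R^{(N+1)}$, which has the law of $R$. Combining this with the induction hypothesis --- that the earlier $N$ blocks are jointly independent of $\widetilde R^{(N)}$, hence of any measurable functional of it --- delivers the strengthened statement at rank $N+1$.

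The claim for the infinite sequence then follows by compatibility of the finite-dimensional distributions (Kolmogorov extension). The main obstacle I anticipate is not the induction mechanism but the bookkeeping in the inductive step: since the $T_n$ are not stopping times of $R$, one cannot shortcut through a strong Markov argument on $R$ itself, and must instead apply Proposition~\ref{prop:renewal} intrinsically to the rescaled residual $\widetilde R^{(N)}$ and verify the identifications of $\widetilde H_1, \widetilde A_1, \widetilde T_1$ by hand from the pathwise recursion \eqref{eq:renewal}.
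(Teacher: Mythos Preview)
Your proof is correct and follows essentially the same approach as the paper's own argument, which simply states that ``by induction, Proposition~\ref{prop:renewal} implies that for all $n$, the process $\big( R(T_n+A_n^2t)/A_n;\ t \geq 0\big)$ is independent of $\mathcal{G}_n=\sigma\big( T_n, (R(t) ; t < T_n)\big)$ with the law of $R$,'' from which the claim follows. You have written out the induction in full detail---verifying the $\mathcal{G}_1$-measurability of the $0$-th block, the pathwise identifications $\widetilde H_1,\widetilde A_1,\widetilde T_1$, and the passage to the infinite sequence---which the paper leaves implicit, but the underlying idea is the same.
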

%%%%%%
 In particular, since $R(T_{n+1})=A_{n+1}$, the sequence 
 $$\left(  \frac{T_{n+1}-T_n}{A_n^2}, \frac{A_{n+1}}{A_n}\right)_{n\geq 0}$$
is i.i.d. and distributed as $(T_1, A_1)$. Therefore $(T_n,A_n)$ can be written using i.i.d.r.v.'s, which will be used repeatedly all through.
%%%%
\begin{proof} Proposition \ref{prop:renewal}. Recall that $P_r$ denotes the law of the process $R$  with $R(0)=r$. Observe that  $H_1$ is a stopping time, and denote by ${\cal F}_{H_1}$ the sigma-field of events that occur before time $H_1$.
By the strong Markov property,
$$
{\it under\ } P_1,\; (R(t+H_1))_{t \geq 0} \; {\it is\ independent\ of }{\cal F}_{H_1} \; {\it and\ has\ the\  law\ } P_r.
$$
Moreover, by Theorem 2.4 in \cite{Will74} (see also the proof of Lemma 3.9 in \cite{CP20}), conditionally on 
$T_1, (R(t); t \in [H_1,T_1])$ and $A_1=a$, 
$(R(T_1+t); t \geq 0)$ has the same law as $R$ starting from $a$ and conditioned to $R(t) \geq a, \forall t\geq 0$. 
By Brownian scaling,
the latter law is equal to that of $aR(\cdot/a^2)$ under $P_1$; see also Remark 2.5 in \cite{CP20}. Since ${\cal G}_1=\sigma( {\cal F}_{H_1};
(R(t);  t\in [H_1,T_1]))$ up to null events, we obtain the desired statement.
\end{proof}
\begin{proof} Theorem \ref{th:renewal}. 
By induction, Proposition \ref{prop:renewal} implies that for all $n$, the process
$\left( \frac{R(T_n+A_n^2t)}{A_n}; t \geq 0  \right)$ is independent of ${\cal G}_n=\sigma\big( T_n, (R(t) ; t < T_n)\big)
$ with the law of $R$. Then, the claim follows.
\end{proof}
As a direct consequence we have discovered a simple representation of crucial times and points of the process.
\begin{cor}\label{cor:renewal}
Define 
\begin{equation}\label{eq:def'} \nn
A_{n+1}'=\frac{A_{n+1}}{A_n} \;,\quad T_{n+1}'=\frac{T_{n+1}-T_n}{A_n^2} \;, \qquad n\geq 0.
\end{equation}
Then, $(A_n',T_n')_{n \geq 1}$ is an i.i.d. sequence with the same law as $(A_1,T_1)$, and we  have the representation
\begin{equation}\label{eq:rep}
\left\{
\begin{array}{ccl}
T_n&=&T_1'+A_1'^2\;T_2' + \ldots + (A_1'\ldots A_{n-1}')^2\;T_n'\\
A_n &=& A_1'\ldots A_n'
\end{array}
\right.\quad , \qquad n \geq 1.
\end{equation}
\end{cor}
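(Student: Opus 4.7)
The plan is to read the corollary as a direct unpacking of Theorem \ref{th:renewal}. The theorem already tells us that the rescaled path segments
$$\Big( R(T_n + A_n^2 t)/A_n;\ t \in [0,(T_{n+1}-T_n)/A_n^2] \Big)_{n \geq 0}$$
form an i.i.d.\ sequence with the common law of $(R(t); t \in [0,T_1])$. So the first step is simply to express $A_{n+1}'$ and $T_{n+1}'$ as measurable functionals of the $n$-th such segment. For $T_{n+1}'$, this is immediate since $T_{n+1}'$ is exactly the length of the defining interval. For $A_{n+1}'$, I would observe that at time $T_{n+1}$ the process $R$ equals $A_{n+1}$ by definition in \eqref{eq:renewal}, so
$$A_{n+1}' = \frac{A_{n+1}}{A_n} = \frac{R(T_{n+1})}{A_n} = \frac{R(T_n + A_n^2 \cdot T_{n+1}')}{A_n},$$
which is a functional of the $n$-th rescaled path at its terminal time. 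Comparing with the $n=0$ case (recall $T_0 = 0$, $A_0 = 1$), the same functional applied to the 0-th segment returns $(A_1, T_1)$, giving the claimed common marginal law.

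The second step is the two representations in \eqref{eq:rep}. The formula $A_n = A_1' \cdots A_n'$ is a telescoping product using $A_0 = 1$. For $T_n$, I would telescope the increments
$$T_n = \sum_{k=1}^n (T_k - T_{k-1}) = \sum_{k=1}^n A_{k-1}^2\, T_k',$$
and substitute $A_{k-1} = A_1' \cdots A_{k-1}'$ (with the empty product equal to $1$ for $k=1$), yielding exactly the displayed formula.

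There is essentially no obstacle here, since both claims are formal consequences of Theorem \ref{th:renewal} and the definitions of $H_n, A_n, T_n$ in \eqref{eq:renewal}. The only subtlety worth pointing out is that one must verify that $A_{n+1}$ is measurable with respect to the $n$-th rescaled segment --- this uses the definition of $A_{n+1}$ as $\inf\{R(t): t \geq H_{n+1}\}$ together with the fact that this infimum is in fact attained at $T_{n+1}$, hence determined by the segment restricted to $[T_n, T_{n+1}]$. Once this is noted the corollary is proved.
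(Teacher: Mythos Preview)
Your proof is correct and follows the same approach as the paper, which treats the corollary as an immediate consequence of Theorem \ref{th:renewal}. The paper's only explicit remark is the observation $R(T_{n+1})=A_{n+1}$, from which the i.i.d.\ property of $\big((T_{n+1}-T_n)/A_n^2,\ A_{n+1}/A_n\big)_{n\geq 0}$ follows; you have simply written this out in more detail.
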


\subsection{Description of a cycle}
Recall $r>1$ is fixed. We will shorten the notations: $(H,A,T)=(H_1,A_1,T_1)$.  Recall that $R$ starts from $R(0)=1$, hits $r$ at $H$ for the first time, and reaches its 
future minimum $A \in (1,r)$ at time $T$. We also introduce its maximum $B>r$ on the time interval $[H,T]$, as well as their logarithms
$U,V$:
\begin{equation}\nn
\left\{
\begin{array}{ccccc}
  A &  = & r^U &=& \min \{ R(t); t \geq H\} \\
 B & =  & r^V & = & \max \{ R(t); t \in [H, T]\}
\end{array}
\right.
\end{equation}
see figure \ref{fig:1}.
%%%%%
\begin{figure}
\begin{center}
\includegraphics[width=80mm, height=70mm]{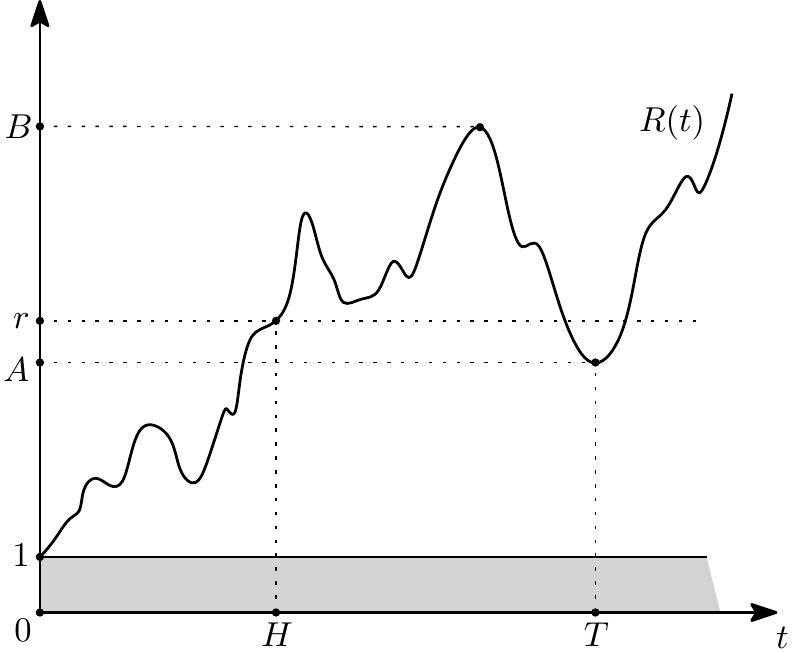}
\caption{First cycle: $A=r^U, B=r^V$}
\label{fig:1}
\end{center}
\end{figure}
It was shown in \cite{CP20} that $U$ is uniform on [0,1] (see \eqref{eq:Rsortie} with $b \to \8$), but we can even compute the joint law of $U$ and $V$.
For $1 < a-h< a< r<b$, we have by the strong Markov property
\begin{eqnarray}\nn
\IP\big(A\!\in\! [a\!-\!h,a),  B\!>\!b\big) 
&=&\IP\big(A\!\in\! [a\!-\!h,a),  B\!>\!b, \tau(b) \! < \! \tau(a)\big) + \IP\big(A\!\in\! [a\!-\!h,a),  B\!>\!b, \tau(b) \! > \! \tau(a)\big) \\ \nn
&=& P_r\big( \tau(b)<\tau(a)\big) \times P_{b}\big( \min\{R(t); t \geq 0\} \in [a-h,a)\big) + o(h)\\
&=& \frac{\ln (r/a)  \ln b}{\ln (b/a) \ln r} \times %%  \left(\frac{1}{a\ln b} h + o(h) \right) \nn
\frac{1}{a\ln b} h + o(h) \nn
\end{eqnarray}
using (2.16) in \cite{CP20}  and  that,
for $R$ started at $b$, $ \min\{R(t); t \geq 0\} $ has density $(a \ln b)^{-1}$ on $(1,b)$. 
Hence $(A,B)$ has a density
given by the negative of the $b$-derivative of the dominant term as $h \searrow 0$, i.e.,
$$
p_{A,B}(a,b) = \frac 1{ab \ln r}\; \frac{\ln (r/a)}{\ln^2(b/a)}\;,\quad 1<a<r<b.
$$
By changing variables, it follows that $(U,V)$ has density 
\begin{equation}\label{eq:denUV}
p_{U,V}(u,v) = \frac{1-u}{(v-u)^2} \1{0<u<1<v}
\end{equation}
We recover that $U$ is uniform on (0,1) and that $V$ has density
$$
p_V(v)=-\ln\big(1- 1/v\big)-1/v  \;, \quad v>1.
$$
It follows that for $v \geq 1$,
\begin{equation}
\label{eq:tailV}
\IP(V>v) = \sum_{n=1}^\8 \frac 1{n(n+1)v^n}  
\;,
\end{equation}
and then $\IP(V>v) \sim1/({2v})$ as $ v \to \8$.

%%%%%%%%%%%

\medskip

We also need information on the cycle length $T$.
For any $s\geq 1$ we consider the hitting time by $R$ starting at $s$ of its absolute minimum,
and denote by $\mu_s$ a r.v. with the same law:
\begin{equation} \nn %\label{def:mu_x}
\mu_s \sim P_s\big( \arg \min \{R(t); t \geq 0\} \in \cdot\big)
\end{equation}
Recall that, under $\IP$,  $R(0)=1$.\\
%%%
\begin{prop}\label{prop:mu_x} 
(i) We have $$ T= H + (T-H)\;,$$
where $ H $ and $ (T-H)$ are independent with $T-H \eqlaw \mu_r$.\\
(ii) For $u \in (0,1)$, the conditional law of $T$ given $U\geq u$ is equal to the %(unconditional) 
law of an independent sum 
$H + r^{2u} \mu_{(r^{1-u})}$.
\end{prop}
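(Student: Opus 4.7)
The plan is to reduce both statements to the strong Markov property of $R$ at the stopping time $H$, combined with a scaling / Doob $h$-transform identity for the conditioned planar Brownian motion $\hW$. Since $H = \inf\{t \geq 0: R(t) = r\}$ is a stopping time for the natural filtration of $R$, the strong Markov property yields that $(R(H+t))_{t \geq 0}$ is independent of ${\cal F}_H$ and has law $P_r$.

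For part (i), I would simply observe that $T - H = \inf\{t \geq 0 : R(H+t) = \inf_{s \geq 0} R(H+s)\}$ is a measurable functional of the shifted process, and by the very definition of $\mu_r$ its $P_r$-law is $\mu_r$. Independence from $H$ is automatic since $H$ is ${\cal F}_H$-measurable. This is a one-line consequence once the strong Markov framework is set up.

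For part (ii), the event $\{U \geq u\} = \{\inf_{s \geq 0} R(H+s) \geq r^u\}$ is again measurable with respect to the shifted process, hence independent of $H$. It therefore suffices to prove that under $P_r$, conditionally on $\{\min_{t \geq 0} R(t) \geq r^u\}$, the argmin time is distributed as $r^{2u}\, \mu_{r^{1-u}}$. I would establish this by a scaling identity: since $R = |\hW|$ with $\hW$ obtained from planar Brownian motion by the Doob transform with $h(x) = \ln|x|$, further conditioning $\hW$ started with $|\hW(0)| = r$ to avoid $\B(0, r^u)$ amounts to an iterated Doob transform, now via the positive harmonic function $\ln(|x|/r^u)$ on $\{|x| > r^u\}$. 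The Brownian scaling $\tilde W(t) := r^{-u}\, \hW(r^{2u} t)$ maps this twice-conditioned process onto a standard conditioned motion starting from a point of norm $r^{1-u}$, i.e.\ whose norm has law $P_{r^{1-u}}$. Consequently the argmin time of $R$ under the conditional law equals $r^{2u}$ times the argmin of an independent $P_{r^{1-u}}$-process, which by definition has law $\mu_{r^{1-u}}$.

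The step I expect to require the most care is the Doob/scaling identification in part (ii): one must justify that composing the $h$-transform by $\ln|x|$ with the further transform corresponding to avoidance of $\B(0, r^u)$ coincides, up to the Brownian rescaling $(t,x) \mapsto (r^{2u} t, r^u x)$, with the plain $\ln|x|$-conditioning of a Brownian motion starting from a point of norm $r^{1-u}$. An alternative route is to invoke the Williams-type path decomposition of Theorem 2.4 in \cite{Will74}, already used in the proof of Proposition \ref{prop:renewal}, to describe the law of the trajectory on $[H, \infty)$ conditionally on its future infimum and then specialise to $\{A \geq r^u\}$; the $h$-transform viewpoint nevertheless seems more direct and makes the prefactors $r^{2u}$ and $r^{1-u}$ appear automatically.
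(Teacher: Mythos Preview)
Your proposal is correct and follows essentially the same route as the paper. Part (i) is identical; for part (ii) the paper invokes the scaling identity $R^c(\cdot)\eqlaw cR(\cdot/c^2)$ from Remark~2.5 of \cite{CP20} (noting parenthetically that it comes from $R$ being the norm of conditioned Brownian motion together with Brownian scaling), which is exactly the iterated Doob-transform computation you outline: the composite $h$-function is $\ln|x|\cdot\frac{\ln(|x|/r^u)}{\ln|x|}=\ln(|x|/r^u)$, and the rescaling $(t,x)\mapsto(r^{2u}t,r^ux)$ turns this into the standard $\ln|\cdot|$-conditioning started from norm $r^{1-u}$. The only cosmetic difference is that the paper packages the conclusion via characteristic functions.
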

%%%%%%%
\begin{proof}
(i) directly follows from the strong Markov property for the Markov process $R$ and the stopping time $H$.

For (ii), we recall Remark 2.5 in \cite{CP20}: for $c>1$, denoting by $R^c$
the diffusion $R$ conditioned to stay outside $(1,c]$ and started at $c$, we have 
\begin{equation}\nn
R^c(\cdot) = c R(\cdot/c^2) \qquad {\rm in\ law.}
\end{equation}
(Alternatively, this follows from $R$ being the norm of conditioned Brownian motion \eqref{eq:MBconditionne} 
and from Brownian scaling.) Hence, for $s \in \R$, again from the strong Markov property,
\begin{eqnarray}\nn 
E_1[ e^{i s T } \big \vert U \geq u ] &=& E_1[ e^{i s (T-H + H) } \big \vert U \geq u ]
\\ &=&  E_1[ e^{i s  H} ] \times E_r[ e^{i s (T-H) } \big \vert U \geq u ] \nn \\ \nn
&=&  E_1[ e^{i s  H } ] \times E_r[ e^{i s \times \arg \min \{R(t); t \geq 0\} } \big \vert 
%\min_{[0,\8)} 
\min\{R(t); t \geq 0\} \geq r^u ] \\
\nn
&=& E_1[ e^{i s  H } ] \times E[ e^{i s r^{2u} \mu_{(r^{1-u})}}]
\end{eqnarray}
which proves the result.
\end{proof}
%%%%%%%%%%%%

\subsection{Tail estimates for $T$}
  
%  {\it c'est queue-de-T.tex et aussi sect.4.3 de "GVde$M_t$"}
  
  We need some estimates of the upper and lower tails of $T$, that we derive in this section. But first we state elementary comparisons of $R$ and Bessel processes, see \eqref{def:Bes}, that will be used all through the paper.
 \begin{prop} \label{prop:comp}  
(i) There exists a coupling of the processes $R$ and $\Bes^2$ starting at 1 such that 
\begin{equation}\nn   \forall t \geq 0, \quad
R(t) \geq \Bes^2(t) \;.
\end{equation}
(ii) For $\delta>0$ there exists a coupling of the processes $R$ and $\Bes^{2+\delta}$ starting at 1 such that for $\sigma= \sup \{ t \geq 0; R(t) \leq e^{2/\delta} \}$,
\begin{equation}\nn   \forall s \geq 0, \quad
R(\sigma+s) \leq \Bes^{2+\delta}(\sigma+s) - \Bes^{2+\delta}(\sigma) + e^{2/\delta}\;.
\end{equation}
\end{prop}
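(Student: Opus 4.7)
The plan is to deduce both parts from the standard pathwise comparison theorem for one-dimensional SDEs driven by the same Brownian motion, whose diffusion coefficient $1$ is trivially Hölder-$1/2$ (Ikeda-Watanabe, or Revuz-Yor Thm IX.3.7). In each case the only ingredient is a pointwise comparison between the drift $b_R(r) := \frac{1}{r \ln r}+\frac{1}{2r}$ of $R$ and the drift $\frac{d-1}{2r}$ of the relevant Bessel process.

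For part (i) I would couple $\Bes^{2}$ and $R$ via the same driving Brownian motion $B$, both starting from $1$. For every $r>1$, $b_R(r) = \frac{1}{r\ln r} + \frac{1}{2r} \geq \frac{1}{2r}$, so the drift of $R$ dominates that of $\Bes^{2}$ everywhere on $(1,\infty)$. Since neither process revisits the boundary of its state space with positive probability once it has left (for $R$ this is transience outside the unit ball; for $\Bes^2$ the origin is polar), the comparison theorem applies on every interval $[\varepsilon, T]$ with $\varepsilon\downarrow 0$, yielding $R(t)\ge \Bes^{2}(t)$ for all $t\ge 0$ a.s.

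For part (ii) the key observation is that when $r\ge e^{2/\delta}$ we have $\ln r\ge 2/\delta$, so
\[
b_R(r)=\frac{1}{r\ln r}+\frac{1}{2r}\le \frac{\delta/2}{r}+\frac{1}{2r}=\frac{1+\delta}{2r},
\]
which is precisely the drift of $\Bes^{2+\delta}$. By the very definition of $\sigma$ we have $R(\sigma+s)\ge e^{2/\delta}$ and $R(\sigma)=e^{2/\delta}$ for all $s\ge 0$. I would then define on $[\sigma,\infty)$ the process $(Z(\sigma+s))_{s\ge 0}$ as the Bessel$^{\,2+\delta}$ diffusion starting at $e^{2/\delta}$ and driven by the shifted Brownian motion $(B(\sigma+s)-B(\sigma))_{s\ge 0}$. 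Both $Z(\sigma+\cdot)$ and $R(\sigma+\cdot)$ start from $e^{2/\delta}$, and whenever they coincide the drift of $R$ is at most that of $Z$ thanks to the bound above; the comparison theorem then gives $R(\sigma+s)\le Z(\sigma+s)$ for all $s\ge 0$ a.s. To phrase this in the notation of the statement, I would now extend $Z$ to a bona fide Bessel$^{\,2+\delta}$ process $\Bes^{2+\delta}$ defined on all of $\R_+$ and starting at $1$: on $[0,\sigma]$ use any independent Bessel$^{\,2+\delta}$ bridge ending at some value $V$ at time $\sigma$, and on $[\sigma,\infty)$ take $\Bes^{2+\delta}(\sigma+s):=Z(\sigma+s)+V-e^{2/\delta}$. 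The difference $\Bes^{2+\delta}(\sigma+s)-\Bes^{2+\delta}(\sigma)=Z(\sigma+s)-e^{2/\delta}$ is independent of this cosmetic shift, and the required inequality is precisely $R(\sigma+s)\le Z(\sigma+s)=\Bes^{2+\delta}(\sigma+s)-\Bes^{2+\delta}(\sigma)+e^{2/\delta}$.

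The mildly delicate points I expect are two. First, since $\sigma$ is a last-exit time rather than a stopping time, one cannot invoke the strong Markov property at $\sigma$; but this is irrelevant here because the Brownian motion $B$ driving $R$ is defined on $[0,\infty)$ and we are free to use its increments after $\sigma$ to drive a new SDE by hand. Second, the comparison theorem requires drift comparison along the path, and $b_R(r)\le \frac{1+\delta}{2r}$ only for $r\ge e^{2/\delta}$; the content of the argument is that the constraint $R(\sigma+\cdot)\ge e^{2/\delta}$ puts $R$ exactly where the drift comparison is valid, so the comparison never fails at a crossing time. The shift $V-e^{2/\delta}$ in the construction of $\Bes^{2+\delta}$ is what absorbs the discrepancy between the natural starting point $1$ required by the statement and the natural starting point $e^{2/\delta}$ needed for the coupling.
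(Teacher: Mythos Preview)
Your argument for (i) is correct and essentially identical to the paper's: both couple via the same driving Brownian motion and use the pointwise drift inequality $b_R(r)\ge\frac{1}{2r}$ on $(1,\infty)$. The paper writes out the differential $d(\Bes^2-R)^+\le 0$ by hand rather than invoking the named comparison theorem, but this is the same idea.

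For (ii) there is a genuine gap in your final ``extension'' step. The comparison $R(\sigma+s)\le Z(\sigma+s)$ is fine, but your construction of $\Bes^{2+\delta}$ on all of $[0,\infty)$ does not work: setting $\Bes^{2+\delta}(\sigma+s):=Z(\sigma+s)+V-e^{2/\delta}$ does \emph{not} yield a Bessel process, because Bessel processes are not translation-invariant --- the drift $\frac{1+\delta}{2x}$ depends on the position $x$, not merely on increments. Hence your glued process is not a coupling of $R$ with an actual $\Bes^{2+\delta}$ started at $1$, and the statement as written is not established. (There is a second obstruction even if you chose $V=e^{2/\delta}$: since $\sigma$ is not a stopping time, the post-$\sigma$ increments of $B$ are not independent of $\sigma$, so $Z$ need not have the law of a Bessel process conditionally on $\sigma$, and splicing in an independent Bessel bridge on $[0,\sigma]$ will not produce a global Bessel process either.)

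The paper avoids this difficulty by coupling $R$ and $\Bes^{2+\delta}$ with the \emph{same} Brownian motion from time $0$, both started at $1$. Then $\Bes^{2+\delta}$ is genuinely a Bessel process on all of $[0,\infty)$ by construction, and one only invokes the drift inequality $b_R(r)\le\frac{1+\delta}{2r}$ (valid for $r\ge e^{2/\delta}$) after time~$\sigma$, via the differential inequality $d(R-\Bes^{2+\delta})^+\le 0$ for $t\ge\sigma$. Integrating from $\sigma$ to $\sigma+s$ and using $R(\sigma)=e^{2/\delta}$ gives the stated bound. This is the natural fix to your argument: instead of building the Bessel process outward from the random time $\sigma$, build it globally from time $0$ and apply the comparison only after~$\sigma$.
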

\begin{proof}
It is well known \cite{Ch00} that the stochastic differential equation \eqref{def:Bes} has a strong solution, so we can couple the processes $R$ and $\Bes^2, \Bes^{2+\delta}$ by driving equations \eqref{df_Rt} and \eqref{def:Bes} by the same Brownian motion $B$. Then, with $x^+=\max\{x,0\}$ for $x$ real, we have for all $t>0$ and all realization of $B$, 
\begin{eqnarray}\nn
d \big( \Bes^2(t) - R(t) \big)^+ &=& {\mathbf 1}_{\{   \Bes^2(t) \geq R(t)\}} \left( \frac 1{2\Bes^2(t)} - \frac 1{2R(t)} -\frac 1{R(t)\ln R(t)}\right) dt \\ \nn
&\leq& 0
\end{eqnarray}
which implies (i) by integration. Similarly for (ii) we write the differential
\begin{eqnarray}\nn
d \big( R(t) - \Bes^{2+ \delta}(t)  \big)^+ &=& {\mathbf 1}_{\{   \Bes^{2+\delta}(t) \leq R(t)\}} \left( \frac 1{2R(t)} +\frac 1{R(t)\ln R(t)} -  \frac {1+\delta}{2\Bes^{2+\delta}(t)} \right) dt \\ \nn
&\leq& 0 \qquad {\rm for\ } t \geq \sigma.
\end{eqnarray}
Integrating on $t\in [\sigma, \sigma+s]$ we obtain (ii). 
\end{proof}

We are now ready to start with the upper tail of $T$.

\begin{prop} \label{prop:tailT}  
As $t \to \8$,
\begin{equation}\label{eq:queuemine}
\IP( T \geq t) \sim \frac{\ln r}{\ln t}\;.
\end{equation}
More precisely, there exists constants $t_0$ and $C$ such that for all $t \geq t_0$, 
\begin{equation}\label{Queueminp}
    \left( 1- \frac{ \ln_3 t+C}{\ln t}\right) \frac{\ln r}{\ln t} \leq \mathbb{P}[T\geq t]\leq  \left( 1+ \frac{\ln_3 t+C}{\ln t}\right) \frac{\ln r}{\ln t}.
\end{equation}
\end{prop}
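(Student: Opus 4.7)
The plan is to reduce $T$ to the hitting time $\mu_r$ of the running minimum (for $R$ started at $r$), extract an integral identity via the strong Markov property at time $t$, and evaluate it using the Rayleigh limit of Theorem \ref{thMarg1} together with the Bessel comparisons of Proposition \ref{prop:comp}.

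For the reduction, Proposition \ref{prop:mu_x}(i) gives $T=H+\widetilde\mu$ with $H$ independent of $\widetilde\mu\sim\mu_r$; by Proposition \ref{prop:comp}(i), $H$ is dominated by the exit time of a $\Bes^2$ (equivalently planar Brownian motion) from the disk of radius $r$, which has exponential tail. Hence $\IP(T\ge t)=\IP_r(\mu_r\ge t)+O(e^{-ct})$. Now take $R$ starting at $r$ and set $m_1(t)=\min_{s\le t}R(s)$, $M(t)=\min_{s\ge t}R(s)$. Then $\{\mu_r\ge t\}=\{M(t)\le m_1(t)\}$. Letting $b\to\infty$ in \eqref{eq:Rsortie} yields $\IP_x(\inf R\le a)=\ln a/\ln x$ for $1<a<x$, and the strong Markov property at time $t$, together with $m_1(t)\le R(t)$, produces the key identity
$$
\IP_r(\mu_r\ge t)=\E_r\!\left[\frac{\ln m_1(t)}{\ln R(t)}\right].
$$

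To extract the asymptotics I split $\ln m_1(t)=\ln A+(\ln m_1(t)-\ln A)$ with $A=\inf R$, which has density $1/(a\ln r)$ on $(1,r)$ and satisfies $\E_r[\ln A]=\int_1^r(\ln a)/(a\ln r)\,da=\ln r/2$. The remainder is supported on $\{\mu_r>t\}$ and bounded by $\ln r$, so Cauchy--Schwarz combined with the naive a priori bound $\IP(\mu_r\ge t)=O(1/\ln t)$ (from $\ln m_1(t)\le\ln r$) and a second moment estimate $\E[1/\ln^2 R(t)]=O(1/\ln^2 t)$ contributes only $O(1/\ln^{3/2}t)$. For the main term, condition on $A$: by the Brownian scaling relation $R^c(\cdot)=cR(\cdot/c^2)$ recalled after Proposition \ref{prop:mu_x}, after the minimum time $R$ behaves like a rescaled fresh copy of $R$, so $\ln R(t)=\tfrac12\ln t+O(1)$; Theorem \ref{thMarg1} together with the uniform integrability granted by the sandwich $\Bes^2\le R\le \Bes^{2+\delta}+O(1)$ of Proposition \ref{prop:comp} allows expansion $1/\ln R(t)=(2/\ln t)(1+O(1/\ln t))$, yielding $\E_r[\ln A/\ln R(t)]\sim\E_r[\ln A]\cdot 2/\ln t=\ln r/\ln t$ and proving \eqref{eq:queuemine}.

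The hard part will be the sharper bound \eqref{Queueminp} with its explicit $\ln_3 t/\ln t$ error. The delicate obstacle is the integrability of $1/\ln R(t)$ as $R(t)\downarrow 1$: the crude hitting-probability bound $\IP_r(R(t)\le a)\le \ln a/\ln r$ is too weak at small $a$. I expect to control this via a refined density estimate $p_t^R(r,y)\lesssim(\ln y)^2/t$ for $y$ close to $1$, obtained from viewing $R=|\hW|$ as derived from Doob's $h$-transform of planar Brownian motion with $h(x)=\ln|x|$ killed on the unit circle: the density then inherits one $\ln y$ factor from the Dirichlet boundary behaviour of the killed kernel and another from the $h$-transform. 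A layer-cake decomposition of $\E[1/\ln R(t)]$ at a cutoff $a_t=\exp(c\ln_2 t)$, balanced against the Gaussian concentration of $\Bes^2(t)/\sqrt t$ on the small-value side, should then produce the $\ln_3 t$ factor; matching the upper and lower constants via the two-sided Bessel comparison gives the required two-sided bound.
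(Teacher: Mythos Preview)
Your integral identity $\IP_r(\mu_r>t)=\E_r[\ln m_1(t)/\ln R(t)]$ is correct and elegant, but the subsequent analysis breaks down at a basic point: the process $1/\ln R(t)$ is a \emph{positive martingale} (this is recalled in the paper, eq.~\eqref{eq:mart9}), so $\E_r[1/\ln R(t)]=1/\ln r$ for every $t$. By Jensen this forces $\E_r[1/\ln^2 R(t)]\ge 1/\ln^2 r$, a fixed positive constant, so your claimed bound $\E_r[1/\ln^2 R(t)]=O(1/\ln^2 t)$ is impossible and the Cauchy--Schwarz step gives only $O(1/\sqrt{\ln t})$, far too weak. The same obstruction kills the ``main term'': you argue via the Rayleigh limit that $1/\ln R(t)=(2/\ln t)(1+O(1/\ln t))$ in expectation, but this is exactly what the martingale identity forbids. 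The underlying phenomenon is that $R(t)/\sqrt t$ converges in law but $1/\ln R(t)$ is \emph{not} uniformly integrable: its $L^1$ mass is carried by the rare event $\{R(t)\text{ close to }1\}$, where $\ln A$ and $\ln m_1(t)$ are both small, so neither the Rayleigh limit nor the Bessel sandwich (which only holds after the random time $\sigma$) lets you pass to the limit inside $\E_r[\,\cdot\,/\ln R(t)]$.

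It may be possible to salvage the identity by a careful truncation that isolates the contribution of $\{R(t)\text{ near }1\}$ and exploits the correlation between $\ln A$ and this event, but that is a different and substantially harder argument than what you wrote. The paper avoids this difficulty entirely: it never looks at $R(t)$ at the fixed time $t$, but instead conditions on the value of $V$ (the logarithmic maximum of $R$ on $[H,T]$), whose tail $\IP(V>v)\sim 1/(2v)$ is computed explicitly from the joint density \eqref{eq:denUV}. Splitting on $\{V\gtrless \ln t/(2(1\pm\eps)\ln r)\}$ reduces the problem to (i) the tail of $V$ and (ii) confinement/escape probabilities for $R$ in a ball of radius $t^{1/(2(1\pm\eps))}$, which are handled by the $\Bes^2$ comparison and moderate deviations and are exponentially small. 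Choosing $\eps\asymp(\ln_3 t)/\ln t$ then produces the $\ln_3 t$ correction in \eqref{Queueminp} directly.
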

%%%%%%\subsection{Right tail  for $\mu_r$}
\begin{proof} We first obtain two preliminary estimates.\\

{\it Upper bound}: for $0<\eps<1$,
\begin{eqnarray}\nn
\IP(T\geq t)&=& \IP\left(T\geq t, V \geq \frac{\ln t}{2(1 \!+\! \eps)\ln r}\right) \!+\! \IP\left( T \geq t, V < \frac{\ln t}{2(1\!+\! \eps)\ln r}\right) \\ \nn
&\leq & \IP\left(V \geq \frac{\ln t}{2(1+\eps)\ln r}\right) + \IP\left( R(s) \leq t^{\frac{1}{2(1+\eps)}}, s \in [0,t]\right) \\ \label{eq:tailmu1}
&\leq & \frac {(1+\eps) \ln r}{\ln t} +\frac15 \left(\frac {2(1+\eps) \ln r}{\ln t}\right)^2 + 
C_0\exp\left(-C_1t^{\eps/(1+\eps)}\right) 
%\\ \nn &\leq& \frac {(1+\eps)\ln r}{\ln t} 
\end{eqnarray}
for $t \geq t_1$ with $t_1>0$ not depending on $\eps \in (0,1)$. Indeed, to obtain the first term we have used \eqref{eq:tailV}  in the form of $\IP(V\geq v) \leq (1/2v) + (1/5v^2)$ for large $v$. In order to obtain the second one, we first  bound $R(\cdot) \geq \Bes^{2}(\cdot)$, with $\Bes^2$ started at 0 using 
%that  the two diffusions have same constant diffusion coefficient, but  the last one has a smaller drift and has a strong solution (see \cite{Ch00})
Proposition \ref{prop:comp}, and finally that there exist positive
$C_0, C_1$ such that
\begin{equation}\label{eq:bmpo}
\forall t>0, \forall \rho > 0, \quad \IP \big( \Bes^{2}(s) \leq \rho, s \in [0,t]\big) \leq C_0 \exp\big(-C_1 \frac t{\rho^2} \big)\;,
\end{equation}
 see e.g. exercise 1 p.106 in \cite{Sz98}.

\medskip

{\it Lower bound}: for $0<\eps<1/2$,
\begin{eqnarray}\nn
\IP(T \geq t) &\geq& \IP\left(T-H \geq t, V \geq \frac{\ln t}{2(1-\eps)\ln r}  \right)\\ \nn
&=&  \IP\left( V > \frac{\ln t}{2(1-\eps)\ln r}  \right) - \IP\left(T-H \leq t, V \geq  \frac{\ln t}{2(1-\eps)\ln r}\right) \\ \nn
&\geq&  \IP \left( V > \frac{\ln t}{2(1-\eps)\ln r}  \right) - P_r\left( %\exists s \in [0,t]:  R(s) \geq  t^{\frac{1}{2(1-\eps)}}
\tau(R, t^{\frac{1}{2(1-\eps)}}) \leq t
\right)\\ \label{eq:tailmu2}
&\geq &  \frac {(1-\eps)\ln r}{\ln t}
- C_2\exp\left(-C_3t^{\eps/(1-\eps)}\right)
\end{eqnarray}
for $t \geq t_2$, with $t_2>0$ not depending on $\eps \in (0, \frac12)$. In \eqref{eq:tailmu2} we have used \eqref{eq:tailV} for the first term,  and we give details for the second one: 
for $|x|=r>1$ 
by \eqref{eq:MBconditionne}, 
% by Lemma  2.1 and formula (2.2)  in \cite{CP20}, denoting by $\tau^{\Bes^2}(\rho)$ the hitting time of $\rho$ by BES$^2$, 
we get for all $t>1$,
\begin{eqnarray}\nn
 P_r\big( \tau(R, t^{\frac{1}{2(1-\eps)}}) \leq t
\big) 
  &=&  
  \Psf_x \Big( \tau(|W|, t^{\frac{1}{2(1-\eps)}}) \leq t \;
   \big\vert\;  \tau (|W|,  t^{\frac 1{2\!(\!1\!-\!\eps)}}) \!<\!    \tau (|W|,1)  \Big)\\ \nn
  &\leq &
  \Psf_x\left(  \tau(|W|, t^{\frac{1}{2(1-\eps)}}) \leq t
   \right) \times \frac{ \ln t}{2(1-\eps) \ln r} 
  \\ \nn
  &\leq  & C_2\exp\left(-C_3 t^{\eps/(1-\eps)}\right)
\end{eqnarray}
 for some constant $C_2, C_3>0$ by  the moderate deviation principle for Brownian motion.
%%%%%

\medskip

For both the upper and lower bounds, we now choose $$\eps=\eps_t= \; \frac{\ln_3 t + C_4}{\ln t}$$ with a  constant $C_4$. Provided the constant $C_4$ is large enough, the terms $C_0\exp\left(-C_1t^{\eps_t/(1+\eps_t)}\right)$ and 
$C_2\exp(-C_3t^{\eps_t/(1-\eps_t)})$ are dominated by $(\ln t)^{-2}$.
We then get \eqref{Queueminp} from  \eqref{eq:tailmu1} and \eqref{eq:tailmu2}, taking any $C>C_4+\frac{4 \ln r}{5}$.

Finally, \eqref{eq:queuemine} is a direct consequence of \eqref{Queueminp}. The proof is complete.
\end{proof}
%%%%%
We also need to control the lower tail of $T$.
%%%
%
%
%
%%%%%%%%%%%%%%%%%%%%%
\begin{prop} \label{prop:LtailT}
(i) For all $\eps \in (0, r-1)$, there exists $t_0>0$ such that for $t \leq t_0$,
\begin{equation}\label{eq:LtailT0}
    \mathbb{P}[ T\leq t]
 %    =\mathbb{P}[ T\leq t_n r^{2i}]
    \leq  \exp\left(-\frac{(r-1-\eps)^2}{2t}\right)\;.
\end{equation}
(ii) 
For all $\eps>0$, there exists $t_1>0$ such that for $t \leq t_1$,
%There exist $t_0>0$ and $\rho>0$ such that for all $t \in (0, t_0]$ 
and all $u \in [0,1)$,
\begin{equation}\label{eq:LtailT}
\IP\big[ T \leq t | U \geq u\big] \geq  %\exp(-\frac \rho t)\;.
 \exp\left(-\frac{(r-1+\eps)^2}{2t}\right)\;.
\end{equation}
\end{prop}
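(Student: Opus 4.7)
For part (i), since $T \ge H$, it suffices to bound $\IP[H \le t]$, the probability that $R$ started from $1$ reaches $r$ by time $t$. The plan is to dominate $V := R-1$ pathwise by a three-dimensional Bessel process $X$ started from $0$: driving both by the same Brownian motion and applying the one-dimensional SDE comparison theorem, $V \le X$ follows from the bound $g(1+v) \le 1/v$ on the drift $g(1+v) = \frac{1}{(1+v)\ln(1+v)} + \frac{1}{2(1+v)}$ of $V$. Clearing denominators reduces this to $\ln(1+v) \ge \tfrac{2v}{2+v}$, which follows from $f(v) := \ln(1+v) - \tfrac{2v}{2+v}$ satisfying $f(0)=0$ and $f'(v) = v^2/[(1+v)(2+v)^2] \ge 0$. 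Thus $\IP[H \le t] \le \IP[\sup_{s\le t} X(s) \ge r-1]$, and since $X$ is the norm of 3D Brownian motion from the origin, standard strong-Markov arguments (reducing the supremum to the one-point probability using that $\Bes^3$ from $a$ stays $\ge a$ with probability $\ge 1/2$) combined with the estimate $\IP[|W_3(t)| \ge a] \le C(a/\sqrt t)\,e^{-a^2/(2t)}$ from the $\chi^2_3$ density of $|W_3(t)|^2/t$ give $\IP[H \le t] \le C(r-1)t^{-1/2}\exp(-(r-1)^2/(2t))$; for $t$ small enough the polynomial prefactor is absorbed into the loss from $(r-1)^2$ to $(r-1-\eps)^2$.

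For part (ii), Proposition \ref{prop:mu_x}(ii) identifies $\IP[T \le t \mid U \ge u]$ with the distribution function at $t$ of the independent sum $H + r^{2u}\mu_{r^{1-u}}$. I split using independence: for any $\theta \in (0,1)$,
\[
\IP[T \le t \mid U \ge u] \ge \IP[H \le (1-\theta)t]\cdot \IP[\mu_{r^{1-u}} \le \theta t\, r^{-2u}].
\]
The first factor is bounded below via the Bessel-2 comparison $R \ge \Bes^2$ of Proposition \ref{prop:comp}(i): non-negativity of the drift of $\Bes^2$ gives $\Bes^2(s) \ge 1 + B(s)$, whence $\tau(R,r) \le \tau(B, r-1)$ and, by the reflection principle, $\IP[H \le (1-\theta)t] \ge c\sqrt t\,\exp(-(r-1)^2/(2(1-\theta)t))$. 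Choosing $\theta = \theta(\eps)$ small enough that $(r-1)^2/(1-\theta) \le (r-1+\eps/2)^2$ converts this into $\ge c\sqrt t\,\exp(-(r-1+\eps/2)^2/(2t))$.

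For the second factor I would establish a polynomial-in-$\rho$ lower bound $\IP[\mu_s \le \rho] \ge c\sqrt\rho/(r\ln r)$ uniform in $s \in (1,r]$: the event $\mu_s \le \rho$ is implied by $R(s) \ge A_\rho$ for all $s \ge \rho$, with $A_\rho := \min_{[0,\rho]} R$, and conditionally on $(R(\rho), A_\rho) = (y,a)$ with $y > a$ formula \eqref{eq:Rsortie} (letting $b \to \infty$) gives this probability as $1 - \ln a/\ln y$. Since typical Brownian fluctuation on $[0,\rho]$ produces a gap $R(\rho) - A_\rho = \Theta(\sqrt\rho)$, taking expectation yields the claimed polynomial bound. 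Applied with $\rho = \theta t\,r^{-2u}$ and using $r^{-u} \ge 1/r$, this gives $\IP[\mu_{r^{1-u}} \le \theta t r^{-2u}] \ge c'\sqrt t$, so the product is $\ge c''\, t\,\exp(-(r-1+\eps/2)^2/(2t))$, which dominates $\exp(-(r-1+\eps)^2/(2t))$ for all sufficiently small $t$. The main obstacle will be making the uniform bound on $\IP[\mu_s \le \rho]$ rigorous by quantitatively controlling the joint distribution of $(R(\rho), A_\rho)$ in the small-$\rho$ regime.
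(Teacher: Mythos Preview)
Your argument is essentially correct, with part (i) taking a genuinely different route from the paper and part (ii) following the same scheme.

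\textbf{Part (i).} The paper does not compare to a Bessel process. Instead it inserts an intermediate level $a=1+\eps/2$, uses the strong Markov property to write $\IP(T\le t)\le P_a(\tau(r)\le t)$, and then unwinds the Doob $h$-transform to reduce this to a hitting-time probability for \emph{unconditioned} planar Brownian motion, controlled by the small-time large deviation principle $t\ln\Psf_{(a,0)}(\tau(|W|,r)\le t)\to -(r-a)^2/2$. Your Bessel-3 comparison is a pleasant alternative: the inequality $\ln(1+v)\ge 2v/(2+v)$ indeed gives drift domination, and the resulting bound $\IP(H\le t)\le C(r-1)t^{-1/2}e^{-(r-1)^2/(2t)}$ is exactly what is needed. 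The one point you should make explicit is that the comparison is applied after both processes leave the singular point $0$; since both $R-1$ and $\Bes^3$ are strictly positive for $t>0$, one can start from level $\delta>0$, apply Ikeda--Watanabe on $(0,\infty)$, and pass to the limit.

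\textbf{Part (ii).} The paper uses the same decomposition $T\eqlaw H+r^{2u}\mu_{r^{1-u}}$ and splits the time as $t=(t-t^2)+t^2$. For the $H$ factor the paper invokes the small-time LDP for $|W|$ rather than your Brownian comparison plus reflection principle; both yield the same exponential rate. For the $\mu$ factor the paper's argument is precisely the clean version of your sketch, and it shows that the ``main obstacle'' you flag evaporates with one observation: since always $A_\rho\le R(0)=s$, the conditional probability $1-\ln A_\rho/\ln R(\rho)$ is bounded below by $1-\ln s/\ln R(\rho)$, so it suffices to work on the event $\{R(\rho)\ge s+\sqrt\rho\}$ (whose probability is at least $\IP(B(1)\ge1)$ by Brownian domination) and use the deterministic bound $1-\ln s/\ln(s+\sqrt\rho)$. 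There is no need to control the joint law of $(R(\rho),A_\rho)$. This gives a polynomial-in-$t$ lower bound uniformly in $u\in[0,1)$, exactly as you anticipated, and your final assembly goes through.
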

%%%%%%%%%%%%%%%%%%%%%%%%%%%%%%%%%%%%%%%%%%%%%%%%%
\begin{proof} (i) Setting $a=1+\eps/2 \in (1,r)$ and using the strong Markov property for the hitting time of $a$ by $R$,
we obtain
\begin{eqnarray}\nn
\IP(T \leq t) &\leq & P_1( \tau(r)-\tau(a)  \leq t)\\ \nn
&=& P_a( \tau(r) \leq t)\\ \nn
& \stackrel{\eqref{eq:MBconditionne}}{=} & \Psf_{(a,0)}( \tau( |W|, r) \leq t \big\vert \tau(|W|;r)<\tau(|W|,1))  \\ \nn
&\leq&  \Psf_{(a,0)}( \tau( |W|, r) \leq t ) \times \frac{\ln r}{\ln a} \;.
%\\ \nn
%&\leq& 
%\IP(H \leq t) \\
%\nn &\leq& P_1( \Bes^{2} \; {\rm hits\ } r \; {\rm before\ } t  )\\ \nn 
%&\leq &  P_0( \Bes^{2} \; {\rm hits\ } r-1 \; {\rm before\ } t  )\\ \nn 
%&\leq & \exp\big(-\frac{(r-1)^2}{2t}(1+o(1))\big)\;.
\end{eqnarray}
Recalling large deviation results for Brownian motion in small time, e.g. section 6.8 of Ch. 5 in \cite{Az78}, 
\begin{equation} \label{eq:GDtempspetit}
\lim_{t \to 0} t \ln  \Psf_{(a,0)}( \tau( |W|, r) \leq t ) = - \frac{(r-a)^2}{2}\;,
\end{equation}
we see that the above upper bound implies (i).
%
%ICIII
%
%
%By comparison  $R \geq \Bes^{2}$,
%\begin{eqnarray}\nn
%\IP(T \leq t) &\leq& \IP(H \leq t) \\
%\nn &\leq& P_1( \Bes^{2} \; {\rm hits\ } r \; {\rm before\ } t  )\\ \nn 
%&\leq &  P_0( \Bes^{2} \; {\rm hits\ } r-1 \; {\rm before\ } t  )\\ \nn 
%&\leq & \exp\big(-\frac{(r-1)^2}{2t}(1+o(1))\big)\;.
%\end{eqnarray}

(ii) Let $t \leq 1$. 
By Proposition \ref{prop:mu_x}-(ii), and by comparing  $R $ and $\Bes^{2}$ from Proposition \ref{prop:comp} (i), we obtain
\begin{eqnarray}\nn
\IP( T \leq t | U \geq u) & \geq & \IP( H \leq t -t^2) \times \IP( r^{2u} \mu_{(r^{1-u})}\leq t^2 ) \\
&\geq& \IP( \Bes^{2} (t-t^2) \geq r) \times  \IP\left(\mu_{(r^{1-u})}\leq \frac{t^2}{ r^{2u} } \right) 
\nn \\
&=& %e^{-C\frac{r^2}{t}} \times   %
\Psf_{(1,0)} \left(  |W(t-t^2)|\geq r \right) \times \nn \\ && \qquad \qquad 
P_{r^{1-u}}\Big( \arg \min \{R(s); s \geq 0\} \leq \theta \Big)      \;,
\label{eq:20j1}
\end{eqnarray}
%by comparison with BES$^{2}$.  
with $\theta=\frac{t^2}{ r^{2u} }$. We estimate  the first term using again 
 large deviation for Brownian motion in small time \cite{Az78}: for $|x| < r$,
\begin{equation} \label{eq:GDtempspetit0}
\lim_{t \to 0} t \ln  \Psf_x( |W|(t) \geq r ) = - \frac{(r-|x|)^2}{2}\;.
\end{equation}
To estimate the second term in \eqref{eq:20j1}, note that $R( \theta) \geq r^{1-u}+ \sqrt \theta$ and $R(s)\geq r^{1-u}$ for all $s \geq \theta$  implies that,  $P_{r^{1-u}}$-a.s., 
$R$ achieves its minimum before time $\theta$. Hence, by Markov property and \eqref{eq:Rsortie},
\begin{eqnarray}
P_{r^{1\!-\!u}}\Big( \arg \min \{R(s); s \geq 0\} \leq \theta \Big)  \!   & \! \geq \! & \!
P_{r^{1\!-\!u}}\Big( R(  \theta) \geq r^{1\!-\!u}+ \sqrt \theta \Big) \! \times  \!
\left(1- \frac{\ln r^{1\!-\!u}}{\ln ( r^{1\!-\!u}+ \sqrt \theta)} \right)  \nn \\
 & \! \geq \! & \!
\IP\left( B(\theta) \geq \sqrt{\theta} \right) \! \times  \!
\left(1- \frac{\ln r^{1\!-\!u}}{\ln ( r^{1\!-\!u}+ \sqrt \theta)} \right)  \nn
\\ \nn &\geq& 
\IP\left( B(1) \geq 1 \right) \! \times  \!  \frac{t}{2r \ln r}  \qquad {\rm for \ small \ } t,
\end{eqnarray}
arguing on the second line that $R$ dominates Brownian motion by comparing the drift.
%
%arguing that $R$ dominates Brownian motion by comparing the drift.
%Now, for $t$ small, 
%we have $\left(1- \frac{\ln r^{1\!-\!u}}{\ln ( r^{1\!-\!u}+ \sqrt \theta)} \right)  \geq \frac{t}{2r}$.
%In view of \eqref{eq:20j1}, the proof is complete.
Combined with \eqref{eq:20j1} and \eqref{eq:GDtempspetit0}, this completes the proof of (ii).
\end{proof}
%%%%%%%%%%
\subsection{Tail estimate for $U$}
Recall Hoeffding's inequality \cite{H63}, or Th. 2.8 in \cite{BLM13}:
for  $b<1$, $c>1$ and $i \geq 1$,
\begin{equation}\label{eq:hoeffdingg}
\mathbb{P}\left[2(U_1\!+\!\dots\!+\!U_i)\geq c.i\right] \leq \exp\big(-\frac  i2 (c-1)^2\big),
\end{equation}
and
\begin{equation}\label{eq:hoeffdingl}
\mathbb{P}\left[2(U_1\!+\!\dots\!+\!U_i)\leq b.i\right] \leq \exp\big(- \frac i2 (1-b)^2\big).
\end{equation}
%%%
\begin{rem}[The random difference equation \eqref{eq:ar}]\label{rem:RDE}
Introduce the sequence 
$$S_n=\frac{T_n}{A_n^2}$$
which is key in Section \ref{sec:grandesvaleurs}. 
In view of \eqref{eq:rep}, we see that it solves the recursion 
\begin{equation}\nn
S_{n+1}= \alpha_{n+1} S_n + \beta_{n+1}
\end{equation}
(i.e.,  \eqref{eq:ar} above), with
$$
\alpha_n=(A_n')^{-2}\;,\quad \beta_n= \frac {T_n'}{(A_n')^{2}}\;.
$$
The bi-dimensional sequence $(\alpha_n,\beta_n), n \geq 1$, is i.i.d., and falls into the usual setup of random difference equation. In our case, the following quantities exist 
$$
a:= \E[ \ln \alpha_1]  \;, \quad b:= \lim_{t \to \8}  \IP[ \beta_1 > t]\times {\ln t} \;,
$$ 
and satisfy $a<0$ (contractive case), $0<b<\8$ (very heavy tail). Following \cite{Alsmeyer}  and \cite{Zerner18}, this prevents the Markov chain $S_n$ to be positive recurrent: though the contraction brings stability to the process, 
yet occasional large values of $\beta_n$ overcompensate this behavior so that positive recurrence fails to hold. 
In our case, we easily check from \eqref{eq:queuemine} that
$$
b = -a \quad (= \ln r)
$$
in which case the Markov chain $S_n$ is null recurrent, but in a critical manner: the chain is transient if $b>-a$ and null recurrent if $b \leq -a$.
\end{rem}

\section{Proofs for section \ref{sec:LTB}} \label{sec:proofsLTB}

We consider the process $R$ from \eqref{df_Rt} on a geometric scale,
\begin{equation} \label{eq:h1}
X(t)= e^{-t/2} R(e^t\!-\!1)
\end{equation}
and we observe that
$$ 
\beta(t)= \int_0^{e^t\!-\!1} \frac 1{\sqrt{1+s}} \;dB(s)
$$
is a standard Brownian motion by Paul L\'evy's characterization. We claim that  $X$ solves the stochastic differential equation 
\begin{equation}\label{eq:hnh}
\left\{
\begin{array}{ccl}
dX(t)&=& \left(   {\displaystyle  \frac{\dis 1}{\dis 2X(t)}  - \frac{\dis X(t)}{\dis 2} + \frac{\dis 1}{\dis X(t) \ln [ e^{t/2} X(t)] } } \right) dt + d\beta(t)\\  X(0)&=&R(0)\;.
\end{array}
\right.
\end{equation}
Indeed,
\begin{eqnarray*}
X(t)&=&  e^{-t/2} X(0) + e^{-t/2}  \int_0^{e^t\!-\!1} \big(\frac{1}{2R}+ \frac{1}{R \ln R}\big)\!(s)\; ds + e^{-t/2}  B({e^t\!-\!1}) \\&=& J(t) +  K(t) + L(t)\;,
\end{eqnarray*}
with $dJ(t)=-\frac 12 J(t) dt$, and 
\begin{eqnarray*}
\frac{dK(t)}{dt} &=& -\frac 12 K(t) + \frac 1{2 X(t)} + \frac 1{X(t) \ln [e^{t/2} X(t)]} \;,  \\
dL(t) &=& -\frac 12 L(t) dt  + e^{-t/2} dB(e^t\!-\!1)\;.
\end{eqnarray*}
Moreover, we easily check the equality 
$$
\int_0^t  e^{-s/2} dB(e^s\!-\!1) = \int_0^{e^t\!-\!1}  \frac 1{\sqrt{1+u}} \;dB(u)
$$
in the Gaussian space generated by $B$. Adding up terms, we see that $X$ solves the stochastic differential equation  \eqref{eq:hnh}.
%%%%%%%%
Denote by $b_t$, resp. $b_\8$ the drift coefficient and its limit, given for $ x \in (0,\8)$ by
\begin{equation}\nn
b_t(x)= \frac{1}{2x} - \frac x2 + \frac{1}{x (\ln x+ t/2)}\,, \qquad b_\8(x)= \frac{1}{2x} - \frac x2 \, ,
\end{equation}
and by $X^{(\8)}$ the homogeneous diffusion
\begin{equation}\label{eq:hh}
dX^{(\8) }(t) = \left( \frac{1}{2X^{(\8)}(t)} - \frac{X^{(\8)}(t)}{2} \right) dt + d\beta(t)\;.
\end{equation}
Following the approach of Takeyama \cite{T84}, we state the following

\begin{lem}\label{lem:ashom}
The diffusion $X(t)= e^{-t/2} R(e^t\!-\!1)$ is asymptotically homogeneous with homogeneous limit $X^{(\8)}$, i.e, for all continuous $f$ with compact support in $(0,\8)$ and all $t>0$, 
$$ E\big[ f(X(t+s)) | X(s)=x \big]  \longrightarrow E_x\big[ f(X^{(\8)}(t))\big]\quad {\rm as\ } s \to \8$$
uniformly on compact subsets of $(0,\8)$.
\end{lem}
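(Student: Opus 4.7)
My plan is to establish the lemma by a synchronous coupling of the time-inhomogeneous SDE \eqref{eq:hnh} with the autonomous SDE \eqref{eq:hh}, and then to run a Gronwall argument on the event that both processes remain in a common compact subset of $(0,\infty)$. More precisely, on a single probability space carrying a Brownian motion $\tilde\beta$, I would let $Y$ solve $dY(u)=b_{s+u}(Y(u))\,du+d\tilde\beta(u)$ with $Y(0)=x$, and $\tilde Y$ solve $d\tilde Y(u)=b_\infty(\tilde Y(u))\,du+d\tilde\beta(u)$ with $\tilde Y(0)=x$; by the time-inhomogeneous Markov property of $X$ (inherited from the Markov property of $R$), the conditional law of $(X(s+u))_{u\geq0}$ given $X(s)=x$ is exactly the law of $Y$. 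Since the noises are the same, the difference
\[
Y(u)-\tilde Y(u)=\int_0^u\!\bigl[b_\infty(Y(v))-b_\infty(\tilde Y(v))\bigr]dv+\int_0^u\!\bigl[b_{s+v}(Y(v))-b_\infty(Y(v))\bigr]dv
\]
is of finite variation and amenable to a pathwise estimate.

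For the Gronwall step I would fix $0<\eta<M<\infty$ with $K\subset(\eta,M)$ and let $\tau$ be the first exit time of either process from $[\eta,M]$. On $\{\tau>t\}$ the map $b_\infty$ is $L_\eta$-Lipschitz with $L_\eta=\frac{1}{2\eta^2}+\frac12$, while the perturbation satisfies
\[
|b_{s+v}(y)-b_\infty(y)|=\frac{1}{y\,|\ln y+(s+v)/2|}\leq\frac{2}{\eta\,s}\quad\text{for } s\geq s_0(\eta,M),
\]
uniformly in $y\in[\eta,M]$ and $v\in[0,t]$. Gronwall's inequality then gives $\sup_{u\leq t}|Y(u)-\tilde Y(u)|\leq (2t/(\eta s))\,e^{L_\eta t}\longrightarrow 0$ as $s\to\infty$, and this bound is uniform in $x\in K$. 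Since $f$ is uniformly continuous with compact support in $(0,\infty)$, writing
\[
\bigl|E[f(Y(t))]-E[f(\tilde Y(t))]\bigr|\leq\omega_f\!\left(\tfrac{2t\,e^{L_\eta t}}{\eta s}\right)+2\|f\|_\infty\,\PP(\tau\leq t)
\]
lets me drive the first term to $0$ by sending $s\to\infty$, and then the second term to $0$ by first letting $\eta\downarrow 0$ and $M\uparrow\infty$.

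The main obstacle is therefore controlling the exit probability $\PP(\tau\leq t)$ uniformly in $x\in K$ and large $s$. For $\tilde Y$ this is classical: it is a positive recurrent diffusion on $(0,\infty)$ with Rayleigh invariant law $\nu$, $0$ is inaccessible, and standard scale/speed estimates give $\PP(\tilde Y \text{ exits }[\eta,M] \text{ before }t)\to 0$ as $\eta\downarrow 0$, $M\uparrow\infty$, uniformly in $x\in K$. For $Y$ the lower bound follows from Proposition \ref{prop:comp}(i), which couples $R$ below by $\Bes^2$; after the geometric change of variables, $Y(u)$ dominates a copy of $X^{(\infty)}$ starting from $x$, so the same Bessel estimates apply. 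The upper bound is delicate because Proposition \ref{prop:comp}(ii) only controls $R$ from above once $R$ has grown past $e^{2/\delta}$; however, for $s$ large the corresponding threshold time in the $u$-scale stays near $u=0$, and combining the comparison with $\Bes^{2+\delta}$ (translated into the geometric scale) with a union bound over a finite covering of $K$ gives the required uniform upper bound on $\sup_{u\leq t}Y(u)$. With these two one-sided bounds in hand, $\PP(\tau\leq t)$ can be made arbitrarily small by choosing $\eta,M$ appropriately, completing the proof.
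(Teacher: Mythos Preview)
Your approach is sound and would yield the lemma, but it is genuinely different from the paper's argument. The paper does not couple the two diffusions nor run a Gronwall estimate; instead, after the cosmetic shift $\widehat X(t)=X(t)-e^{-t/2}$ (so as to land in the fixed state space $(0,\infty)$), it simply observes that the drift coefficients of the time-shifted diffusion $\widehat X(s+\cdot)$ converge locally uniformly to those of $X^{(\infty)}$, and then invokes Theorem~11.1.4 of Stroock--Varadhan \cite{SV06} on convergence of solutions to martingale problems. That theorem packages all the tightness and identification-of-the-limit work into a single citation, so the paper's proof is four lines.

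Your route is more elementary and self-contained---no black box from \cite{SV06}---at the cost of having to control the exit probability $\PP(\tau\le t)$ by hand. Two remarks on that step. First, the lower bound on $Y$ is already immediate from the synchronous coupling itself: on the region $y>e^{-(s+u)/2}$ (which $Y$ never leaves, since $R\ge1$) one has $b_{s+u}(y)\ge b_\infty(y)$, hence $Y(u)\ge\tilde Y(u)$ pathwise; there is no need to go back through Proposition~\ref{prop:comp}(i). Second, the appeal to Proposition~\ref{prop:comp}(ii) for the upper bound is slightly misplaced: that coupling is stated for the unconditional process started at~$1$ and involves the random last-passage time~$\sigma$, so it does not transfer directly to the conditional law given $X(s)=x$. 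The cleaner fix is again a direct drift comparison: for $y\ge1$ and $s\ge2$ the perturbation satisfies $\frac{1}{y(\ln y+(s+u)/2)}\le\frac{1}{y}$, so $b_{s+u}(y)\le\frac{3}{2y}-\frac{y}{2}$ and $Y$ is dominated, above level~$1$, by a copy of $X^{(\infty,4)}$ driven by the same $\tilde\beta$, whose upper exit probability from $[1,M]$ is uniformly small in $x\in K$. With these adjustments your argument closes.
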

\begin{proof} It is easier to consider $ \widehat X(t) = X(t)-e^{-t/2}$ which takes values in the fixed interval $(0,\8)$, and $\widehat{ X^{(s)}}(t)=\widehat X (s+t)$. 
Then,  the coefficients of the diffusion $\widehat{ X^{(s)}} $ converge to those of $X^{(\infty)}$, uniformly  on compact subsets of $(0,\8)$, and the corresponding martingale problems have a unique solution. Thus, 
Theorem 11.1.4 in  \cite{SV06}  yields  the desired result.
% SV conclut bien a la cv localement uniforme!   %%Need to argue $E X(t)$ is upper bounded and that $X(t)$ is bounded away from 0 in probability.
\end{proof}

The process $X^{(\8)}$ is the transform $X^{(\8)}(t)=X^{(\8,2)}(t)= e^{-t/2} \Bes^2(e^t-1)$
of $\Bes^{2}$ by the rescaling and deterministic time-change \eqref{eq:h1}. It is recurrent and ergodic on $(0,\8)$ with the Rayleigh law as invariant probability measure,
\begin{equation}\nn
d\nu(x) = x e^{-x^2/2} {\mathbf 1}_{(0,\8)}(x) dx 
\end{equation}

A first consequence is that $R$ marginally behaves like $\Bes^2$.

\begin{cor}[Convergence in law] \label{prop:cvlaw}
Let $Z \sim \nu$. As $t \to \8$,
\begin{equation}\nn
\frac{R(t)}{\sqrt{t}} \stackrel{\rm law}{\longrightarrow} Z \;.
\end{equation}
\end{cor}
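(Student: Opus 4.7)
The first move is to reduce the statement to one about the rescaled diffusion $X$. Writing $t=e^s-1$, i.e.\ $s=\ln(1+t)$, we have
\[
\frac{R(t)}{\sqrt{t}}= X(s)\sqrt{\frac{e^s}{e^s-1}},
\]
and since the deterministic factor tends to $1$, it suffices to show that $X(s)\to Z$ in law as $s\to\8$. Let $\mu_s=\law(X(s))$; the goal is $\mu_s \Rightarrow \nu$.

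The second step is tightness of the family $\{\mu_s\}$ in $(0,\8)$. For the upper tail I would use Proposition \ref{prop:comp}(ii): eventually $R(\sigma+u)\leq \Bes^{2+\delta}(\sigma+u)-\Bes^{2+\delta}(\sigma)+e^{2/\delta}$, and since $\Bes^{2+\delta}(t)/\sqrt{t}$ converges in law (chi distribution with $2+\delta$ degrees of freedom), the random variable $e^{-s/2}R(e^s-1)$ is stochastically bounded from above. For the lower tail I would use Proposition \ref{prop:comp}(i), $R(t)\geq \Bes^2(t)$, together with the classical fact that $\Bes^2(t)/\sqrt{t}\Rightarrow\nu$ (Rayleigh), giving $\liminf_{s\to\8}\IP(X(s)>\eps)\geq \nu((\eps,\8))\to1$ as $\eps\to 0$.

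The third and decisive step is to identify the limit via Lemma \ref{lem:ashom} combined with the ergodicity of $X^{(\8)}$ stated just above the corollary. Fix a bounded continuous $f$ with support in a compact $K_0\Subset(0,\8)$ and $\varepsilon>0$. By ergodicity of $X^{(\8)}$, pick $T$ so large that
\[
\bigl|\E_x[f(X^{(\8)}(T))]-\textstyle\int f\,d\nu\bigr|<\varepsilon \qquad \text{uniformly in } x\in K,
\]
for some fixed compact $K\subset(0,\8)$. By tightness (Step 2), choose $K$ so that $\mu_{s-T}(K^c)<\varepsilon$ for all $s$ large. Now decompose
\[
\E[f(X(s))]=\int_K \E[f(X(s))\mid X(s\!-\!T)=x]\,\mu_{s-T}(dx)+O(\|f\|_\infty \varepsilon).
\]
By Lemma \ref{lem:ashom}, the conditional expectation inside the integral converges to $\E_x[f(X^{(\8)}(T))]$ uniformly for $x\in K$ as $s\to\8$, and by the choice of $T$ this is within $\varepsilon$ of $\int f\,d\nu$. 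Letting $s\to\8$ and then $\varepsilon\to 0$ yields $\E[f(X(s))]\to\int f\,d\nu$. Extending from $f$ with compact support to arbitrary $f\in C_b((0,\8))$ is handled by the tightness established in Step 2.

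The main obstacle is the coordination of the two limits $s\to\8$ and $T\to\8$: Lemma \ref{lem:ashom} only gives uniform convergence of transition probabilities on compact sets and for fixed $T$, so one cannot simply take $T$ growing with $s$. The device above—freezing $T$ large enough to absorb the ergodicity error first, and then sending $s\to\8$—together with the tightness from Proposition \ref{prop:comp} circumvents this issue.
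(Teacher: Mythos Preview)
Your plan is correct and follows essentially the same route as the paper: reduce to $X(s)\Rightarrow\nu$, combine Lemma~\ref{lem:ashom} with the ergodicity of $X^{(\8)}$ via a two-step $s,T\to\8$ argument, and close the loop with tightness of $\{\law(X(s))\}$ in $(0,\8)$. The semigroup decomposition in the paper,
\[
P_{0,t+s}f - \textstyle\int f\,d\nu
= P_{0,s}\big(P_{s,s+t}f - P_{s,s+t}^{(\8)}f\big)
+ P_{0,s}\big(P_{s,s+t}^{(\8)}f - \textstyle\int f\,d\nu\big),
\]
is exactly your conditioning on $X(s-T)$ written in operator form.

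The only substantive difference is in the upper-tail tightness. You invoke Proposition~\ref{prop:comp}(ii) and the convergence in law of $\Bes^{2+\delta}(t)/\sqrt t$; the paper instead proves the second-moment bound $\sup_{s\geq 1}\E X(s)^2<\8$ directly, via It\^o's formula for $R^2$ and the martingale identity $E_r[1/\ln R(t)]=1/\ln r$, yielding $\E R(t)^2\leq r^2+2t(1+1/\ln r)$. Your route is slightly softer (no new computation, just reuse of the comparison lemma), while the paper's moment bound is quantitatively sharper and self-contained. One small point to tighten in your write-up: since $\sigma$ in Proposition~\ref{prop:comp}(ii) is random, you should make explicit that $\IP(\sigma>t)\to 0$ and split on $\{\sigma\leq t\}$ before invoking the tightness of $\Bes^{2+\delta}(t)/\sqrt t$; otherwise the ``stochastically bounded from above'' claim is a bit elliptic.
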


%\begin{proof}
%We combine convergence to equilibrium $X^{(\8)}(t) \to Z$ in law as $t \to \8$ and Lemma \ref{lem:ashom}. [A COMPLETER Reference ??] %%Need to argue $E X(t)$ is upper bounded and that $X(t)$ is bounded away from 0 in probability.
%\end{proof}
%%%%%%%%
%{\color {blue}
\begin{proof}
Denote by $P_{s,t}, P_{s,t}^{(\8)} (0 \leq s \leq t)$ the Markov  semi-groups associated to $X$ and $X^{(\8)}$,
$$
(P_{s,t}f) (x) = E\big[ f(X(t)) | X(s)=x \big]\;,\quad (P_{s,t}^{(\8)} f)(x)   = E\big[ f(X^{(\8)}(t)) | X^{(\8)}(s)=x \big]\;,
$$
so that $P_{s,t}^{(\8)}=P_{0,t-s}^{(\8)}$. For a bounded continuous $f: (0,\8) \to \R$ we write
\[ \begin{split}
P_{0,t+s}f (x) - &\int f d\nu = 
P_{0,s} (P_{s,s+t}f)(x)- \int f d\nu \\
=& P_{0,s} \left(P_{s,s+t} f  - P_{s,s+t}^{(\8)}f\right) (x)+
P_{0,s} \left(P_{s,s+t}^{(\8)}f- \int f d\nu \right)(x) \;,
\end{split}
\]
%\begin{eqnarray}\nn
%P_{0,t+s}f (x) - \int f d\nu &=& 
%P_{0,s} (P_{s,s+t}f)(x)- \int f d\nu \\ \nn
%&=&
%P_{0,s} \left(P_{s,s+t} f  - P_{s,s+t}^{(\8)}f\right) (x)+
%P_{0,s} \left(P_{s,s+t}^{(\8)}f- \int f d\nu \right)(x) \;,
%\end{eqnarray}
where both terms vanish as $s, t \to \8$, which is our claim. Indeed, by convergence of $X^{(\8)}$ to equilibrium, 
$P_{s,s+t}^{(\8)}f - \int f d\nu= P_{0,t}^{(\8)}f - \int f d\nu \to 0$ uniformly on compact subsets of $(0,\8)$  as $t \to \8$ and 
Lemma \ref{lem:ashom} implies that $P_{s,s+t} f  - P_{s,s+t}^{(\8)}f \to 0$ uniformly on compact  as $s \to \8$: thus, we only 
need to prove tightness, i.e. that for all positive $x$, 
\begin{equation}
\inf \big\{ P_{0,s} ( \mathbf 1_{[\eps, 1/\eps]}) (x) ; s \geq 1\big\}   \to 1 \quad {\rm as} \; \eps\to 0\;.
\end{equation}
But this follows from the next two bounds 
\begin{itemize}
\item $R \geq \Bes^2$ (see Proposition \ref{prop:comp} (i)) which implies  that $X \geq X^{(\8)}$ ,
\item  $\sup_{s\geq 1} \E X(s)^2 \leq 
\sup_{s \geq 1}  s^{-1}\E R(s)^2 < \8$ that we explain now. 
\end{itemize}
First recall from \cite{CP20} that $\frac 1{\ln R}$ is a martingale, and so, for all $r>1$, 
\begin{equation} \label{eq:mart9}
E_r \left[ \frac 1{\ln R(t)} \right]= \frac 1{\ln r}\;.
\end{equation}
By It\^o's formula, 
\begin{equation}\label{eq:ito9}
d(R^2)= 2\Big( 1 + \frac {1}{\ln R(t)} \Big) dt + 2 R(t) dB(t)\;.
\end{equation}
Thus, for all $r>1$,
$$E_r [ R(t)^2] = r^2 + 2t \Big(1 + \frac 1{\ln r}\Big)\;.$$
We now consider the process starting from $R(0)=1$. Integrating \eqref{eq:ito9}, we get
\begin{eqnarray}\nn
E_1\big[  R(t)^2 {\mathbf 1}_{\tau(r)<t} \big] 
&=& 2 E_1\left[  \int_0^t  {\mathbf 1}_{\tau(r)<s}  \big( 1 + \frac {1}{\ln R(s)} \big) ds + \int_0^t  {\mathbf 1}_{\tau(r)<s}  R(s) dB(s) \right] \\ \nn
&\stackrel{\rm Markov}{=}&  2  \int_0^t  E_1\left[  {\mathbf 1}_{\tau(r)<s}  E_r \left( 1 + \frac {1}{\ln R(\cdot)} \right)_{\cdot=s-\tau(r)}\right] ds  +0 \\ \nn
&=&  2   \Big(1 + \frac 1{\ln r}\Big)   E_1\big[\big(t-\tau(r)\big)^+\big]   \nn
\end{eqnarray}
by \eqref{eq:mart9}. Finally we obtain that 
$$\E R(t)^2 
= \E\big[  R(t)^2 {\mathbf 1}_{\tau(r)<t} \big]  + \E\big[  R(t)^2 {\mathbf 1}_{\tau(r)<t} \big]  
\leq r^2 + 2t \left(1 + \frac 1{\ln r}\right)$$ for all $r>1$. The corollary is proved.
\end{proof}

\begin{cor}[Pointwise ergodic theorem]
For all bounded continuous $f$ on $(0,\8)$, as $t \to \8$,
\begin{equation}\nn
\frac 1t \int_0^t f(X(s)) ds \longrightarrow \int_\R f d\nu  \qquad a.s.,
\end{equation}
or, equivalently,
\begin{equation}\nn
\frac 1t \int_0^{e^t-1} f\left( \frac{R(u)}{\sqrt{1+u}}\right) \frac{1}{1+u}  du \longrightarrow \int_\R f d\nu  \qquad a.s.
\end{equation}
\end{cor}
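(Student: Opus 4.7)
The two displays are equivalent via the change of variable $u=e^s-1$, which turns $ds$ into $\frac{du}{1+u}$ and $X(s)$ into $R(u)/\sqrt{1+u}$, so it suffices to prove that, for every continuous bounded $f$ on $(0,\infty)$,
\[
\frac{1}{t}\int_0^t f(X(s))\,ds \;\longrightarrow\; \int f\,d\nu \qquad \text{a.s.}
\]
Using a standard truncation together with the tightness of the family $\{X(s):s\geq 1\}$ already established in the proof of Corollary~\ref{prop:cvlaw} (via $X\geq X^{(\infty)}$ and the uniform second-moment bound $\sup_{s\geq 1}\mathbb{E}X(s)^2<\infty$), I would first reduce to the case where $f$ is continuous with support in a compact subset $K\Subset(0,\infty)$.

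The benchmark is the analogous statement for the homogeneous diffusion $X^{(\infty)}$ of \eqref{eq:hh}: since $X^{(\infty)}$ is positive recurrent on $(0,\infty)$ with invariant probability $\nu$, Birkhoff's ergodic theorem (applied to its stationary version and then transferred to arbitrary starting points using recurrence) gives $\frac{1}{t}\int_0^t f(X^{(\infty)}(s))\,ds\to \int f\,d\nu$ almost surely.

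To transfer this convergence from $X^{(\infty)}$ to $X$, I would use a Poisson-equation trick. Let $L^{(\infty)}=\tfrac{1}{2}\partial_{xx}+b_\infty\partial_x$ and pick a $C^2$ solution $u$ of $L^{(\infty)} u = f-\int f\,d\nu$ with controlled growth; such $u$ exists explicitly in terms of the scale and speed densities of $X^{(\infty)}$, and in particular $u$ and $u'$ are bounded on any compact subset of $(0,\infty)$. Applying It\^o's formula to $u(X(s))$, and writing the generator of $X$ as $L_s=L^{(\infty)}+\tfrac{1}{x(\ln x+s/2)}\partial_x$, I obtain
\[
\int_0^t \Bigl(f(X(s))-\int f\,d\nu\Bigr)\,ds = u(X(0))-u(X(t)) - \int_0^t \frac{u'(X(s))}{X(s)(\ln X(s)+s/2)}\,ds - M_t,
\]
where $M_t=\int_0^t u'(X(s))\,d\beta(s)$ is a local martingale with $\langle M\rangle_t=O(t)$ on events keeping $X$ in a compact set. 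Dividing by $t$: the boundary term is $O(1/t)$, the deterministic correction has integrand $O(1/s)$ on $\{X(s)\in K\}$ hence time-average $O(\ln t/t)$, and the martingale contribution vanishes by the strong LLN for continuous martingales.

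\textbf{Main obstacle.} The delicate point is to ensure that the excursions of $X$ outside compact subsets of $(0,\infty)$ are negligible in the time average. The bound $X\geq X^{(\infty)}$ from Proposition~\ref{prop:comp}(i), combined with the ergodicity of $X^{(\infty)}$, controls the fraction of time spent near $0$; the moment estimate $\sup_{s\geq 1}\mathbb{E}X(s)^2<\infty$ together with Markov's inequality handles excursions to large values. This same tightness, fed back into the truncation, justifies the reduction to compactly supported $f$ and closes the argument.
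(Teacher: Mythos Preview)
Your Poisson-equation strategy is a reasonable template, but there is a genuine gap in the step where you control excursions of $X$ to large values. The tightness used in Corollary~\ref{prop:cvlaw} is tightness \emph{in law}; the moment bound $\sup_{s\geq 1}\mathbb E X(s)^2<\infty$ together with Markov's inequality gives you only
\[
\mathbb E\Big[\tfrac1t\int_0^t \mathbf 1_{\{X(s)>M\}}\,ds\Big]\le C/M^2,
\]
i.e.\ smallness in mean. For an almost-sure ergodic theorem you need the \emph{pathwise} statement $\limsup_{t\to\infty}\tfrac1t\int_0^t \mathbf 1_{\{X(s)>M\}}\,ds\to 0$ as $M\to\infty$, and that does not follow from a uniform second-moment bound. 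This gap contaminates both the reduction to compactly supported $f$ and the Poisson-equation step: your claims that the boundary term $u(X(t))/t$ is $O(1/t)$, that the correction integrand is $O(1/s)$, and that $\langle M\rangle_t=O(t)$ are all stated ``on events keeping $X$ in a compact set'', but $X$ does not stay in any compact set, and you have no a.s.\ control on the time it spends outside.

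The paper closes exactly this hole by a different, much shorter route: reduce (by linearity and density) to non-decreasing $f$, and then use the \emph{two-sided pathwise} comparison from Proposition~\ref{prop:comp}. The lower bound $X\geq X^{(\infty,2)}$ you already invoke; the missing upper bound comes from part~(ii), giving $X(t)\le X^{(\infty,2+\delta)}(t)+o(1)$ for $t$ beyond a finite random time. With $f$ monotone one can sandwich the time average of $f(X)$ between those of $f(X^{(\infty,2)})$ and $f(X^{(\infty,2+\delta)}+o(1))$, apply the classical pointwise ergodic theorem to both homogeneous diffusions, and let $\delta\downarrow 0$ so that $\nu_\delta\to\nu$. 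No Poisson equation, no growth analysis of $u$, and the a.s.\ control of large excursions is obtained directly from the coupling rather than from moment bounds.
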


\begin{proof}
It is easy to check that,  w.l.o.g.,  we can assume that $f: (0,\8) \to \R $ is non-decreasing.
By the comparison principles of  Proposition \ref{prop:comp}, we can couple the processes $R, \Bes^2, \Bes^{2+\delta}$ ($\delta>0$)
starting at 1 such, a.s., for all $t \geq \ln (1+\sigma)$ with 
$$\sigma= \sup\{ s>0: R(s) \leq e^{2/\delta}\} < \8, $$ we have
$$X^{(\8,2)}(t) \leq X(t)  \leq X^{(\8,2+\delta)}(t) -  e^{-t/2}( \Bes^{2+\delta}(\sigma) - e^{2/\delta})
\;.$$
By the pointwise ergodic theorem for $X^{(\8,2)}$ and $ X^{(\8,2+\delta)}$ and monotonicity of $f$, we derive
$$
\int f d\nu \leq \liminf_{t \to \8} \frac 1t \int_0^t f(X(s)) ds \leq \limsup_{t \to \8} \frac 1t \int_0^t f(X(s)) ds \leq 
\int f d\nu_\delta\;,
$$
where $d\nu_\delta(x)= c_{\delta} x^{1+\delta/2} e^{-x^2/2} {\mathbf 1}_{(0,\8)}(x) dx $ is the invariant law of $ X^{(\8,2+\delta)}$.
As $\delta$ vanishes, the two extreme members coincide, ending the proof of the first statement. The second one follows by changing variables.
\end{proof}
%%%%%%%%%%%%
\section{Proof of Theorem \ref{th:critint}} \label{sec:prth:crintint}
%{\it Introduire $r^\pm$ et les GD} [??]

Recall the representation \eqref{eq:rep} from Corollary \ref{cor:renewal},
\begin{equation}\nn
T_k=
T_1'+A_1'^2T_2' + \ldots + (A_1'\ldots A_{k-1}')^2T_k'\;,\qquad 
A_k=A_1'\ldots A_k'
\end{equation}
with  $(T_k', A_k')_{k \geq 1}$  an i.i.d. sequence with the same law as $(T_1,A_1)$. 
\medskip

Fix $r_\pm$ with $1<r_-<r<r^+<\8$. By \eqref{eq:hoeffdingg} and \eqref{eq:hoeffdingl}, 
%Cram\'er's theorem \cite{DZ},  
with probability one there exists some finite random $k_0$ such that for all $k\geq k_0$
$$
 r_-^{k/2} \leq  A_1'\ldots A_k' = r^{U_1+\ldots + U_k} \leq   r_+^{k/2} \;.
$$
In what follows we will use the rough bounds 
 \begin{equation}\label{eq:bds}
 \max_{i=1, \dots, k} %r_-^{i-1} 
 T_i'\leq T_k \leq T_{k_0}+ (k-k_0) \max_{i=1, \dots, k} r_+^{i-1} T_i'\;.
\end{equation}
%%%%%%%%%
\begin{lem} \label{lem:1...}There exists a constant $c$ such that for all sequence $(\delta(k))_k$ tending to 0, we have
$$
\IP \Big[k \max_{i=1, \dots, k} r_+^{i-1} T_i' \geq  e^{k/\delta(k)}\Big] \leq c \delta(k)
$$
eventually. %for all  large enough $k$.
\end{lem}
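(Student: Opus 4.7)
The plan is a straightforward union bound combined with the logarithmic tail for $T$ from Proposition \ref{prop:tailT}. Since the $T_i'$ are i.i.d.\ with the law of $T_1$, I first write
\[
\IP \Big[k \max_{i=1, \dots, k} r_+^{i-1} T_i' \geq  e^{k/\delta(k)}\Big] \leq \sum_{i=1}^k \IP\Big[ T_i' \geq \frac{e^{k/\delta(k)}}{k r_+^{i-1}} \Big].
\]
By Proposition \ref{prop:tailT}, there is a constant $C$ (say $C = 2\ln r$) such that $\IP[T_1 \geq t] \leq C/\ln t$ for all $t$ large enough. Applied term by term, the tail of $T_i'$ against the level $e^{k/\delta(k)}/(k r_+^{i-1})$ gives the bound
\[
\IP\Big[ T_i' \geq \frac{e^{k/\delta(k)}}{k r_+^{i-1}} \Big] \leq \frac{C}{\dfrac{k}{\delta(k)} - \ln k - (i-1)\ln r_+}
\]
provided the argument of the logarithm is large enough, which holds uniformly in $i \leq k$ once $k$ is large because $\delta(k) \to 0$.

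Next I absorb the lower-order terms in the denominator. For $i \leq k$, the negative contribution is at most $\ln k + k\ln r_+$. Since $\delta(k)\to 0$, we have $1/\delta(k) \to \infty$, so eventually
\[
\frac{1}{2\delta(k)} \geq \ln r_+ + \frac{\ln k}{k},
\]
which rearranges to $k/\delta(k) - \ln k - k\ln r_+ \geq k/(2\delta(k))$. Therefore, eventually in $k$, each summand is at most $2C\delta(k)/k$, uniformly in $i \in \{1,\dots,k\}$.

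Summing over the $k$ indices finally gives
\[
\IP \Big[k \max_{i=1, \dots, k} r_+^{i-1} T_i' \geq  e^{k/\delta(k)}\Big] \leq k \cdot \frac{2C\delta(k)}{k} = 2C\,\delta(k),
\]
so the lemma holds with $c = 2C$. The main (and only) point to watch is that in the denominator the sensitive term is the shift $(i-1)\ln r_+$ (worst case $i=k$), which is linear in $k$; this is harmless only because $1/\delta(k)$ diverges, so the $k/\delta(k)$ dominates. No control on the rate of $\delta(k) \to 0$ is needed, and the geometric weight $r_+^{i-1}$ is absorbed effortlessly thanks to the logarithm in the tail estimate.
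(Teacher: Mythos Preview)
Your argument is correct. The constant $c = 2C$ is indeed independent of the sequence $\delta$, and the ``eventually'' threshold legitimately depends on $\delta$ through the requirement $1/(2\delta(k)) \geq \ln r_+ + (\ln k)/k$, which is all the lemma asks for.

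Your route differs from the paper's. The paper does not use a union bound: it introduces an auxiliary threshold $t_k = k\, r_+^k\, a^{k/\delta(k)}$ with $1<a<e$ (so that $e^{k/\delta(k)} \geq t_k$ eventually), writes the complementary probability exactly as a product $\prod_{i=1}^k \IP[T_i' < r_+^{i} v_k]$ using independence, bounds each factor via $1-x \geq e^{-2x}$, and then controls the resulting sum $\sum_{i=1}^k 1/(i\ln r_+ + \ln v_k)$ by comparison with an integral, obtaining $\ln(1+C'\delta(k))$ and hence the $c\,\delta(k)$ bound. Your approach replaces all of this by the union bound plus a uniform (worst-case $i=k$) estimate on each summand. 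This is shorter and more elementary: it uses only that the $T_i'$ are identically distributed (independence is not needed), and avoids the auxiliary parameter $a$ and the harmonic-sum comparison. The paper's computation would in principle yield a sharper constant if one tracked it, but that plays no role here.
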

%%%%%%%%%%
\begin{proof}
Fix $a $ with $1<a<e$.      Letting $v_k=a^{\frac{k}{\delta(k)}}$ and $t_k=k r_+^k v_k$, we note that $e^{\frac{k}{\delta(k)}} \geq t_k$ eventually since  $\delta$ vanishes, and we have by independence
    \begin{align}\nn
 \IP[k \max_{i=1, \dots, k} r_+^{i-1} T_i' < t_k]  
%        & = \IP[\max_{i\leq k} r_+^{i-1} T_i' < r_+^k v_k] \\
        & = \Pi_{i=1, \dots, k} \IP[T_i'< r_+^{k-i+1} v_k] \nn
    \end{align}
%%%%%
From Proposition \ref{prop:tailT}  there exists $c_1>0$ such that for all $t>1$
$$
\IP(T_1 \geq t) \leq \frac{c_1}{\ln t}
$$
and since $v_k \to \8$ as $k\to \8$, we have for all  large enough $k$,
    \begin{align} \nn
     \IP[k \max_{i=1, \dots, k} r_+^{i-1} T_i' < t_k]  
        & \geq \Pi_{i=1}^k \left(1-\frac{c_1}{\ln (r_+^{k-i+1} v_k)}\right)\\ \nn
        & = \Pi_{i=1}^k \left(1-\frac{c_1}{\ln (r_+^i v_k)}\right)\\ \nn
        & \geq \exp \left(-2c_1 \sum_{i=1}^k \frac{1}{i \ln r_+ + \ln v_k}\right)\\ \nn
        & \geq \exp \left(-\frac{2c_1}{\ln r_+} \ln\left(\frac{k \ln r_+ +\ln v_k}{\ln v_k}\right) \right)\\ \nn
        & = \exp \left( - \frac{2c_1}{\ln r_+} \ln(1+  \frac{\ln r_+}{\ln a} \delta(k)) \right)\\ \nn
        & \geq 1-c \delta(k)
    \end{align}
 with $c = 2 c_1/\ln a$ for all large $k$, since $\delta$ vanishes at $\8$. This ends the proof.
\end{proof}
%%%%%%%%%
\begin{proof}{\it Theorem \ref{th:critint},  claim \eqref{eq:critint1}.}
 Let 
 $$\delta(t)=g(\ln t), \qquad \ka(i)=2^i , i\geq1, \qquad 
  \Ka=\{\ka(i) : i\geq 1\}.$$ 
  Define, for $x \geq 2$, $\left\lfloor x \right\rfloor_\Ka=\max \{k\in \Ka : k\leq x\}= 2^{\lfloor (\ln x)/(\ln 2) \rfloor}$. Note that
  \begin{equation} \label{eq:1...}
x \geq \left\lfloor x \right\rfloor_\Ka \geq  x/2 \;. %\frac{x}{2}\;.
\end{equation}
First, since $g$ is non-increasing,
    \begin{equation} \label{eq:serie-integrale} 
        \begin{split}  
            \sum_{k\in \Ka} \delta(k) & = \sum_{i\geq 1} \delta(k(i)) \\
            & = \sum_{i\geq 1} g(\ln k(i)) \\
            & = \sum_{i\geq 1} g(i  \ln 2) \\
            & \leq \frac 1 {\ln 2} \sum_{i\geq 1} \int_{(i-1)\ln 2}^{i \ln 2} g(t) {d}t\\
            & =\frac 1 {\ln 2} \int_0^{\infty} g(t) {d}t < \infty
        \end{split}
    \end{equation}
Fix a constant $c_2 >0$ to be chosen later and $c_3=c_2^{-1}$. Combining Borel-Cantelli's lemma and Lemma \ref{lem:1...},  we have a.s.
$$
k \max_{i=1, \dots, k} r_+^{i-1} T_i' <  e^{c_2 k/\delta(k)} \quad {\it for\ all\ } k \in \Ka  {\it \ large \ enough},
$$
and, in addition to \eqref{eq:bds}, we have for large $k \in \Ka$,
\begin{equation} \label{eq:vazy}
T_k \leq T_{k_0} + \frac{k-k_0}{ k} e^{c_2 k/\delta(k)} \leq e^{c_2 k/\delta(k)} \, % {\it ev.\ on\ } \Ka 
\end{equation}
since $g$ is non-increasing. By integrability, $g$ is vanishing at infinity, so the function 
$$f(t)= c_3 (\ln t )\; g(\ln_2 t)$$
is such that $f(t) \leq \ln t$ eventually, and also $g(\ln_2 t)\leq g(\ln f(t))$ by monotonicity. Thus, for large
$k$ and $t$'s,  
\begin{equation}\label{eq:implies}
 k \leq c_3 (\ln t) \delta(\ln t)=f(t) \quad
 {\rm implies\; that} \qquad \qquad \qquad \qquad \qquad \qquad \phantom{} 
\end{equation}
$$
\phantom{} \qquad \qquad \qquad \qquad \qquad 
    \frac{k}{\delta(k)} =\frac{k}{g(\ln k)}\leq \frac{f(t)}{g(\ln f(t))} = \frac{c_3(\ln t) g(\ln_2 t)}{g(\ln f(t))}\leq c_3 \ln t.
$$
%%%%%%%%%
Now, define random integers  $k(t)= \max \{k\in \Ka ; T_k \leq t\}$, and note from \eqref{eq:vazy} that a.s., for large $t$ 
we have $k(t)\geq \max\{k \in \Ka ; e^{c_2\frac{k}{\delta(k)}}\leq t\}$. Then,  a.s., for all large enough $t$,
\begin{align} \nn
    M_t \geq M_{T_{k(t)}} = A_{k(t)} \geq r_-^{\frac{k(t)}{2}} & \geq r_-^{\frac{1}{2} \max\{k \in \Ka: e^{c_2\frac{k}{\delta(k)}}\leq t\}}\\ \nn
    & = r_-^{\frac{1}{2} \max\{k \in \Ka : \frac{k}{\delta(k)}\leq c_3 \ln t\}}\qquad ({\rm using\ } c_3=c_2^{-1})\\ \nn
    & \geq r_-^{\frac{1}{2} \max\{k \in \Ka : k\leq f(t)\}} \qquad \qquad {\rm (by\ \eqref{eq:implies})}\\ \nn 
    &= r_-^{\frac{1}{2} \left\lfloor c_3(\ln t) \delta(\ln t)\right\rfloor_\Ka}\\ \nn
    &\geq r_-^{\frac{c_3}{4} (\ln t) \delta(\ln t)} \qquad \qquad    {\rm (by\ \eqref {eq:1...})} 
\end{align}
Taking $c_3=c_2^{-1}> 4/ \ln r_-$, we conclude that
a.s., $M(t) \geq e^{(\ln t) g(\ln_2t)}$  { eventually,}
ending the proof of \eqref{eq:critint1}.
\end{proof}

We now turn to the proof of claim
\eqref{eq:critint2} of Theorem \ref{th:critint}.
We start with a lemma:

\begin{lem}\label{lem:petitptag}
Let  $(n_k)_{k\geq0}$ be a non-decreasing  sequence of integers and $(t_k)_{k\geq0}$ be a sequence with $t_k > 1$. Then, 
$$\sum_{k\geq0} \frac{n_{k+1}-n_k}{\ln t_{k+1}}=\infty \quad  \Longrightarrow  \quad  a.s., \; T_{n_k}\geq t_k \; {\it infinitely\ often}.$$
\end{lem}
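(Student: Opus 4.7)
The plan is to reduce to an application of the Borel--Cantelli lemma for a family of \emph{independent} events, one per index $k$, each of which forces $T_{n_{k+1}}\geq t_{k+1}$.

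First, I will exploit the i.i.d.\ representation of Corollary \ref{cor:renewal} together with the monotonicity of the record values $(A_n)$. Since $A_n\geq A_{n-1}$ for every $n$ (the future minimum can only increase after $T_{n-1}$), each ratio $A_j'=A_j/A_{j-1}$ satisfies $A_j'\geq 1$. Dropping all such factors in the expansion
$$
T_{n_{k+1}}-T_{n_k} \;=\; \sum_{j=n_k+1}^{n_{k+1}} (A_1'\cdots A_{j-1}')^{2}\,T_j'
$$
gives the clean bound $T_{n_{k+1}}-T_{n_k}\geq M_k$, where $M_k:=\max_{n_k<j\leq n_{k+1}} T_j'$. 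Because $(T_j')_{j\geq 1}$ is i.i.d.\ (Corollary \ref{cor:renewal}) and the blocks $(n_k,n_{k+1}]$ are pairwise disjoint, the $M_k$'s, and hence the events $E_k:=\{M_k\geq t_{k+1}\}$, are mutually independent; and on $E_k$ we have $T_{n_{k+1}}\geq t_{k+1}$.

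Next I will estimate $\IP(E_k)$ from below. Writing $N_k=n_{k+1}-n_k$ and $p_k=\IP(T_1\geq t_{k+1})$, the standard inequality $1-(1-p)^N\geq 1-e^{-Np}$ yields
$$
\IP(E_k) \;\geq\; c_0\,\min\bigl(1,\,N_k p_k\bigr)
$$
for an absolute $c_0>0$. Proposition \ref{prop:tailT}, combined with the monotonicity and strict positivity of $t\mapsto \IP(T_1\geq t)$ on $(1,\infty)$, provides a constant $c_1>0$ such that $p_k\geq c_1/\ln t_{k+1}$ uniformly in $k$ (the large-$t$ regime is the asymptotic $\sim \ln r/\ln t$, while for bounded $t_{k+1}$ the ratio $p_k\cdot \ln t_{k+1}$ stays bounded away from $0$). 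Hence
$$
\IP(E_k) \;\geq\; c_2\,\min\!\Bigl(1,\,\tfrac{N_k}{\ln t_{k+1}}\Bigr).
$$

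Under the divergence assumption $\sum_k N_k/\ln t_{k+1}=\infty$, a trivial dichotomy shows $\sum_k\IP(E_k)=\infty$: either infinitely many summands $N_k/\ln t_{k+1}$ exceed $1$, in which case $\IP(E_k)\geq c_2$ infinitely often, or else $\min(1,\cdot)$ is eventually the identity and the series inherits the divergence. The Borel--Cantelli lemma for independent events then gives $E_k$ i.o.\ almost surely, so $T_{n_{k+1}}\geq t_{k+1}$ for infinitely many $k$, which is the claim after the trivial reindexation $m=k+1$.

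The only genuinely non-routine step is the opening one: noticing that the a.s.\ monotonicity $A_n\nearrow$ collapses the weighted sum into the plain maximum $M_k$, which is exactly the device that separates the $k$'s into independent coordinates and makes the second Borel--Cantelli lemma applicable. Once this is in place, the tail estimate of Proposition \ref{prop:tailT} and a one-line elementary probability computation finish the job.
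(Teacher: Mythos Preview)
Your proof is correct and follows essentially the same route as the paper's: you define the same independent block events $E_k=\{\max_{n_k<j\leq n_{k+1}}T_j'\geq t_{k+1}\}$, use $A_j'\geq 1$ to get $E_k\subset\{T_{n_{k+1}}\geq t_{k+1}\}$, bound $\IP(E_k)$ from below via Proposition~\ref{prop:tailT}, and apply the second Borel--Cantelli lemma. The only imprecision is the parenthetical claim that $p_k\ln t_{k+1}$ stays bounded away from $0$ for bounded $t_{k+1}$, which fails if $t_{k+1}\to 1^+$; but since in that regime $N_k/\ln t_{k+1}>1$ whenever $N_k\geq 1$ and $\IP(E_k)\geq p_k\geq\IP(T\geq 2)>0$, the desired bound $\IP(E_k)\geq c_2\min(1,N_k/\ln t_{k+1})$ still holds---the paper likewise dismisses the case $t_k\not\to\infty$ as ``easily considered''.
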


\begin{proof} 
The events $E_k=\{\max_{i=n_k+1,\ldots,n_{k+1}} T_i' \geq t_{k+1}\} , k \geq 0$ are independent with $E_k \subset \{ T_{n_{k+1}}\geq t_{k+1}\}$. Hence the conclusion holds as soon as these events occurs  infinitely often a.s. By the second Borel-Cantelli lemma, it suffices to show that the assumption implies $\sum_{k\geq0} {\IP}(E_k)=\infty.$
 We use Proposition \ref{prop:tailT} and  independence. The case when $t_k$ does not tend to infinity  is easily considered, so we assume from now on that $k$ is large enough so that 
 $\IP(T\geq t_{k+1}) \geq c/ \ln t_{k+1}$ for some fixed constant $c \in (0,\ln r)$. Then, we can bound
 \begin{align} \nn
    {\IP}(E_k) & = 1-\IP(T\leq t_{k+1})^{n_{k+1}-n_k} \\ \nn
        & \geq 1 - \left(1 -\frac{c}{\ln t_{k+1}}\right)^{n_{k+1}-n_k} \\  \nn
        & \geq 1- \exp\left(- \frac{c(n_{k+1}-n_k)}{ \ln t_{k+1}} \right) 
%        \\   & = \frac{(n_{k+1}-n_k)\ln r}{\ln t_{k+1}} +o\left(\frac{n_{k+1}-n_k}{\ln t_{k+1}}\right)
    \end{align}
 which is the general term of a divergent series.
\end{proof}
%%%%%
\begin{proof} {\it Theorem \ref{th:critint}, claim
\eqref{eq:critint2}.}
%We finish the proof of \eqref{eq:critint2}. 
Let  us consider  
$$t_k =e^{e^k}\;,\quad n_k=\lfloor f(t_k)\rfloor\;,\quad f(t)= c_3 (\ln t) g(\ln_2 t)$$ with $c_3>0$ to be fixed later. 
Note that $f$ is non-decreasing by assumption. We have 
   \begin{equation}\nn
   \begin{split}\nn
              \sum_{k\geq0} \frac{n_{k+1}-n_k}{\ln t_{k+1}} 
         = \sum_{k\geq0} \frac{\lfloor f(t_{k+1})\rfloor -\lfloor f(t_k)\rfloor }{\ln(t_{k+1})} 
         &= \sum_{k\geq0} \frac{f(t_{k+1})-f(t_k)}{\ln(t_{k+1})} + c_4\\
           & = c_3 \sum_{k\geq0} g(k+1)-\frac{1}{e} g(k) + c_4 \nn
            \end{split}
    \end{equation}
with a constant  $c_4$ which is finite since $t_k$ is increasing fast and the truncation error is bounded.
%, and where the inequality comes from monotonicity of the sequence $t_k$.
As in \eqref{eq:serie-integrale},
$ \sum_{k\geq 0} g(k)\geq  \int_0^{\infty} g(t) {d}t = \infty,$ 
and
    $$\sum_{k=0}^n g(k+1)-\frac{1}{e} g(k) = g(n+1)-\frac{1}{e}g(0) +\left(1-\frac{1}{e}\right)\sum_{k=1}^n g(k).$$
Therefore $\sum_{k\geq0} \frac{n_{k+1}-n_k}{\ln t_{k+1}}=\infty$. From Lemma \ref{lem:petitptag} we obtain
that a.s.,  $T_{n_k}\geq t_k$ i.o., which shows that
$$
M_{t_k}\leq M_{T_{n_k}} = A_{n_k}\leq r_+^{n_k}\leq r_+^{f(t_k)}\;.
$$
Taking  $c_3< 1/\ln r_+$, we obtain the desired claim.
\end{proof}

%We end the section with a remark. 
%
%\begin{rem}
%The non-increasing property of $g$ is not essential, it is only used to derive the convergence of $\sum g(i)$ from that of $\int^\8 g(x) dx$.
%Many natural examples of test functions $g$ have this mononicity property. 
%%On the other hand, the natural monotonicity assumption is to require $(\ln t) g(\ln_2t)$ to be non-decreasing.
%\end{rem}

%%\break
%%%%%%%%%%%%
\section{Proof of Theorem \ref{th:grandesvaleurs}} \label{sec:grandesvaleurs}
\addtocounter{equation}{1}   
%%%
%
%  Why do I need that ?
%
%%%
We study the sequence 
\begin{equation*} \label{eq:test}
S_n=\frac{T_n}{A_n^2}=\sum_{i=1}^n  \frac{T_i' A_{i-1}^2}{A_n^2}=
\sum_{i=1}^n  \frac{T_i'}{r^{2(U_i+\dots+U_n)}}\;,
\end{equation*}
which can be written in the form
\begin{equation} \label{eq:Sn=Snm}
S_m=\frac{S_n}{r^{2(U_{n+1}+\dots + U_m)}}+S_{n+1}^m,
\end{equation}
where, for $1\leq n < m$,  
$$S_{n+1}^m = \sum_{i=n+1}^m  \frac{T_i'}{r^{2(U_i+\dots+U_m)}}.$$
The point is that, in \eqref{eq:Sn=Snm}, $S_n$ and $S_{n+1}^m$ are independent, with $S_{n+1}^m$ equal to
 $S_{m-n}$ in law.
 \medskip
 
 We study the convergence/divergence of the series $\sum_{n\geq 1} \mathbb{P}[S_n\leq t_n]$, 
 with $t_n$ of the form 
 \begin{equation}\label{eq:tn}
t_n= \frac{\beta}{\ln_2 n} \wedge 1
\end{equation}
 for some $\beta >0$.
 %for some non-increasing, positive sequence $(t_n)_{n\geq 1}$. 
 %%%%%%%%%%%%%%
 
 \subsection{Proof of \eqref{eq:grandesvaleurs1}.}

Let $(i^{(n)})_{i\geq 1}$ be a sequence of integers such that $1\leq i^{(n)}\leq n$ and $(c_i^{(n)})_{i=i^{(n)}+1,\dots, n, n\geq 1}$ be a doubly-indexed sequence of real parameters with $c_i^{(n)}>1$, to be fixed later on.

\subsection*{Upper bound:}
From \eqref{eq:Sn=Snm} we have
\begin{align*}
    \mathbb{P}[S_n\leq t_n]
    & \leq \mathbb{P}\left[\frac{T_1'}{r^{2(U_1+\dots+U_n)}}\leq t_n, S_2^n\leq t_n\right]\\
    & \leq \mathbb{P}\left[\frac{T_1'}{r^{2(U_1+\dots+U_n)}}\leq t_n, S_2^n\leq t_n, 2(U_1+\dots+U_n)\leq c_n^{(n)}.n\right]\\
    &\qquad  +\mathbb{P}[2(U_1+\dots+U_n)>c_n^{(n)}.n]\\
    & \leq \mathbb{P}[T_1'\leq t_n  r^{c_n^{(n)}.n}, S_2^n\leq t_n]+\mathbb{P}[2(U_1+\dots+U_n)>c_n^{(n)}.n]\\
    &  \leq \mathbb{P}[T\leq t_n r^{c_n^{(n)}.n}] \times \mathbb{P}[ S_{n-1}\leq t_n]+\mathbb{P}[2(U_1+\dots+U_n)>c_n^{(n)}.n].
\end{align*}

Iterating the estimate,
$$\mathbb{P}[ S_{n\!-\!1}\leq t_n] \leq \mathbb{P}[T\leq t_n r^{c_{n\!-\!1}^{(n)}.(n\!-\!1)}] \times \mathbb{P}[ S_{n\!-\!2}\leq t_n]+\mathbb{P}[2(U_1+\dots+U_{n\!-\!1})>c_{n\!-\!1}^{(n)}.(n\!-\!1)],$$
and so on down to $i^{(n)}+1$, we obtain
\begin{equation}\label{MaxSn}
    \begin{aligned}
        \mathbb{P}[S_n\leq t_n]
        & \leq \left(\prod_{i=i^{(n)}+1}^n \mathbb{P}[T\leq t_n r^{c_i^{(n)}.i}] \right) \times \mathbb{P}[S_{i^{(n)}} \leq t_n] \\
        & + \sum_{i=i^{(n)}+1}^n \left( \prod_{j=i+1}^n \mathbb{P}[T\leq t_n r^{c_j^{(n)}.j}]\right)\times \mathbb{P}[2(U_1\!+\!\dots\!+\!U_i)>c_i^{(n)}.i].
    \end{aligned}
\end{equation}
%%%%%%

\subsection*{Choice of $i^{(n)}$ and the $c_i^{(n)}$}

Let $i^{(n)}=\lfloor \ln_2 n \rfloor$ and for $i^{(n)}+1 \leq i \leq n$,
\begin{equation}\label{eq:cni}
 c_i^{(n)}=1+\sqrt{\frac{8}{i}(\ln i+\ln_2 n)}.
\end{equation}

By (\ref{eq:tn}),
we have for $i^{(n)}+1\leq i \leq n$ and large $n$,
\begin{align} \nn
    \ln \mathbb{P}[ T\leq t_n r^{c_i^{(n)}.i}]
    & \leq \ln \mathbb{P}[ T\leq r^{c_i^{(n)}.i}] \\ \nn
    & \leq -\mathbb{P}[ T\geq r^{c_i^{(n)}.i}]  \\ \nn
    & \leq  -\frac{1}{c_i^{(n)}.i} + \eps_{n,i,1} \qquad  \qquad (by\;  \eqref{Queueminp})  \\ \nn
    & \leq  -\frac{1}{i} + \eps_{n,i,2} \qquad  \qquad (by\;  \eqref{eq:cni}) ,
\end{align}

with error terms
$$ 
\eps_{n,i,1}=\frac{\ln_2\left(c_i^{(n)}.i \ln r\right) +C}{\left(c_i^{(n)}.i\right)^2 \ln r}, \quad  \eps_{n,i,2}= \eps_{n,i,1}+\sqrt{\frac{8}{i^3}(\ln i+\ln_2 n)}\;.
$$

One can check that  $\sup_n \sum_{i=i^{(n)}+1}^n \eps_{n,i,2}< \8$, so for some positive constant $D$, for $n$ large and $i^{(n)}\leq i \leq n$,
\begin{align}\nn
    \prod_{j=i+1}^n \mathbb{P}[ T\leq t_n r^{c_j^{(n)}.j}]
    & \leq \exp\left(-\sum_{j=i+1}^n \frac{1}{j} + \sum_{j=i+1}^n \eps_{n,j,2} \right)\\ \nn
    & \leq D \exp\left( -\ln\left(\frac{n}{i}\right) \right)\\
    & \leq  D \frac{i}{n}. \label{eq:D'}
\end{align}

Combining this with  \eqref{eq:hoeffdingg}, we get for $n$ large and $i^{(n)}+1\leq i\leq n$,
\begin{align*}
    \left( \prod_{j=i+1}^n \mathbb{P}[T\leq t_n r^{c_j^{(n)}.j}]\right)\times \mathbb{P}[2(U_1\!+\!\dots\!+\!U_i)>c_i^{(n)}.i]
    & \leq D \frac{i}{n} \exp(-4(\ln i + \ln_2 n))\\
    &= \frac{D}{i^3 n (\ln n)^4}.
\end{align*}

Thus, the series $\sum a_n$, with
 $$a_n=\sum_{i=i^{(n)}+1}^n \left( \prod_{j=i+1}^n \mathbb{P}[T\leq t_n r^{c_j^{(n)}.j}]\right)\times \mathbb{P}[2(U_1\!+\!\dots\!+\!U_i)>c_i^{(n)}.i],$$
is convergent.

\subsubsection*{Choice of $t_n$}

To conclude, we need to take care of the first term in the right-hand side of \eqref{MaxSn}.
Recall $t_n$ from \eqref{eq:tn} (we will assume $n$ large so that $\ln_2 n \geq \beta$), and fix %$i_1$ be such that $\ln_2 i_1 \geq \beta$.
an integer $i_1 \geq 1$.
For $1\leq i\leq i_1$, applying \eqref{eq:LtailT0} we get as $n \to \8$, for any $\epsilon \in (0,r-1)$,
\begin{align*}
    \mathbb{P}[ T\leq t_n r^{2i}]
 %   & =\mathbb{P}[ T\leq t_n r^{2i}]\\
    & \leq  \exp\left(-\frac{(r-1-\epsilon)^2}{2\beta r^{2i}}\ln_2 n\right),
\end{align*}
and then, for n large,
\begin{align*}
    \mathbb{P}[S_{i^{(n)}} \leq t_n] 
    & \leq \mathbb{P}[T_i'\leq t_n r^{2i}, i=1,\dots,i_1]\\ 
    & =  \prod_{i=1}^{i_1} \mathbb{P}[ T\leq t_n r^{2i}] \\
    & \leq  \exp\left(-\sum_{i=1}^{i_1} \frac{(r-1-\epsilon)^2}{2\beta r^{2i}}\ln_2 n\right)\\
    & \leq  \exp\left(- \frac{(r-1-\epsilon)^2}{2\beta} \frac{1}{r^2}\frac{1 -\left( \frac{1}{r^2}\right)^{i_1}}{1-\frac{1}{r^2}}\ln_2 n\right)\\
    & \leq (\ln n)^{-\frac{(r-1-\epsilon)^2}{2\beta(r^2-1)} \left(1 -\left(\frac{1}{r^2}\right)^{i_1}\right)}.
\end{align*}
Using \eqref{eq:D'} we will bound 
$$\left(\prod_{i=i^{(n)}+1}^n \mathbb{P}[T\leq t_n r^{c_i^{(n)}.i}] \right) \times \mathbb{P}[S_{i^{(n)}} \leq t_n] 
\leq D \frac{i^{(n)}}{n} (\ln n)^{-\frac{(r-1-\epsilon)^2}{2\beta(r^2-1)} (1 -\left(\frac{1}{r^2}\right)^{i_1})},$$
where $i^{(n)}= \lfloor \ln_2 n \rfloor$.
As soon as $\beta < \frac{(r-1)}{2(r+1)}$, there exists some integer $i_1$ and some $\epsilon \in (0,r-1)$ such that
$$\frac{(r-1-\epsilon)^2}{2\beta (r^2-1)} \left(1 -\left(\frac{1}{r^2}\right)^{i_1}\right)>1,$$ and combining \eqref{MaxSn}
with $\sum_n a_n < \8$, we 
obtain $\sum \IP(S_n \leq  t_n) <\8$. i.e., 
$$\sum_{n\geq 1} \mathbb{P}[T_n\leq A_n^2 t_n]<\infty.$$

\subsubsection*{Conclusion}

Let $\beta   < \frac{(r-1)}{2(r+1)}$.
It follows from Borel-Cantelli's lemma that a.s., eventually 
$$T_n\geq \frac{\beta A_n^2}{ \ln_2 n}.$$
Now, for $T_n\leq t \leq T_{n+1}$, if $n$ is large enough, 
$$M_t\leq M_{T_{n+1}}=A_{n+1}\leq  r A_n \leq r \sqrt{\beta^{-1} T_n \ln_2 n}  \leq r  \sqrt{\beta^{-1} t \ln_2 n},$$
and since we have
 $T_n \geq \frac{\beta A_n^2}{\ln_2 n} \geq r_-^{\frac{n}{2}}$ for $n$ large enough, we have $t\geq  r_-^{\frac{n}{2}}$, and $n \leq \frac{2 \ln t}{\ln r_-}$. Finally,
$$M_t\leq r \sqrt{\beta^{-1} t \ln_2 \left(\frac{2 \ln t}{\ln r_-}\right)}.$$
Hence, we have proved \eqref{eq:grandesvaleurs1} with any
 $K>r\sqrt{\frac{2(r+1)}{(r-1)}}$. 
 % Optimizing on $r>1$, we get the result for all $K>\sqrt{11+5\sqrt{5}}\approx 4.7096$. 

 \subsection{Proof of \eqref{eq:grandesvaleurs2}} 
 
 We start by proving that it suffices to show divergence of the series introduced above \eqref{eq:tn}:
 %%%%%
 \begin{lem} \label{lem:beta0}
 Let $\beta_0 = \inf\{ \beta>0:  \sum_n \IP( S_n \leq \frac{\beta}{\ln_2 n})= \8\}$. Then
 $$
 \liminf_n S_n \ln_2 n = \beta_0 \quad a.s.
 $$
 \end{lem}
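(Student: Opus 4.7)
The plan is to combine a zero-one law with the two Borel--Cantelli lemmas applied to the events $A_n = \{S_n \leq \beta/\ln_2 n\}$. First I would show that $\liminf_n S_n \ln_2 n$ is almost surely constant. By Corollary \ref{cor:renewal}, the sequence $(U_n,T_n')_{n\geq 1}$ (with $U_n = \log_r A_n'$) is i.i.d. Changing $(U_i,T_i')$ for $i \leq k$ only modifies the first $k$ terms of $S_n = \sum_{i=1}^n T_i'/r^{2(U_i+\cdots+U_n)}$, by a total amount at most $C_k \cdot r^{-2(U_{k+1}+\cdots+U_n)}$. Since the strong law of large numbers gives $U_{k+1}+\cdots+U_n \sim (n-k)/2$, this change decays exponentially in $n$ and is negligible compared to $1/\ln_2 n$. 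Hence $\liminf_n S_n \ln_2 n$ is tail-measurable and Kolmogorov's 0-1 law yields a deterministic value $\beta_* \in [0,\8]$.

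For the easy direction $\beta_* \geq \beta_0$, pick $\beta < \beta_0$: by definition of $\beta_0$ as an infimum and the monotonicity of $\beta \mapsto \IP(A_n)$, one has $\sum_n \IP(A_n) < \8$, and the first Borel--Cantelli lemma gives $S_n \ln_2 n > \beta$ eventually, almost surely. Letting $\beta \uparrow \beta_0$ along rationals gives $\beta_* \geq \beta_0$.

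The harder direction $\beta_* \leq \beta_0$ requires a Kochen--Stone argument, since the $A_n$ are not independent. Fix $\beta > \beta_0$ and set $p_n = \IP(A_n)$, so $\sum_n p_n = \8$. For $m < n$, the decomposition \eqref{eq:Sn=Snm} gives $S_n \geq S^n_{m+1}$, where $S^n_{m+1}$ is independent of $S_m$ and satisfies $S^n_{m+1} \eqlaw S_{n-m}$. Since also $\ln_2 n \geq \ln_2(n-m)$, one obtains
\begin{equation*}
\IP(A_m \cap A_n) \leq \IP(A_m)\, \IP(S^n_{m+1} \leq \beta/\ln_2 n) \leq p_m \cdot \IP(S_{n-m} \leq \beta/\ln_2(n-m)) = p_m \, p_{n-m}.
\end{equation*}
Summing this convolution-type bound over $m,n \leq N$ (and adding the diagonal) yields
\begin{equation*}
\sum_{m,n\leq N} \IP(A_m \cap A_n) \leq 2\Big(\sum_{n\leq N} p_n\Big)^2 + \sum_{n \leq N} p_n,
\end{equation*}
so the Kochen--Stone ratio is bounded below by $1/2$ as $N\to \8$. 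Thus $\IP(\limsup_n A_n) \geq 1/2$, and combined with the 0-1 law this forces $\IP(\limsup_n A_n) = 1$. Therefore $S_n \leq \beta/\ln_2 n$ infinitely often a.s., giving $\beta_* \leq \beta$, and letting $\beta \downarrow \beta_0$ concludes.

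The main technical point is securing the pairwise inequality $\IP(A_m \cap A_n) \leq p_m \, p_{n-m}$. The renewal structure provides the independence of $S_m$ and $S^n_{m+1}$, the positivity $S_n \geq S^n_{m+1}$ replaces $A_n$ by an independent event supported on disjoint coordinates, and monotonicity of $\ln_2$ converts the residual probability into the well-formed $p_{n-m}$. This convolution structure is precisely what keeps the Kochen--Stone ratio bounded away from zero and allows the divergence $\sum p_n = \8$ to produce almost-sure infinite recurrence of the events.
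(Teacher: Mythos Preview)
Your proof is correct and follows essentially the same route as the paper: a Kolmogorov tail-triviality argument for the zero--one law, the first Borel--Cantelli lemma for $\beta_*\geq\beta_0$, and Kochen--Stone for $\beta_*\leq\beta_0$ based on the key pairwise bound $\IP(A_m\cap A_n)\leq p_m\,p_{n-m}$ coming from $S_n\geq S^n_{m+1}$, independence, and monotonicity of $\ln_2$. The only cosmetic differences are that the paper organises the zero--one law via the events $E_k=\{\liminf_n S^{n+k}_{k+1}\ln_2 n\leq\beta\}$ rather than directly for the random variable, and obtains a Kochen--Stone lower bound of $1/4$ instead of your slightly cleaner $1/2$.
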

%%%
\begin{proof} For all $\beta < \beta_0$, we have $ \sum_n \IP( S_n \leq \frac{\beta}{\ln_2 n})< \8$ and the first Borel-Cantelli's lemma shows that $ \liminf_n S_n \ln_2 n \geq \beta_0$.
To prove the reverse inequality we proceed by steps:
\medskip

$\bullet$ {\it First step}: 
For any non-increasing sequence $(t_n)_n$,
\begin{equation}\nn
 \sum_{n\geq 1} \mathbb{P}[S_n\leq t_n] = \infty \implies
 \IP(  S_n \leq t_n \, i.o.) \geq \frac14 .
\end{equation}
Indeed, for $1 \leq n \leq m$, 
    \begin{align*}
        \mathbb{P}[S_n \leq t_n , S_m \leq t_m]
        & \leq \mathbb{P}[S_n \leq t_n , S_{n+1}^m \leq t_m]\\
        & = \mathbb{P}[S_n\leq t_n] \times  \mathbb{P}[S_{n+1}^m\leq t_m]\\
        & = \mathbb{P}[S_n\leq t_n]  \times \mathbb{P}[S_{m-n}\leq t_m]\\
        & \leq \mathbb{P}[S_n\leq t_n]  \times  \mathbb{P}[S_{m-n}\leq t_{m-n}],
    \end{align*}
  since $t_m \leq t_{m-n}$. Now, for $k \geq 1$,
    \begin{align*}
        \sum_{1\leq n<m\leq k} \mathbb{P}[S_n \leq t_n , S_m \leq t_m]
        & \leq \sum_{1\leq n<m\leq k} \mathbb{P}[S_n \leq t_n] \times \mathbb{P}[S_{m-n}\leq t_{m-n}]\\
        & \leq \sum_{1\leq n,m\leq k} \mathbb{P}[S_n \leq t_n]  \times \mathbb{P}[S_m\leq t_m].
    \end{align*}
For all $k$ large enough we have $\sum_{n=1}^k \mathbb{P}[S_n \leq t_n] \geq 2$, and then for all $1\leq n\leq k$, 
    $$\sum_{1\leq m\leq k, m\neq n} \mathbb{P}[S_m\leq t_m]\geq 2-\mathbb{P}[S_n\leq t_n]\geq \mathbb{P}[S_n\leq t_n].$$
Therefore,
    \begin{align*}
        \sum_{1\leq n,m\leq k} \mathbb{P}[S_n \leq t_n]  \times  \mathbb{P}[S_m\leq t_m]
        & \leq 2 \sum_{1\leq n, m\leq k, n\neq m}  \mathbb{P}[S_n \leq t_n]  \times  \mathbb{P}[S_m\leq t_m]\\
        & = 4 \sum_{1\leq n< m\leq k}  \mathbb{P}[S_n \leq t_n] \times  \mathbb{P}[S_m\leq t_m]\\
    \end{align*}
Kochen-Stone's theorem \cite{KS64} -- a variant of Borel-Cantelli's lemma -- yields 
    $$\mathbb{P}[S_n\leq t_n \, i.o.]\geq \limsup_{k\geq 1} \frac{\sum_{1\leq n< m\leq k}  \mathbb{P}[S_n \leq t_n]  \times \mathbb{P}[S_m\leq t_m]}{\sum_{1\leq n< m\leq k}  \mathbb{P}[S_n \leq t_n, S_m\leq t_m]}\geq \frac14,$$
which concludes this step.
\medskip

$\bullet$ {\it Second step}: Let's introduce the $\sigma$-fields
\begin{equation}\nn
{\mathcal A}_k= \sigma( (A_n',T_n'); n \geq k), k = 1,2\ldots , \qquad {\mathcal T}= \bigcap_{k\geq 1} {\mathcal A}_k . 
\end{equation}
By Kolmogorov 0--1 law and independence of the sequence $((A_n',T_n'); n \geq 1)$, every element $A$ of the tail field ${\mathcal T}$ has $\IP(A) \in \{0,1\}$.
Fix $\beta \geq 0$ and introduce the events
\begin{equation}\nn
E= \{ \liminf_n S_n \ln_2 n \leq \beta\}, \quad E_k= \{ \liminf_n S_{k+1}^{n+k} \ln_2 n \leq \beta\},
\end{equation}
and
\begin{equation}\nn
\Omega_0=\{ \lim_{n \to \8} \frac{\ln_2 n}{r^{2(U_1+\ldots+U_n)}}=0\}.
\end{equation}
Note that $E=E_0$ and that $\IP(\Omega_0)=1$. Since, by definition, 
$$
S_{k+1}^{n+k+1}=   \frac{T_{k+1}'}{r^{2(U_{k+1}+\ldots+U_{n+k+1})}}       +   S_{k+2}^{n+k+1} \;,
$$
we see that the two sets $E_k$ and $E_{k+1}$ coincide on $\Omega_0$, for all $k \geq 0$. Denoting the common intersection by
\begin{equation}\nn
\widehat E = E \bigcap \Omega_0 = E_k \bigcap \Omega_0 \;,
\end{equation}
we see that $\widehat E$ belongs to ${\mathcal T}$
and then has probability equal to 0 or 1. The similar 0--1 law holds for $E$ which is equal to  $\widehat E$ up to a negligible set.
\medskip

$\bullet$ {\it Final step}: 
For any $\beta >\beta_0$, the series  $\sum_n \IP( S_n \leq t_n)$ with $t_n= \beta/\ln_2 n$ is diverging. By the first step, the probability $\mathbb{P}[S_n\leq t_n \, i.o.] \geq 1/4$, and by the second one is equal to 1. Thus 
$\liminf_n S_n \ln_2 n \leq \beta$ a.s., for all such $\beta$'s. The lemma is proved.
\end{proof}
%%%%%%%%%%%
\begin{rem}\label{rem:6.2}
We have followed the approach of the renewal structure to get the 0--1 law, with the advantage to keep the paper self-contained. 
A tempting alternative would be to show that the tail $\sigma$-field of $R$ is trivial; we mention
 the illuminating  survey \cite{Ro88} on  the tail $\sigma$-field of a diffusion. 
\end{rem}
%%%%%%%
Anticipating on the proof of \eqref{eq:grandesvaleurs2}
we now give a short proof of Theorem \ref{cor:OC}.
\begin{proof} %Since the diffusion coefficient of $R$ is equal to 1, it 
It is not difficult to check the criteria of \cite{FO78} or \cite{Ros79} for triviality of  the tail $\sigma$-field of 
one-dimensional diffusion (see Theorem 3 in \cite{Ro88}).
%
%%%%%% [Details ???]
%
%[VOIR cahier bleu 21/01/21??], 
%\\
Then, 
$K^*=\limsup_{t \to \8} \frac{M(t)}{\sqrt{t \ln_3 t}}$ is a.s. constant. Then, \eqref{eq:grandesvaleurs1} and \eqref{eq:grandesvaleurs2} show that $K^*$ is positive and finite.
\end{proof}

%%%%%%%%%%%%%%%
To continue the proof of \eqref{eq:grandesvaleurs2} we need an intermediate result.
%%%%%%
\begin{lem} \label{lem:mino}
For all $\alpha_0 >0$, there exists $\beta>0$ such that, for all $n$ large enough,
    $$\mathbb{P}\big[S_{\lfloor \alpha_0 \ln_2 n \rfloor}\leq \frac{\beta}{\ln_2 n}\big]\geq \frac{1}{\ln n}.$$
\end{lem}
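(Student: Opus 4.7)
We exhibit an explicit event $E_k$ of probability $\geq 1/\ln n$ on which $S_k \leq \beta/\ln_2 n$. Since
$S_k = \sum_{i=1}^k T_i'/r^{2(U_i + \dots + U_k)}$, the natural strategy is to simultaneously force the $U_i$'s to exceed a threshold $u_0$, so that the denominators decay geometrically, and force each $T_i'$ to be small enough for the corresponding term to contribute the required amount.

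Choose $u_0 \in (0,1)$ so that $|\ln(1-u_0)|=1/(3\alpha_0)$, and define $s_j=\delta\, r^{u_0 j}$ with $\delta=\beta(r^{u_0}-1)/\ln_2 n$. Let
$$E_k = \bigl\{\, U_i \geq u_0,\ T_i' \leq s_{k-i+1}\text{ for every } i = 1,\dots,k\,\bigr\}.$$
On $E_k$, using $r^{2(U_i + \dots + U_k)} \geq r^{2u_0 (k-i+1)}$, one gets
$$S_k \leq \delta \sum_{j=1}^k r^{-u_0 j} \leq \frac{\delta}{r^{u_0}-1} = \frac{\beta}{\ln_2 n},$$
which is the required bound.

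To estimate $\IP(E_k)$ from below, I use independence of the pairs $(U_i,T_i')$ across $i$:
$\IP(E_k) = \prod_{i=1}^k (1-u_0)\cdot \IP[T_i' \leq s_{k-i+1} \mid U_i \geq u_0]$. The $(1-u_0)^k$ factor contributes log-cost $k|\ln(1-u_0)|=\ln_2 n/3$. For the conditional probabilities, I split the indices at $j_1 \sim \ln_3 n/(u_0 \ln r)$, the value where $s_j$ crosses the threshold $t_1$ of Proposition~\ref{prop:LtailT}(ii). For $j \leq j_1$, that proposition gives log-cost $\leq (r-1+\eps)^2/(2 s_j) = (r-1+\eps)^2 r^{-u_0 j}/(2\delta)$, whose geometric sum in $j$ is of order $C(\alpha_0,r)\,\ln_2 n/\beta$, and is bounded by $\ln_2 n/3$ provided $\beta$ is taken large enough (of order $\alpha_0^2$, since $r^{u_0}-1 \sim (\ln r)/(3\alpha_0)$). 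For $j > j_1$, $s_j$ is large, and combining $\IP[T\leq s_j\mid U\geq u_0]\geq 1 - \IP[T>s_j]/(1-u_0)$ with the upper tail estimate of Proposition~\ref{prop:tailT} gives a log-cost of size $\lesssim 1/(u_0 j)$ per index, summing to $O(\ln_3 n)$. Altogether the total log-cost of $E_k$ is $\leq 2\ln_2 n/3+O(\ln_3 n)\leq \ln_2 n$ for $n$ large enough, whence $\IP(E_k)\geq e^{-\ln_2 n}=1/\ln n$.

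The main obstacle is the tension between requiring $u_0$ small --- so that $(1-u_0)^k \geq \exp(-\ln_2 n/3)$ --- and the fact that $u_0$ small forces $r^{u_0}-1 \sim u_0\ln r$ small, hence $\delta$ small and the exponential constraints $\exp(-c/s_j)$ on the $T_i'$ very tight. The geometric schedule $s_j = \delta r^{u_0 j}$ is precisely the one that makes these two effects match: both the upper bound on $S_k$ and the log-cost from Proposition~\ref{prop:LtailT}(ii) reduce to multiples of the same geometric series $\sum_{j\geq 1} r^{-u_0 j} = 1/(r^{u_0}-1)$, and the whole calibration collapses to choosing two scalars $u_0,\beta$ depending on $\alpha_0$ and $r$.
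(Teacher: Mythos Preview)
Your argument is correct and follows essentially the same route as the paper: force all $U_i\geq u_0$ and impose a geometric schedule $T_i'\leq \delta\,r^{u_0(k-i+1)}$, factor by independence, then split the product according to whether the threshold lies below or above the range of Proposition~\ref{prop:LtailT}(ii). The only cosmetic difference is that the paper first reduces the claim to the scale-free statement $\IP[S_n\leq u/n]\geq e^{-vn}$ (their (6.7)) before running the identical construction, and in the large-$j$ regime it uses the cruder bound $\IP[T\leq t_1\mid U\geq b]^n$ with $t_1$ taken large, rather than your sharper use of Proposition~\ref{prop:tailT}.
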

\begin{proof}
Clearly, it suffices to prove that for $v>0$, there exists $u>0$ such that, for all large $n$ we have, 
 \begin{equation} \label{eq:lem:mino}
   \mathbb{P}[S_n\leq \frac{u}{n}]\geq \frac{1}{e^{vn}} .
\end{equation}
Indeed, substituting $v, n$ in \eqref{eq:lem:mino} by $\alpha_0^{-1}, \lfloor  \alpha_0 \ln_2 n \rfloor$ shows that any $\beta>u/\alpha_0$ fulfills the statement of the lemma.

To show \eqref{eq:lem:mino}, we fix  some $b \in (0,1)$ ($b$ will be chosen small later on), and we note that:
 \begin{quotation}
 $U_i\geq b$ and $T_i'\leq \frac{u}{n} (r^b-1)r^{b(n-i+1)}$ for all $i=1,\ldots, n$ 
\end{quotation}   
imply that   
    $$S_n=\sum_{i=1}^n \frac{T_i'}{r^{2(U_{i}+\dots+U_n)}}\leq \sum_{i=1}^n \frac{\frac{u}{n} (r^b-1) r^{b(n-i+1)}}{r^{2b(n-i+1)}}\leq \frac{u}{n}.$$
  Then,
    \begin{align}
        \mathbb{P}[S_n\leq \frac{u}{n}] \nn
        & \geq \prod_{i=1}^n \mathbb{P}[U_i\geq b, T_i'\leq  \frac{u}{n} (r^b-1)r^{b(n-i+1)}]\\
        & = (1-b)^n \prod_{i=1}^n \mathbb{P}[ T_i'\leq  \frac{u}{n} (r^b-1)r^{b(n-i+1)}|U_i\geq b] \nn \\
               & = (1-b)^n \prod_{i=1}^n \mathbb{P}[ T\leq  \frac{u}{n} (r^b-1)r^{bi}|U\geq b] \label{eq:minprod}
    \end{align}
  By Proposition \ref{prop:LtailT}, we can find  $t_0>0$ and $\rho>0$ such that, for  $t\leq t_0$,
    $$\mathbb{P}[T \leq t | U\geq b] \geq \exp(-\frac{\rho}{t}).$$
    Now, we fix some $t_1>t_0$, we will bound the factors in \eqref{eq:minprod} as follows:\\
For  $\frac{\ln(t_1 \frac{n}{u(r^b-1)})}{b \ln r}\leq i\leq n$ :
    $$\mathbb{P}[ T\leq  \frac{u}{n} (r^b-1)r^{bi}|U\geq b]\geq \mathbb{P}[ T\leq  t_1|U\geq b],$$
for  $\frac{\ln(t_0 \frac{n}{u(r^b-1)})}{b \ln r}\leq i \leq \frac{\ln(t_1 \frac{n}{u(r^b-1)})}{b \ln r}$ :
    $$\mathbb{P}[ T\leq  \frac{u}{n} (r^b-1)r^{bi}|U\geq b]\geq \mathbb{P}[ T\leq  t_0|U\geq b],$$
and for $1\leq i \leq \frac{\ln(t_0 \frac{n}{u(r^b-1)})}{b \ln r}$ :
    $$\mathbb{P}[ T\leq  \frac{u}{n} (r^b-1)r^{bi}|U\geq b]\geq \exp\left(-\rho \frac{n}{u(r^b-1)}\frac{1}{r^{bi}}\right).$$
With this choice, the estimate  \eqref{eq:minprod}  becomes
    \begin{align*}
        \mathbb{P}[S_n\leq \frac{u}{n}]
%        & \geq \prod_{i=1}^n \mathbb{P}[U_i\geq b, T_i'\leq  \frac{u}{n} (r^b-1)r^{bi}]\\
        & \geq (1-b)^n \times \mathbb{P}[ T\leq  t_1|U\geq b]^n  \times \mathbb{P}[ T\leq  t_0|U\geq b]^{\frac{\ln(\frac{t_1}{t_0})}{b \ln r}+1}\\
        & \times \prod_{i=1}^{\lfloor \frac{\ln(t_0 \frac{n}{u(r^b-1)})}{b \ln r}\rfloor}  \exp\left(-\rho \frac{n}{u(r^b-1)}\frac{1}{r^{bi}}\right)\\
        & \geq (1-b)^n \times \mathbb{P}[ T\leq  t_1|U\geq b]^n  \times \mathbb{P}[ T\leq  t_0|U\geq b]^{\frac{\ln(\frac{t_1}{t_0})}{b \ln r}+1}\\
        & \times \exp\left(-\rho \frac{n}{u(r^b-1)^2}\right)
    \end{align*}
 From this we derive the claim \eqref{eq:lem:mino} by taking $b$ small, $u$ and $t_1$ large. This ends the proof of the lemma.
\end{proof}
%%%%%%%%%%%%%%%%%
\begin{proof} {\it Theorem \ref{th:grandesvaleurs}, claim \eqref{eq:grandesvaleurs2}.}
Similarly to the proof of \eqref{eq:grandesvaleurs1}, we let  $t_n=\frac{\beta}{\ln_2 n} \wedge 1$, $(i^{(n)})_{n\geq1}$ be a sequence of integers, and $(b_i^{(n)})_{i=i^{(n)}+1,\dots n, n\geq 1}$ be a doubly-indexed sequence  with $0 < b_i^{(n)} < 1,$ given by 
$$b_i^{(n)}=1-\sqrt{\frac{8}{i}(\ln i+\ln_2 n)}\;, \quad {\rm for} \; i^{(n)}+1 \leq i \leq n, \;  i^{(n)}=\lfloor \alpha_0 \ln_2 n \rfloor$$ %and $ i^{(n)}=\lfloor \alpha_0 \ln_2 n \rfloor$ 
with $\alpha_0$ large (take $\alpha_0>8$ so that $b_i^{(n)}>0$ for $n$ large). 

This time, we need an extra doubly-indexed, positive  sequence $(s_i^{(n)})_{i=i^{(n)}+1,\dots,n, n\geq 1}$ such that for $n$ large
$$\sum_{i=i^{(n)}+1}^n s_i^{(n)} \leq t_n\;.$$
(Note that this implies $s_i^{(n)}\leq 1$.)
Similarly, using \eqref{eq:Sn=Snm} we estimate
\begin{align*}
    \mathbb{P}[S_n\leq t_n]
    & \geq \mathbb{P}\left[\frac{T_1'}{r^{2(U_1+\dots+U_n)}}\leq s_n^{(n)}, S_2^n\leq t_n-s_n^{(n)}\right]\\
    & \geq \mathbb{P}\left[\frac{T_1'}{r^{2(U_1+\dots+U_n)}}\leq s_n^{(n)}, S_2^n\leq t_n-s_n^{(n)}, 2(U_1+\dots+U_n)\geq b_n^{(n)}.n\right]\\
    & \geq \mathbb{P}\left[T_1'\leq s_n^{(n)} r^{b_n^{(n)}.n}, S_2^n\leq t_n-s_n^{(n)}, 2(U_1+\dots+U_n)\geq b_n^{(n)}.n\right]\\
    & \geq \mathbb{P}\left[T_1'\leq s_n^{(n)} r^{b_n^{(n)}.n}, S_2^n\leq t_n-s_n^{(n)}\right] - \mathbb{P}[2(U_1+\dots+U_n)<b_n^{(n)}.n]\\
    &  \geq \mathbb{P}\left[T\leq s_n^{(n)} r^{b_n^{(n)}.n}\right] \times \mathbb{P}\left[ S_{n-1}\leq t_n-s_n^{(n)}\right] - \mathbb{P}[2(U_1+\dots+U_n)<b_n^{(n)}.n].
\end{align*}
%%%%%%%
We iterate the procedure,
\begin{align*}
    \mathbb{P}[S_{n-1}\leq t_n-s_n^{(n)}] 
    & \geq \mathbb{P}\left[T\leq s_{n-1}^{(n)}.r^{b_{n-1}^{(n)}.(n-1)}\right] \times \mathbb{P}\left[ S_{n-2}\leq t_n-s_n^{(n)}-s_{n-1}^{(n)}\right]  \\
    & - \mathbb{P}[2(U_1+\dots+U_{n-1})<b_{n-1}^{(n)}.(n-1)],
\nn \end{align*}
and so on down to $i^{(n)}$. We obtain
%%%%%%
%
%
%
%
%
%%%%%%%%%%%%%%%%%%%
%\begin{equation} 
    \begin{align}  \nn
        \mathbb{P}[S_n\leq t_n]
        & \geq \left(\prod_{i=i^{(n)}+1}^n \mathbb{P}\left[T\leq s_i^{(n)} r^{b_i^{(n)}.i}\right] \right) \times \mathbb{P}\left[ S_{i^{(n)}}\leq t_n-\sum_{i=i^{(n)}+1}^n s_i^{(n)}\right]   \\ \label{eqLiterstop} 
        & - \sum_{i=i^{(n)}\!+\!1}^n \!\left( \prod_{j=i+1}^n \mathbb{P}\left[T\leq s_j^{(n)} r^{b_j^{(n)}.j}\right]\right)\!\times \mathbb{P}[2(U_1\!+\!\dots\!+\!U_i)\!<\!b_i^{(n)}.i].
    \end{align}
%\end{equation}

Using $s_i^{(n)}\leq 1$ and $b_i^{(n)}<1$, we have, for $n$ large and $i^{(n)}+1\leq i\leq n$ :
    \begin{align*} 
        \prod_{j=i+1}^n \mathbb{P}[T\leq s_j^{(n)} r^{b_j^{(n)}.j}]
        & \leq \prod_{j=i+1}^n \mathbb{P}[T\leq r^j]\\
        & \leq \exp\left(- \sum_{j=i+1}^n \mathbb{P}[T\geq r^j]\right)\\
        & \leq \exp\left(- \sum_{j=i+1}^n \left(\frac{1}{j} -\frac{\ln_2(j \ln r)+C}{j^2 \ln r}\right)\right) \qquad  \qquad (by\;  \eqref{Queueminp})  \\
        & \leq D' \frac{i}{n},
    \end{align*}
for some positive constant $D'$.

As we did for the series $\sum_n a_n$, cf. below \eqref{eq:D'} except for using \eqref{eq:hoeffdingl} instead of \eqref{eq:hoeffdingg}, we easily see that the series $\sum_n a_n'$, with
$$a'_n=\sum_{i=i^{(n)}+1}^n \!\left( \prod_{j=i+1}^n \mathbb{P}[T\leq s_j^{(n)} r^{b_j^{(n)}.j}]\right)\times \mathbb{P}[2(U_1\!+\!\dots\!+\!U_i)<b_i^{(n)}.i],$$
is finite.  Now, we choose 
$$s_i^{(n)}=\frac{1}{i^3}\;,$$
and we start to bound from below  the product
\begin{align*}
    \prod_{i=i^{(n)}+1}^n \mathbb{P}[T\leq s_i^{(n)} r^{b_i^{(n)}.i}] 
    & = \exp\left(\sum_{i=i^{(n)}+1}^n \ln (1 - \mathbb{P}[ T\geq s_i^{(n)} r^{b_i^{(n)}i}])\right)
\end{align*}
Observe that, by taking $\alpha_0>16$, we have $b_i^{(n)} \in (1/2, 1)$ for all  large $n$ and $i \in [i^{(n)}+1,n]$, and also that
 \begin{equation} \label{eq:crucial} 
% \inf\{ s_i^{(n)} r^{b_i^{(n)}.i} ;  i^{(n)}\leq i\leq n \} \to \infty \quad {\rm as\ } n \to \8.
 \inf\{ s_i^{(n)} r^{b_i^{(n)}.i} ;  i^{(n)}\leq i\leq n \} \geq r^{\frac {\alpha_0}{2} \ln_2 n} \qquad {\rm for \ large\ } n ,
\end{equation}
which tends to $\8$ as $n \to \8$.
%%%%%%%%%%%%%%%%%%%%%   
For $i^{(n)}+1\leq i\leq n$ and $n$ large,  in view of \eqref{eq:crucial} we have (using $-\ln(1-u)\leq u+u^2$  for small $u>0$ and $\frac{1}{1-u}\leq 1+2u$ for $0<u<\frac12$)
\begin{align*}
    -\ln (1 - \mathbb{P}[ T\geq s_i^{(n)} r^{b_i^{(n)}.i}]) 
    &\leq \mathbb{P}[ T\geq s_i^{(n)} r^{b_i^{(n)}.i}]  + \eps'_{n,i,1}\\
    & \leq \frac{\ln r}{\ln\left(s_i^{(n)}  r^{b_i^{(n)}.i}\right)}    + \eps'_{n,i,2}  \qquad ({\rm by\ } \eqref{Queueminp})\\
    & = \frac{1}{b_i^{(n)}.i +\frac{\ln s_i^{(n)}}{\ln r}} + \eps'_{n,i,2}   \\
    & \leq \frac{1}{b_i^{(n)}.i}  + \eps'_{n,i,3}  \\
    & \leq \frac{1}{i} + \eps'_{n,i,4} \;,
\end{align*}
with error terms 
$$ 
\eps'_{n,i,1}=\mathbb{P}\left[ T\geq s_i^{(n)} r^{b_i^{(n)}.i}\right]^2, \quad  \eps'_{n,i,2}= \eps'_{n,i,1}+
 \frac1{\ln r}\times \frac{\ln_3 \left(   s_i^{(n)}  r^{b_i^{(n)}.i} \right)    +C} {\big( b_i^{(n)}.i +\frac{\ln s_i^{(n)}}{\ln r}\big)^2},
$$
$$
\eps'_{n,i,3}=\eps'_{n,i,2} - 2 \frac{\ln s_i^{(n)}}{\left(b_i^{(n)}. i\right)^2\ln r},\quad 
\eps'_{n,i,4}= \eps'_{n,i,3}  + 2 \sqrt{\frac{8}{i^3}(\ln i+\ln_2 n)}.
$$

One can check that  $\sup_n \sum_{i=i^{(n)+1}}^n \eps'_{n,i,4}< \8$, so for some positive constant $D''$, for large $n$,
%\begin{equation} 
    \begin{align}  \nn
        \prod_{i=i^{(n)}+1}^n \mathbb{P}[T\leq s_i^{(n)} r^{b_i^{(n)}.i}] 
        & \geq \exp\left(-\sum_{i=i^{(n)}+1}^n \left(
        \frac{1}{i} + \eps'_{n,i,4} \right)\right)\\  \label{eqhalali}
        & \geq  D'' \frac{i^{(n)}}{n}. 
    \end{align}
%\end{equation}

Finally, consider  the term
$$ \mathbb{P}\left[ S_{i^{(n)}}\leq t_n-\sum_{i=i^{(n)}+1}^n s_i^{(n)}\right].$$
Note that $t_n-\sum_{i=i^{(n)}+1}^n s_i^{(n)}=\frac{\beta}{\ln_2 n} - \sum_{i=i^{(n)}+1}^n \frac{1}{i^3}\geq \frac{\beta}{\ln_2 n} -  \frac{1}{2 \:{i^{(n)}}^2}$,
which implies that for all  $\beta'<\beta$, $t_n-\sum_{i=i^{(n)}+1}^n s_i^{(n)}\geq \frac{\beta'}{\ln_2 n}$ for large $n$, and then
$$ \mathbb{P}\left[ S_{i^{(n)}}\leq t_n-\sum_{i=i^{(n)}+1}^n s_i^{(n)}\right] \geq  \mathbb{P}\left[ S_{i^{(n)}}\leq  \frac{\beta'}{\ln_2 n}\right].$$
\medskip

Now, we are ready to conclude the proof: Fix  $\alpha_0>16$, and let $\beta'$ be associated to $\alpha_0$ by Lemma  \ref{lem:mino}. Then,
$$ \mathbb{P}\left[ S_{i^{(n)}}\leq  \frac{\beta'}{\ln_2 n}\right]\geq \frac{1}{\ln n},$$
and for $t_n=(\beta/\ln_2 n) \wedge 1$ with $\beta>\beta'$, using \eqref{eqhalali},
$$\left( \prod_{i=i^{(n)}+1}^n \mathbb{P}[T\leq s_i^{(n)} r^{b_i^{(n)}.i}] \right) \times \mathbb{P}\left[ S_{i^{(n)}}\leq t_n-\sum_{i=i^{(n)}+1}^n s_i^{(n)}\right] \geq  D'' \frac{i^{(n)}}{n} \times  \frac{1}{\ln n}.$$
Using now \eqref{eqLiterstop} and $\sum_n a_n'<\8$ we obtain
$\sum_{n\geq 1} \mathbb{P}[S_n\leq t_n] = \infty.$ By Lemma \ref{lem:beta0} 
we have a.s.,
$$T_n\leq \frac{\beta A_n^2}{ \ln_2 n} \quad i.o.$$
i.e.,  $A_n\geq \sqrt{\beta^{-1}T_n  \ln_2 n}$. Since, for all large $n$, $\frac{\beta A_n^2}{\ln_2 n} \leq r_+^n$, we see that $T_n\leq r_+^n$, so $n \geq \frac{\ln T_n}{\ln r_+},$ and also
$$M_{T_n}=A_n\geq \sqrt{\beta^{-1} T_n \ln_2 \left(\frac{\ln T_n}{\ln r_+}\right)}.$$
Finally, for some (small) $K'>0$, with probability one, $M_t \geq  K'  \sqrt{t \ln_3 t}$ i.o. The proof of \eqref{eq:grandesvaleurs2} is complete.
\end{proof}

%%%%%%%%%%%%
%
%
%      valeur de $K'$
%
%
%%%%%%%%%%%%
%\begin{rem}
%Précisions sur la valeur de $K'$.
%
%Soit $\beta_*=\inf \left\{u\times v | u>0, v\in (0,1/2), \min_{b\in (0,1)}  -\ln(1-b)+\frac{1}{2ub^2}<v\right\} \approx 18.5081$ (à vérifier). Alors on peut établir (9) pour tout $K'<\sqrt{\beta_*^{-1}}\approx 0,232$.
%
%En effet, nous procédons de la manière suivante. 
%Soient $u>0$, $v\in(0,\frac12)$ et $b\in (0,1)$ tels que :
%$$ -\ln(1-b)+\frac{1}{2ub^2}<v.$$
%
%Il existe alors $r>1$ tel que :
%$$ -\ln(1-b)+\frac{(r-1)^2}{2u(r^b-1)^2}<v.$$
%
%La proposition (3.6) peut être établie pour tout $\rho>\frac{(r-1)^2}{2}$ [A VERIFIER !!!], donc en prenant un tel $\rho$ satisfaisant :
%$$ -\ln(1-b)+\frac{\rho}{u(r^b-1)^2}<v,$$
%nous obtenons la formule (34) de la preuve du lemme 6.3.
%
%Maintenant, posons $\alpha_0=v^{-1}>2$. Nous obtenons alors le lemme 6.3 pour tout $\beta > u/\alpha_0=u v$.  
%
%Ensuite, la preuve de (9) s'adapte sans difficulté avec :
%$$b_i^{(n)}=1-\sqrt{\frac{8\ln i+2 \ln_2 n }{i}}\;, \quad {\rm for} \; i^{(n)} \leq i \leq n, \;  i^{(n)}=\lfloor \alpha_0 \ln_2 n \rfloor$$
%
%Finalement, nous obtenons (9) pour tout $K'<\sqrt{\beta^{-1}}$. En optimisant sur $u$, $v$ et $b$, nous obtenons les valeurs annoncées de $K'$.
%\end{rem}
%%%%%%%%

\bigskip

{\bf Acknowledgments}: FC is partially supported by ANR SWIWS.

%%%%%%%%%%
%
%
%
%%%%%%%%%%%%%%%%%%%%%%
%   \break

\end{document}